\documentclass[12pt,british,english]{article}
\usepackage[T1]{fontenc}
\usepackage[latin9]{inputenc}
\usepackage{geometry}
\geometry{verbose,tmargin=1in,bmargin=1in,lmargin=1in,rmargin=1in}
\usepackage{babel}
\usepackage{refstyle}
\usepackage{amsmath}
\usepackage{amsthm}
\usepackage{amssymb}
\usepackage{stmaryrd}
\usepackage{graphicx}
\usepackage[unicode=true,pdfusetitle,
 bookmarks=true,bookmarksnumbered=false,bookmarksopen=false,
 breaklinks=false,pdfborder={0 0 1},backref=false,colorlinks=false]
 {hyperref}

\makeatletter


\AtBeginDocument{\providecommand\Thmref[1]{\ref{Thm:#1}}}
\AtBeginDocument{\providecommand\secref[1]{\ref{sec:#1}}}
\AtBeginDocument{\providecommand\thmref[1]{\ref{thm:#1}}}
\AtBeginDocument{\providecommand\Secref[1]{\ref{Sec:#1}}}
\AtBeginDocument{\providecommand\lemref[1]{\ref{lem:#1}}}
\AtBeginDocument{\providecommand\propref[1]{\ref{prop:#1}}}
\AtBeginDocument{\providecommand\defref[1]{\ref{def:#1}}}
\AtBeginDocument{\providecommand\corref[1]{\ref{cor:#1}}}
\AtBeginDocument{\providecommand\factref[1]{\ref{fact:#1}}}
\AtBeginDocument{\providecommand\subsecref[1]{\ref{subsec:#1}}}
\AtBeginDocument{\providecommand\figref[1]{\ref{fig:#1}}}

\RS@ifundefined{subsecref}
  {\newref{subsec}{name = \RSsectxt}}
  {}
\RS@ifundefined{thmref}
  {\def\RSthmtxt{theorem~}\newref{thm}{name = \RSthmtxt}}
  {}
\RS@ifundefined{lemref}
  {\def\RSlemtxt{lemma~}\newref{lem}{name = \RSlemtxt}}
  {}

\theoremstyle{plain}
\newtheorem{thm}{\protect\theoremname}[section]
\theoremstyle{definition}
\newtheorem{defn}[thm]{\protect\definitionname}
\theoremstyle{remark}
\newtheorem{rem}[thm]{\protect\remarkname}
\theoremstyle{plain}
\newtheorem{prop}[thm]{\protect\propositionname}
\theoremstyle{plain}
\newtheorem{fact}[thm]{\protect\factname}
\theoremstyle{plain}
\newtheorem{lem}[thm]{\protect\lemmaname}
\theoremstyle{plain}
\newtheorem{cor}[thm]{\protect\corollaryname}
\theoremstyle{remark}
\newtheorem*{rem*}{\protect\remarkname}
\theoremstyle{definition}
\newtheorem*{defn*}{\protect\definitionname}
\theoremstyle{remark}
\newtheorem*{claim*}{\protect\claimname}
\theoremstyle{definition}
\newtheorem{problem}[thm]{\protect\problemname}

\usepackage{indentfirst}
\usepackage{algorithm}
\makeatletter
\renewcommand{\ALG@name}{Code Snippet}
\makeatother

\usepackage{xcolor}
\definecolor{keyword}{HTML}{BA2CA3}
\definecolor{string}{HTML}{D12F1B}
\definecolor{comment}{HTML}{008400}
\definecolor{identifier}{HTML}{0B4F79}

\newref{def}{refcmd={\hyperref[#1]{Definition \ref{#1}}}}
\newref{prop}{refcmd={\hyperref[#1]{Proposition \ref{#1}}}}
\newref{cor}{refcmd={\hyperref[#1]{Corollary \ref{#1}}}}
\newref{lem}{refcmd={\hyperref[#1]{Lemma \ref{#1}}}}
\newref{fig}{refcmd={\hyperref[#1]{Figure \ref{#1}}}}
\newref{def}{refcmd={\hyperref[#1]{Definition \ref{#1}}}}

\makeatother

\addto\captionsbritish{\renewcommand{\claimname}{Claim}}
\addto\captionsbritish{\renewcommand{\corollaryname}{Corollary}}
\addto\captionsbritish{\renewcommand{\definitionname}{Definition}}
\addto\captionsbritish{\renewcommand{\factname}{Fact}}
\addto\captionsbritish{\renewcommand{\lemmaname}{Lemma}}
\addto\captionsbritish{\renewcommand{\problemname}{Problem}}
\addto\captionsbritish{\renewcommand{\propositionname}{Proposition}}
\addto\captionsbritish{\renewcommand{\remarkname}{Remark}}
\addto\captionsbritish{\renewcommand{\theoremname}{Theorem}}
\addto\captionsenglish{\renewcommand{\claimname}{Claim}}
\addto\captionsenglish{\renewcommand{\corollaryname}{Corollary}}
\addto\captionsenglish{\renewcommand{\definitionname}{Definition}}
\addto\captionsenglish{\renewcommand{\factname}{Fact}}
\addto\captionsenglish{\renewcommand{\lemmaname}{Lemma}}
\addto\captionsenglish{\renewcommand{\problemname}{Problem}}
\addto\captionsenglish{\renewcommand{\propositionname}{Proposition}}
\addto\captionsenglish{\renewcommand{\remarkname}{Remark}}
\addto\captionsenglish{\renewcommand{\theoremname}{Theorem}}
\providecommand{\claimname}{Claim}
\providecommand{\corollaryname}{Corollary}
\providecommand{\definitionname}{Definition}
\providecommand{\factname}{Fact}
\providecommand{\lemmaname}{Lemma}
\providecommand{\problemname}{Problem}
\providecommand{\propositionname}{Proposition}
\providecommand{\remarkname}{Remark}
\providecommand{\theoremname}{Theorem}

\begin{document}
\title{Graphs with queue number three and unbounded stack number}
\author{Yui Hin Arvin Leung\thanks{Department of Pure Mathematics and Mathematical Statistics, University
of Cambridge. Email: yhal2@cam.ac.uk}}
\maketitle
\begin{abstract}
We prove that the graphs $T\boxslash P$ have unbounded stack number
and queue number $3$, where $T$ is a tree and $P$ is a path, and
$\boxslash$ denotes the graph strong product but with one of the
directions removed. The previous best known results is that graphs
with queue number $4$ can have unbounded stack number.

\tableofcontents{}
\end{abstract}

\section{Introduction}

Firstly, we shall define the stack and queue number of a graph. 
\begin{defn}
Let $G$ be a graph, and $L$ be a linear order on $V(G)$. We say
edges $e_{1}$ and $e_{2}$ are \emph{separated} (resp. \emph{nest},
\emph{cross}) with respect to $L$ if:
\begin{itemize}
\item Separated, if $a<b<c<d$ or $c<d<a<b$
\item Nest, if $a<c<d<b$ or $c<a<b<d$ 
\item Cross, if $a<c<b<d$ or $c<a<d<b$ 
\end{itemize}
after uniquely writing $e_{1}=(a,b)$ , $e_{2}=(c,d)$ , $a<b$ and
$c<d$. All comparisons are with respect to $L$. If $e_{1},e_{2}$
share any endpoints, they do not form a separated, a nesting, nor
a crossing pair.

\begin{figure}[h]
\begin{centering}
\includegraphics{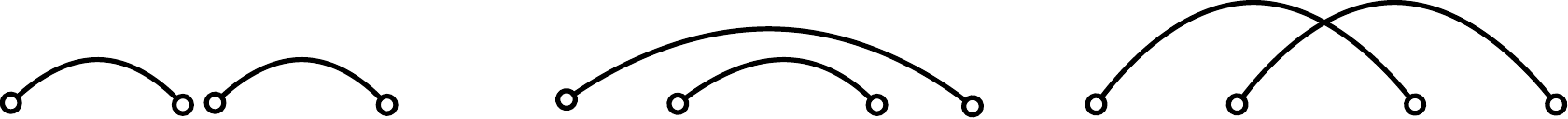}
\par\end{centering}
\caption{From left to right: Disjoint, Nest and Cross}
\end{figure}
\end{defn}

\begin{defn}
Let $G$ be a graph. A \emph{$k$-queue layout} $(L,\phi)$ consists
of a total order $L$ on $V(G)$, and $\phi$ a $k$- colouring of
$E(G)$, such that any two edges of the same colour do not nest with
respect to $L$.

A \emph{$k$-stack layout} $(L,\phi)$ consists of a total order $L$
on $V(G)$, and $\phi$ a $k$-colouring of $E(G)$, such that any
two edges of the same colour do not cross with respect to $L$.

The \emph{queue number} of $G$ is the smallest integer $k$ such
that $G$ admits a $k$-queue layout. The \emph{stack number} of $G$
is the smallest integer $k$ such that $G$ admits a $k$-stack layout. 
\end{defn}

Two natural questions have been asked regarding stack and queue numbers
\begin{itemize}
\item Is stack-number bounded by queue-number?
\item Is queue-number bounded by stack-number?
\end{itemize}
In \cite{key-first-4}, MLA Dujmovi\'{c}, Vida, et al. showed the
graphs $S\square(P\boxslash P)$ have unbounded stack number with
queue number $4$, where $S$ is a star and $P$ are paths. This resolved
the first question. In \cite{three-products}, Eppstein, David, et
al. showed that the graphs $P\boxslash P\boxslash P$ also have a
queue number of $4$ with unbounded stack number, giving another example.
The $\boxslash$ denotes the following graph product variant:
\begin{defn}
Let $G,H$ be two undirected graph. Let $G',H'$ be orientations of
$G,H$. The graph $G\boxslash H$ is defined to be the undirected
graph with vertex set $\{(g,h)|g\in V(G),h\in V(H)\}$, and three
types of edges 
\begin{itemize}
\item $(g,h)\sim(g',h)$ for $(g\sim g')\in G$, $h\in H$
\item $(g',h)\sim(g,h)$ for $(h\sim h')\in H,g\in G$.
\item $(g,h)\sim(g',h')$ for $g\rightarrow g'\in G'$, $h\rightarrow h'\in H'$ 
\end{itemize}
\end{defn}

Our key contribution is the following:
\begin{thm}
\label{thm:main-result-statement}For every $s\in\mathbb{N}$, there
exist a graph $G$ with queue number at most $3$ , and stack number
at least $s$. 
\end{thm}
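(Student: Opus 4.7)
The plan is to exhibit, for every $s$, a graph of the form $G = T \boxslash P$ with $T$ a sufficiently deep rooted tree and $P$ a sufficiently long path, and to establish the two bounds separately: that $T \boxslash P$ always admits a $3$-queue layout, and that for $T$ and $P$ chosen large enough in terms of $s$, every stack layout of $T \boxslash P$ requires at least $s$ stacks.

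For the queue-number upper bound I would orient $T$ from its root outward and $P$ from one endpoint, then order $V(T \boxslash P)$ lexicographically: first by position along $P$, and within each $P$-slice by a BFS order on $T$ refined so that siblings appear consecutively and in the same relative order as their parents. The three edge types of $\boxslash$ then match three candidate queues -- tree edges inside a single $P$-slice, the matching $P$-edges between consecutive slices at a fixed vertex of $T$, and the diagonal edges from a parent in slice $i$ to its children in slice $i+1$. Using the standard one-queue BFS layout of trees and the trivial one-queue layout of paths, a direct case analysis on whether the two edges lie in the same, adjacent or non-adjacent $P$-slices shows that no two same-class edges nest, giving the desired $3$-queue layout.

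For the stack-number lower bound the key structural observation is that every root-to-leaf path $\pi$ of $T$ induces a subgraph of $T \boxslash P$ containing an oriented copy of $P \boxslash P$, and distinct such paths overlap only at their common ancestors. Suppose for contradiction that $T \boxslash P$ admits an $s$-stack layout. At each branching vertex $v$ of $T$ one applies a pigeonhole/Ramsey step to the stack colours of the diagonal edges leaving $\{(v, p_i) : i \in [n]\}$ and descends into a subtree on which the colour pattern is consistent. Iterating down the depth of $T$ isolates a chain of nested branches whose diagonals, combined with a long enough $P$, force more than $s$ pairwise crossing same-colour edges in a single stack, producing the required contradiction.

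The principal obstacle is this lower bound. Unlike the star-based construction of \cite{key-first-4}, in which the copies of $P \boxslash P$ hanging off the star centre are essentially disjoint, the copies of $P \boxslash P$ inside $T \boxslash P$ are entangled along shared ancestral paths, so crossings forced in one copy may be absorbed by stacks that are reused across copies. I would expect to overcome this by a Ramsey-style selection that, at each depth, locates a subtree whose induced stack pattern mimics that of an independent copy of $P \boxslash P$, allowing the known unbounded-stack-number results for $P \boxslash P$-like graphs \cite{three-products,key-first-4} to be lifted to $T \boxslash P$ once the depth of $T$ is large compared with $s$.
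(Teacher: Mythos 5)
Your queue-number half is fine and essentially matches the paper: order the vertices by path position first, then by tree depth/lexicographic position within a slice, and give each of the three edge types its own queue. The gap is in the stack-number lower bound, which is the entire content of the theorem. Your plan rests on a false premise: a root-to-leaf path of $T$ does induce a copy of $P\boxslash P$ inside $T\boxslash P$, but $P\boxslash P$ is a planar graph and has stack number at most $4$, so there is no ``known unbounded-stack-number result for $P\boxslash P$-like graphs'' to lift. The unboundedness in the earlier works comes from genuinely three-dimensional objects, $S\square(P\boxslash P)$ and $P\boxslash P\boxslash P$, and neither of these sits inside $T\boxslash P$ along a single branch; the whole difficulty is that $T\boxslash P$ has only one path direction, and the ``many copies'' must come from the width of the tree interacting across all depths simultaneously, not from one branch at a time.

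Beyond that, the mechanism you describe --- pigeonhole on diagonal-edge colours at each branching vertex, then ``force more than $s$ pairwise crossing same-colour edges'' --- asserts the conclusion without supplying the step that produces crossings. Colour- and order-consistency passes (the analogues of Proposition \ref{prop:pass-colour} and Proposition \ref{prop:pass-order}) are obtainable by exactly the Ramsey argument you sketch, but they alone yield no contradiction: the paper additionally needs the lex-monotone property obtained from the multidimensional Erd\H{o}s--Szekeres theorem, the fact that the lex order respects tree depth (Lemma \ref{lem:identity-permutation}, which already requires the interleaving machinery of Lemma \ref{lem:ultra-inter}), the consistency of the direction function $Z(i,j,p)$ (Lemma \ref{lem:z-consistent}), and then a hexagonal-grid argument with chromatic boundaries and critical points (Lemmas \ref{lem:long-line-bad} and \ref{lem:top-or-long}) because contradictions can only be extracted from configurations involving a fan (Propositions \ref{prop:fan-fan} and \ref{prop:fan-rainbow}), and fans only arise at the top tree layer. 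None of this is routine, and your proposal contains no substitute for it; as written, the lower-bound argument would not go through.
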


We will take the graph $G$ to be of form $T\boxslash P$, where $T$
is a tree, $P$ is a path. Write $P$ as the path $1-2-...-m$. In
defining $T\boxslash P$, we take $T$ to be directed towards the
root, and $P$ to be directed away from $1$. In other words, the
graph $T\boxslash P$ are defined as follows:
\begin{defn}
Let $T$ be a rooted tree. Let $P$ be a path of length $m$ with
vertex set $\{1,2,...,m\}$. The graph $T\boxslash P$ are defined
as $V(T\boxslash P)=\{(v,i)|v\in V(T),i\in V(P)\}$, with three types
of edges:
\begin{itemize}
\item (Vertical edges) $(v,i)\sim(p,i)$ for where $p$ is a parent of $v$in
$T$ 
\item (Diagonal edges) $(v,i)\sim(p,i+1)$ for where $p$ is a parent of
$v$ in $T$ 
\item (Horizontal edges) $(v,i)\sim(v,i+1)$ for $i\in\{1,2,\cdots,m-1\}$
\end{itemize}
\end{defn}

This paper shows:
\begin{thm}
For every $s\in\mathbb{N}$, there exist a tree $T$ and path $P$
such that $T\boxslash P$ have stack number exceeding $s$.
\end{thm}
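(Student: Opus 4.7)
The plan is to take $T$ to be a sufficiently large tree and $P$ to be a sufficiently long path, with both parameters growing in $s$, and argue by contradiction: suppose $T\boxslash P$ admits an $s$-stack layout $(L,\phi)$. The strategy is a hierarchy of Ramsey-style reductions that normalise the layout into a rigid combinatorial form, from which I extract $s+1$ pairwise crossing edges, contradicting the stack property by pigeonhole on colours.

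First, I would normalise each column. For each $v\in V(T)$, write $C_v=\{(v,i):i\in V(P)\}$; the horizontal edges make $C_v$ into a path, but $L$ reorders it according to some permutation $\sigma_v$ of $V(P)$. By Erd\H{o}s--Szekeres, if $P$ is long enough then every $\sigma_v$ has a monotone subsequence of any prescribed length; combined with pigeonhole over $V(T)$, I can pass to a large subtree $T_1\subseteq T$ and a common index set $I\subseteq V(P)$ on which every $\sigma_v$ is monotone in the same direction. After this reduction, each column is linearly arranged in a predictable way under $L$.

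Second, I would normalise the interaction between adjacent columns. For each parent--child pair $(p,v)$ in $T_1$, the interleaving of $C_p\cap I$ and $C_v\cap I$ under $L$ falls into a bounded number of combinatorial types (disjoint intervals, nested, or interleaved in one of finitely many patterns). Using a tree-Ramsey argument (iterated pigeonhole along the tree structure), after passing to a further subtree $T_2\subseteq T_1$ I can assume all parent--child interleavings share the same type. The layout restricted to $T_2$ (together with the index set $I$) is then highly symmetric and amenable to direct analysis.

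Finally, within this rigid substructure I would exhibit $s+1$ pairwise crossing edges. The construction depends on the shared interleaving type: in each case, the diagonal edges $(v,i)\sim(p,i+1)$ along a deep root-to-leaf chain in $T_2$, interacting with the monotone column orderings obtained in the first step, force a growing family of mutually crossing edges; pigeonhole on the $s$ colours then yields two edges of the same colour that cross, contradicting the stack property. The main obstacle is precisely this last step: one must perform a case analysis over the interleaving types, and for each type produce an explicit family of $s+1$ pairwise crossing edges from the diagonal and horizontal edges. Quantifying how large $T$ and $P$ must be --- tracking the tower of Ramsey reductions in Steps~1--2 so that enough structure survives to force $s+1$ crossings --- is the technical heart of the argument.
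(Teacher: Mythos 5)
Your first two reduction steps are in the spirit of what the paper does (Ramsey/Erd\H{o}s--Szekeres passes to make orders and interactions uniform), but the proposal has a genuine gap exactly where you acknowledge ``the main obstacle'': you assert, without argument, that after normalisation the diagonal edges along a deep root-to-leaf chain ``force a growing family of mutually crossing edges.'' There is no reason to believe this, and the paper's own analysis shows why it fails as stated. Most of the uniform configurations that survive the Ramsey passes (what the paper calls bundled and rainbow pairs of sequences) produce \emph{no} crossings among equally coloured edges by themselves; they only propagate interleaving information. A same-colour crossing can only be extracted when at least one \emph{fan} is present (two fans, or a fan against a rainbow), and fans arise only at the top layer of the tree. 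So the contradiction cannot be obtained locally from a single chain with a fixed interleaving type plus pigeonhole on colours; one needs a global argument locating two same-coloured fan/rainbow configurations that are forced to interleave strongly after the interleave is carried along a long chain.

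Concretely, the paper needs machinery your sketch omits: monotonicity is imposed not column-by-column along the path but across the tree coordinates at each fixed path position, via the high-dimensional Erd\H{o}s--Szekeres theorem for lex-monotone arrays; it is then a nontrivial lemma (proved using the crossing machinery itself) that the lex order reads coordinates in depth order. The resulting directions are encoded as a $2$-colouring $Z(i,j,p)$ of a family of hexagonal grids, a consistency lemma constrains how this colouring varies across layers, and a chromatic-boundary/hex-style argument on the grid (not a simple monochromatic path, and certainly not a single root-to-leaf chain) is what finally produces $s+1$ positions from which pigeonhole on colours yields two same-coloured fans or a fan and a rainbow that must cross. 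Without an argument of this kind, your final step is an unsupported claim, and the case analysis you defer is precisely the theorem's difficulty rather than a routine verification. Your step-one reduction also has a smaller structural issue: selecting vertices by pigeonhole on their column permutation type does not obviously leave a subtree with large degrees, which is the form of ``pass'' the later arguments require.
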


\Thmref{main-result-statement} follows as it is not difficult to
see that $T\boxslash P$ have queue number at most $3$ (Proof is
given in \secref{queue-3}).

Both \cite{key-first-4,three-products} asked whether graphs with
queue number $3$ can have unbounded stack numbers. \thmref{main-result-statement}
shows that the answer is yes. In \cite{key-first-4} MLA Dujmovi\'{c},
Vida, et al. asked whether $T\boxtimes P$ have unbounded stack number,
and since $T\boxslash P$ is a subgraph of $T\boxtimes P$ , our result
implies $T\boxtimes P$ also have unbounded stack number.

In the proof of the main theorem, we have applied the high dimensional
Erd\H{o}s--Szekeres theorem (\cite{erdosgoodbound}) many times,
and thus the required size of $T\boxslash P$ grows unreasonably quickly.
As a result, we did not compute the exact size of $T\boxslash P$
needed.

\begin{figure}[h]
\begin{centering}
\includegraphics[scale=0.3]{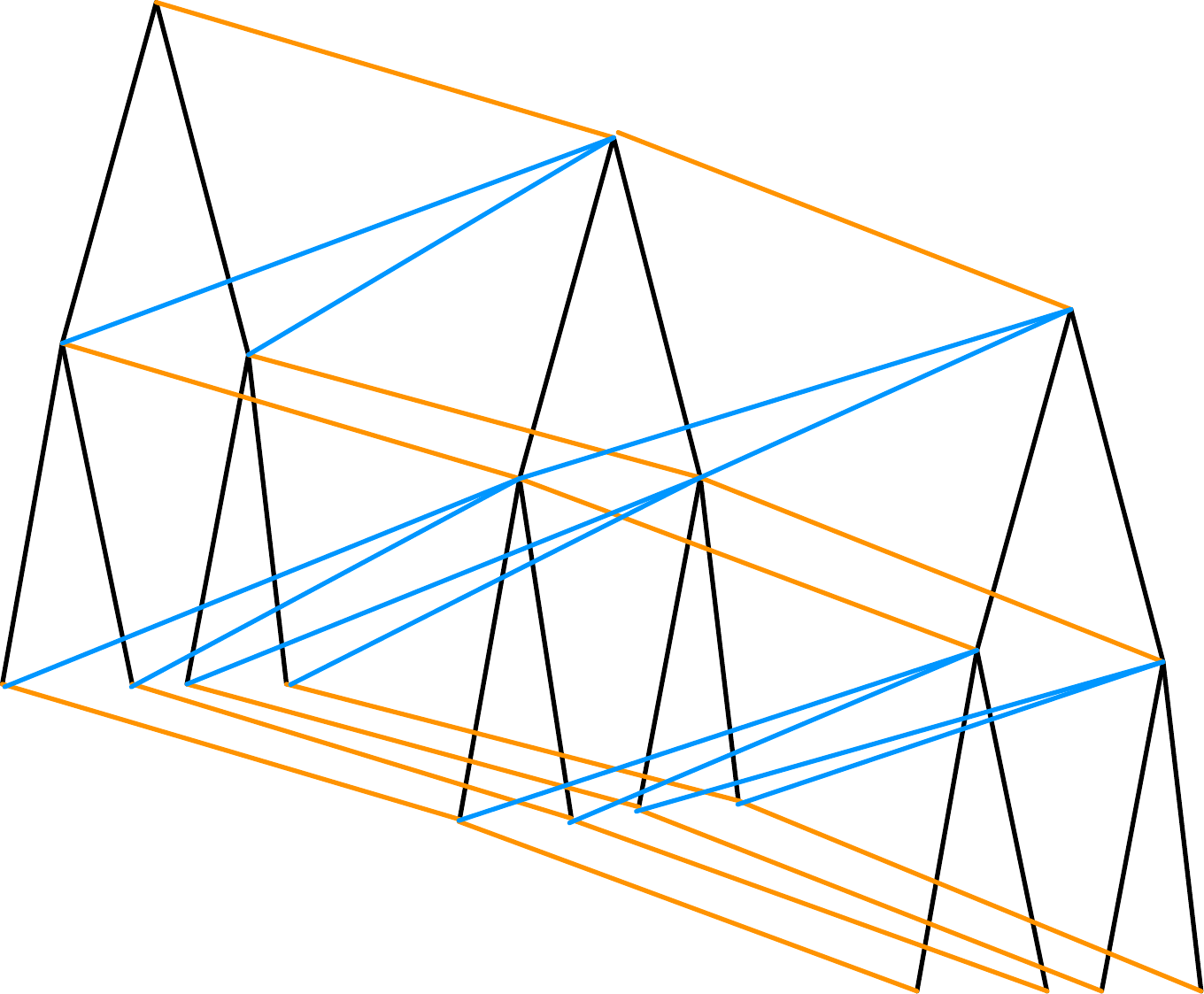}
\par\end{centering}
\caption{$T\boxslash P_{3}$ where $T$ is a binary tree of height $2$. Black
edges are vertical, Blue edges are diagonal, orange edges are Horizontal. }
\end{figure}

\subsection{Overview of the proof }

The proof of the main result starts by fixing a $s-$stack layout,
and derive a contradiction. After fixing a $s$-colouring, we make
use of the non-crossing assumption via monotone sequences. \Secref{sequences}
develops a few lemma about monotone sequences, with \lemref{ultra-inter}
and its variants being the key piece. The first step is found in \secref{ramsey-passes},
where we apply a total of three Ramsey arguments on $T\boxslash P$.
Most of the Ramsey arguments can be applied without properties of
a stack layout, but we need \lemref{ultra-inter} to say more about
the lex-monotone property (\lemref{identity-permutation}). Regarding
assumptions on $L$, lex-monotone is the best we can do by Ramsey
arguments alone, and we cannot make further assumptions about whether
various sequences are monotone increasing or decreasing. The general
monotone directions of various sequences in $T\boxslash P$ is encoded
in a function $Z(i,j,p)$. We should see that $Z(i,j,p)$ are not
completely arbitrary in \lemref{z-consistent}. In \secref{hexgaonal},
we introduce the hexagonal grid, making sense of the condition in
\lemref{z-consistent}. Finally, in \secref{main-proof} we use the
property of the hexagonal grid, as well as \lemref{ultra-inter-weak}
to deduce the contradiction. 

\subsection{\label{subsec:comparison}Comparison with the case of $S\square(P\boxslash P)$}

In \cite{key-first-4}, MLA Dujmovi\'{c}, Vida, et al. showed the
graphs $S\square(P\boxslash P)$ have unbounded stack number, establishing
that graphs with queue number $4$ can have unbounded stack number.
Write $R$ to be the set of leaves in $S$. Write $r$ to be the root
of $S$. We outline their proof using our notations:
\begin{itemize}
\item Apply Ramsey arguments to $S\square(P\boxslash P)$. By keeping only
a subset of vertex in $S$, we can assume edge colours and relative
order are as equal as possible (this is \propref{pass-colour-weak}
and \propref{pass-order-weak} in the case of $T$ have height $1$.)
\item Using Erd\H{o}s--Szekeres theorem, and keeping only a further subset
of $S$, assume that the sequences $A_{p,q}=\{(v,p,q)\}_{v\in R}$
are monotone 
\item Define a 2-colouring on $P_{n}\boxslash P_{n}$ by $Z(p,q)\in\{INC,DEC\}$
, determined by whether the sequence $A_{p,q}$ is monotone increasing/decreasing
respectively. The hex lemma (\lemref{hex-lemma}) shows that we can
find a monochromatic path $P^{*}\subset V(P\boxslash P)$ of length
at least $n$.
\item For each $(a,b)\in P^{*}$, the sequence $A_{p,q}$ are fanned (\defref{fan})
to $(r,p,q)$. Every pairs of adjacent $(x,y)\in P^{*}$ are bundled
(by \propref{bundled}). For every two $x,y\in P^{*}$, their corresponding
sequences must have a large interleave (by \corref{half_inter}).
Pick two vertex on $P^{*}$ which corresponds to fans of the same
colour, \propref{fan-fan} shows that there is a contradiction. 
\end{itemize}
The key differences are as follows, with the important new ideas mentioned: 
\begin{itemize}
\item The behavior of $v\in T\boxslash P$ for different layers of $T$
behave very differently. Instead of three coordinates as in $S\square(P\boxslash P)$,
$T\boxslash P$ should be regarded as a high dimensional structure.
The Ramsey arguments on edge-colours and relative order still applies,
though requires extra notations to make sense of it for high dimensional
case (\propref{pass-colour} and \propref{pass-order})
\item For the second step, we can certainly assume each layer are separately
monotone. However, for the key lemma \lemref{z-consistent}, it is
necessary to assume more relations across layers. The new concept
we used are lex-monotone, obtained by applying a high dimensional
generalization of Erd\H{o}s--Szekeres theorem \cite{erdosgoodbound}.
\item We find that fanned sequences and bundled pairs alone, unlike the
case of $S\square(P\boxslash P)$, are not enough to derive a contradiction.
We make use of an additional type of sequences pairs -- rainbow sequences
(\propref{rainbow}), which arrises in a similar way with bundled
pairs. Properties of rainbow pairs include \propref{rainbow_inter}
and \propref{fan-rainbow}.
\item Instead of a two-dimensional $2$-colouring $Z(p,q)$, in $T\boxslash P$
we can only assume a three-dimensional $2$-colouring $Z(i,p,q)$
(defined in \ref{def:Z}). Fortunately, the colouring $Z(i,p,q)$
are not completely arbitrary and satisfy \lemref{z-consistent}, proved
using \lemref{identity-permutation}. This property can be put into
a nice way using chromatic boundary (\defref{chromatic-boundary}
and \corref{chromatic-perserve})
\item In $S\square(P\boxslash P)$, the sequences corresponding to colours
$Z(p,q)$ are always fanned. As two fans always cross (\propref{fan-fan}),
a contradiction follows easily from a monochromatic path. In our case,
only points $Z(i,i,p)$ corresponds to fans, and points $Z(i,j,p)$
for $j>i$ only correspond to another sequence pair (rainbow or bundled).
The values $Z(i,i,p)$ can be interpreted as the top layer of $T\boxslash P$,
and essentially the top row of a hexagonal grid. We can only derive
contradiction from two interleaving fans, or interleaving fan and
rainbow, but not from two rainbows or with any bundled sequences.
We certainly cannot assume the existence of a long monochromatic path
on the top of the hexagonal grid. This caused considerable difficulties.
Instead of just taking a monochromatic path, we have used the observations
behind the hex lemma as well as additional geometrical properties
to conclude the proof (in \lemref{long-line-bad} and \lemref{top-or-long}).
\end{itemize}

\section{Monotone sequences \label{sec:sequences}}

We make use of the non-crossing assumption through monotone sequences.
Plenty of monotone sequences can be found after applying Ramsey arguments
to $T\boxslash P$. This section discusses only the implications of
non-crossing edges on a linear order, and does not use properties
of $T\boxslash P$.

We begin by assuming $T\boxslash P$ have a $s$-stack layout, thus
a linear order $L$ on $V(T\boxslash P)$ and a $s$-colouring $\phi$
on $E(T\boxslash P)$. $L$ is the only linear order being considered
throughout this section. By sequences, we mean a sequence of vertex
of $T\boxslash P$, but we shall refer to them simply as ``elements''
throughout this section. For $a,b\in V(T\boxslash P)$, we write $a<b$
to mean $a<_{L}b$. An edge refers to an actual edge on $T\boxslash P$,
but through out this section we may simply consider it as an unordered
pair of elements. The colour of an edge is the colour given by $\phi$.
Since $T\boxslash P$ is a stack-layout, edges of the same colour
cannot cross (with respect to $L$).

We use capital letters $A,B,\cdots$ to denote sequences. Their corresponding
lower case letters (e.g. $a_{1},a_{2},\cdots a_{n}$) denote elements
of the sequence. 
\begin{rem}
For the rest of the article, whenever we mention multiple sequences
at the same time, we always assume all elements involved are distinct.
\end{rem}

After making Ramsey arguments on $T\boxslash P$ in \secref{ramsey-passes},
we will find many pairs of sequences satisfying following three assumptions.
This is our starting point.
\begin{defn}
\label{def:basic-monotone}A sequence $A=[a_{1},a_{2}\cdots a_{t}]$
is \emph{monotone increasing} if $a_{1}<a_{2}\cdots<a_{t}$, is \emph{monotone
decreasing} if $a_{1}>a_{2}>\cdots>a_{t}$. $A$ is \emph{monotone}\textbf{
}if it is either monotone increasing or monotone decreasing. 
\end{defn}

\begin{defn}
\label{def:order-consistent}We say two monotone sequences $A,B$
of the same length to be \emph{order consistent}\textbf{ }if either
$A_{i}<B_{i}$ for all $i$ , or $A_{i}>B_{i}$ for all $i$.
\end{defn}

\begin{defn}
\label{def:uniformly-adjacent}We say two monotone sequences $A=[a_{1},a_{2}\cdots a_{t}],B=[b_{1},b_{2}\cdots b_{t}]$
of the same length are \emph{uniformly adjacent} of colour $c$, if
there is an edge of $a_{i}\sim b_{i}$ of colour $c$ for each $i\in\{1,..,t\}$.
Consequently, the edges $a_{i}\sim b_{i}$ do not cross with one another.
\end{defn}

We give a name to pairs of sequences that satisfy the three conditions
simultaneously.
\begin{defn}
We say two sequences $A,B$ of the same length are \textbf{related}
if they are order consistent, uniformly adjacent of some colour, and
are both monotone. They are related of colour $c$ if they are uniformly
adjacent of colour $c$. 
\end{defn}

Two monotone sequences $A,B$ can be in same direction, if they are
both increasing or both decreasing, or in different directions, in
the case that exactly one is increasing. We denote two related sequences
$A,B$ by $A\sim B$, both in the case of same or different directions.
The two cases produce vastly different structures c.f. \propref{bundled}
\propref{rainbow}. 
\begin{defn}
\label{def:strong-interleave}Two monotone sequences $A,B$ of the
same length $k$ are said to \emph{strongly-interleave,} if they are
in the same direction, and one of the following four cases apply:
\begin{itemize}
\item (Both increasing), $a_{1}<b_{1}<a_{2}<b_{2}\cdots<a_{k}<b_{k}$ ,
or $b_{1}<a_{1}<b_{2}<a_{2}\cdots<b_{k}<a_{k}$, or
\item (Both decreasing) $a_{1}>b_{1}>a_{2}>b_{2}\cdots>a_{k}>b_{k}$ , or
$b_{1}>a_{1}>b_{2}>a_{2}\cdots>b_{k}>a_{k}$. 
\end{itemize}
\end{defn}

\begin{prop}
\label{prop:bundled}Let $A,B$ be related sequences. Suppose $A$
and $B$ are in the same direction, then $A$ and $B$ strongly interleave. 
\end{prop}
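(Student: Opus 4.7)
The plan is to reduce to a single canonical case by symmetry, and then show that the pattern for every pair of consecutive indices is forced. By reversing the linear order if necessary, I may assume both $A$ and $B$ are increasing; by swapping the roles of $A$ and $B$, I may further assume the order-consistency witness is $a_i<b_i$ for all $i$. Under these normalisations, the goal becomes the single chain $a_1<b_1<a_2<b_2<\cdots<a_t<b_t$, which reduces to showing $b_i<a_{i+1}$ for every $i\in\{1,\ldots,t-1\}$.

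Fix such an $i$. Monotonicity gives $a_i<a_{i+1}$ and $b_i<b_{i+1}$, and order consistency gives $a_i<b_i$ and $a_{i+1}<b_{i+1}$; all four elements are distinct by the standing remark. These constraints leave exactly two possible linear orderings of the four elements, namely $a_i<b_i<a_{i+1}<b_{i+1}$ and $a_i<a_{i+1}<b_i<b_{i+1}$ (any ordering placing $b_{i+1}$ before $b_i$ is excluded by $b_i<b_{i+1}$, and any ordering placing $a_{i+1}$ before $a_i$ is excluded by $a_i<a_{i+1}$). The first is the separated pattern and the second is the crossing pattern in the sense of the opening definition.

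Here is where the uniform adjacency of colour $c$ is used: the edges $a_i\sim b_i$ and $a_{i+1}\sim b_{i+1}$ both carry colour $c$, and in a stack layout two edges of the same colour cannot cross. This rules out the second ordering, so $a_i<b_i<a_{i+1}<b_{i+1}$ holds. Concatenating this across all $i$ yields the desired interleaving in the increasing case, and inverting the initial normalisations recovers the decreasing case and the case $b_i<a_i$, which together are the four bullets in the definition of strong interleaving.

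There is essentially no obstacle here; the content of the proposition is that the three hypotheses (monotone, order-consistent, uniformly adjacent of one colour), taken together with the stack-layout non-crossing property, leave no room for anything other than the interleaved pattern, and the case analysis above makes that explicit. The only care required is to keep straight which of the four strong-interleaving bullets the chosen normalisation corresponds to.
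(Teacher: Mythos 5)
Your proof is correct and follows essentially the same route as the paper: normalise so that both sequences are increasing with $a_i<b_i$, observe that $a_i$ is forced smallest and $b_{i+1}$ largest among the four elements $a_i,b_i,a_{i+1},b_{i+1}$, and use the non-crossing of the two equally coloured edges $a_i\sim b_i$, $a_{i+1}\sim b_{i+1}$ to exclude the ordering $a_i<a_{i+1}<b_i<b_{i+1}$, then concatenate over $i$. The only cosmetic difference is the normalisation device (you reverse the linear order, the paper reverses the enumerations of $A$ and $B$), which is immaterial.
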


\begin{proof}
Without loss of generality, we assume $A,B$ are both monotone increasing,
and $a_{i}<b_{i}$ for all $i$(using order consistent assumption).
We can reverse the numberings on both $A,B$, or swap $A$ and $B$
to achieve this. 

For each $i$ , consider the four elements $a_{i},b_{i},a_{i+1},b_{i+1}$.
Then since $a_{i}<b_{i}$ and $a_{i+1}<b_{i+1}$, $a_{i}<a_{i+1}$
and $b_{i}<b_{i+1}$, we know $a_{i}$ must be the smallest and $b_{i+1}$
must be the largest among the four, and so there are two options:
\begin{enumerate}
\item $a_{i}<a_{i+1}<b_{i}<b_{i+1}$
\item $a_{i}<b_{i}<a_{i+1}<b_{i+1}$
\end{enumerate}
The first case has edges $(a_{i},b_{i})$ and $(a_{i+1},b_{i+1})$
cross, but the edges $(a_{i},b_{i})$ and $(a_{i+1},b_{i+1})$ have
the same colour since $A$ and $B$ are uniformly adjacent. So this
is not possible. 

Therefore, we must have $a_{i}<b_{i}<a_{i+1}<b_{i+1}$. 

Consider for all positions $i$, we know we must have $a_{1}<b_{1}<a_{2}<b_{2}\cdots<a_{n}<b_{n}$.
So $A$ and $B$ strongly interleave.

\begin{figure}
\begin{centering}
\includegraphics{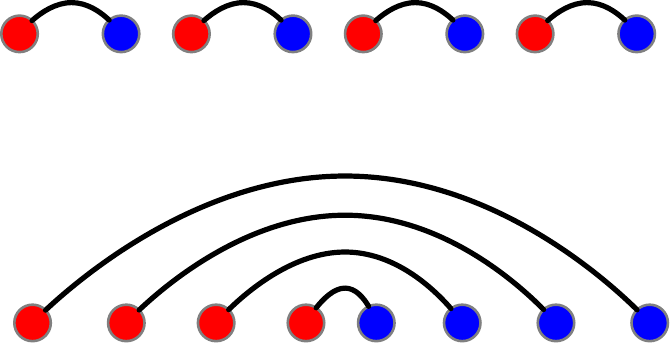}
\par\end{centering}
\caption{Bundled (top) and rainbow (bottom) sequences}
\end{figure}
\end{proof}
A pair of sequences $A,B$ satisfying the hypothesis of \propref{bundled}
is known as \textbf{bundled}.

The following observation enables us to talk about strong-interleaves
without considering two separate cases. 
\begin{fact}
\label{cor:intermediate}If $A$ and $B$ are two strongly interleaving
sequences, then between any two elements of $A$, there exist another
element of $B$.
\end{fact}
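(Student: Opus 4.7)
The claim reduces immediately to the definition of strong interleaving, so the plan is a short direct case analysis rather than any new idea. Given two elements $a_i, a_j$ of $A$, we may assume $i < j$, and by symmetry between the increasing and decreasing cases we may assume both $A$ and $B$ are monotone increasing, so that $a_i <_L a_j$. The goal is to exhibit some $b_\ell$ with $a_i <_L b_\ell <_L a_j$.

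There are then only two patterns allowed by \defref{strong-interleave}. In the pattern $a_1 < b_1 < a_2 < b_2 < \cdots < a_k < b_k$, I would take $\ell = i$: by assumption $a_i < b_i < a_{i+1}$, and since $i+1 \le j$ and $A$ is increasing, $a_{i+1} \le a_j$, so $a_i < b_i < a_j$. In the pattern $b_1 < a_1 < b_2 < a_2 < \cdots < b_k < a_k$, I would instead take $\ell = i+1$ (which is defined since $i+1 \le j \le k$): now $a_i < b_{i+1} < a_{i+1} \le a_j$. Either way, the required $b_\ell$ is produced.

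The decreasing case is handled identically after reversing inequalities, or more cleanly by observing that reversing the order $L$ converts a decreasing strongly interleaving pair into an increasing one without changing the set of elements that lie between $a_i$ and $a_j$, so the increasing case already suffices. There is no real obstacle here: the only thing to be careful about is keeping track of which index of $B$ falls in the gap $(a_i, a_{i+1})$ in each of the two patterns, which is exactly what the case split above records.
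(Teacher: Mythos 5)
Your case analysis is correct: in each of the two increasing patterns of Definition \ref{def:strong-interleave} you identify the right index of $B$ falling in the gap $(a_i,a_{i+1})$, and monotonicity of $A$ extends this to arbitrary $a_i<a_j$; the decreasing case follows by reversing the order. The paper states this as a Fact without any written proof, and your argument is exactly the routine verification it leaves implicit, so there is nothing to compare beyond that.
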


\begin{prop}
\label{prop:rainbow}Let $A,B$ be two related sequences. Suppose
$A$ and $B$ are in the opposite direction, then $A$ and $B$ must
be totally separated:
\begin{itemize}
\item If $A$ is increasing then either $a_{1}<a_{2}\cdots<a_{n}<b_{n}<b_{n-1}<\cdots<b_{1}$
or $b_{n}<b_{n-1}<\cdots<b_{1}<a_{1}<a_{2}<\cdots<a_{n}$. 
\item If $A$ is decreasing then either $a_{n}<a_{n-1}<\cdots<a_{1}<b_{1}<b_{2}<\cdots<b_{n}$
or $b_{1}<b_{2}<\cdots<b_{n}<a_{n}<a_{n-1}<\cdots<a_{1}$.
\end{itemize}
\end{prop}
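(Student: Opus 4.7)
The plan is to mimic the argument of \propref{bundled}: consider two consecutive uniformly-adjacent edges $(a_i,b_i)$ and $(a_{i+1},b_{i+1})$, use the monotonicity of $A$ and $B$ together with order consistency to reduce the possibilities for their relative order, and then eliminate the crossing configurations using the fact that both edges carry the same colour $c$. Since $A$ and $B$ go in opposite directions, this pairwise analysis will force a strict nesting of the edges, and iterating across all $i$ gives the claimed total separation.

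Concretely, by symmetry (swap $A$ and $B$ if needed) we may assume $A$ is monotone increasing and $B$ is monotone decreasing, and by order consistency we split into two subcases. In the subcase $a_i<b_i$ for every $i$, fix an index $i$ and consider the four elements $a_i,a_{i+1},b_i,b_{i+1}$. The inequalities $a_i<a_{i+1}$, $b_{i+1}<b_i$, $a_i<b_i$, $a_{i+1}<b_{i+1}$ together identify $a_i$ as the minimum and $b_i$ as the maximum, leaving only two possible linear orders of the four: either $a_i<a_{i+1}<b_{i+1}<b_i$ (nesting) or $a_i<b_{i+1}<a_{i+1}<b_i$ (crossing). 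The second order would make the edges $(a_i,b_i)$ and $(a_{i+1},b_{i+1})$ cross, which is impossible since they share colour $c$. Hence we must be in the nested configuration for every $i$, which chains into $a_1<a_2<\cdots<a_n<b_n<b_{n-1}<\cdots<b_1$, matching the first alternative of the first bullet. The subcase $a_i>b_i$ for all $i$ is handled by the mirror argument: the four elements are totally ordered as $b_{i+1}<b_i<a_i<a_{i+1}$, the only candidate picture is again a nesting, and iterating gives $b_n<\cdots<b_1<a_1<\cdots<a_n$. The case $A$ decreasing and $B$ increasing follows from the previous case by reversing the indexing of both sequences, giving the two alternatives of the second bullet.

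I do not expect a real obstacle: the only thing to check is that the combination of monotonicity in opposite directions with order consistency genuinely eliminates the crossing configuration at every step, and the four-element case analysis above does exactly that. The argument is strictly local (adjacent indices $i,i+1$) and the global separation follows by transitivity, exactly as in \propref{bundled}, except that here non-crossing forces nesting rather than interleaving.
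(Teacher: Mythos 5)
Your four-element analysis in the first subcase contains a false step. From the four inequalities you list, namely $a_i<a_{i+1}$, $b_{i+1}<b_i$, $a_i<b_i$ and $a_{i+1}<b_{i+1}$, the order of the quadruple is already completely determined: $a_i<a_{i+1}<b_{i+1}<b_i$. The alternative order $a_i<b_{i+1}<a_{i+1}<b_i$ that you propose to eliminate contradicts order consistency at index $i+1$ (it has $b_{i+1}<a_{i+1}$), so it was never a candidate. Moreover, even taken on its own terms, that configuration is not a crossing: both endpoints of the edge $(a_{i+1},b_{i+1})$ lie inside the edge $(a_i,b_i)$, so the two edges nest, and nesting is not forbidden in a stack layout. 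Hence the colour-$c$ non-crossing argument you invoke would not exclude that order even if it were a live possibility. The error is harmless here only because the configuration is impossible for the simpler reason above, but as written the central elimination step of your proof is wrong.

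This also highlights the real difference from \propref{bundled}: there the two sequences are monotone in the same direction, the constraints genuinely leave two possible orders, and the non-crossing hypothesis is needed to kill the crossing one. In the opposite-direction case no edge or colour information is needed at all. The paper's proof is a two-line observation: if $A$ is increasing and $a_i<b_i$ for all $i$, then in particular $a_n<b_n$, which combined with $a_1<\cdots<a_n$ and $b_n<b_{n-1}<\cdots<b_1$ immediately gives $a_1<\cdots<a_n<b_n<\cdots<b_1$; the case $b_1<a_1$ is symmetric. Your conclusion and the chaining over $i$ are fine; to repair the write-up, drop the crossing argument and simply note that your four listed inequalities force the unique order of the quadruple (or argue globally via the single comparison $a_n$ versus $b_n$, as the paper does).
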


\begin{proof}
Without loss of generality, $A$ is increasing. Consider whether $a_{i}<b_{i}$
, if $a_{i}<b_{i}$, then we must have $a_{n}<b_{n}$, but $a_{1}<a_{2}\cdots<a_{n}$
and $b_{n}<b_{n-1}\cdots<b_{1}$, so $a_{1}<a_{2}\cdots<a_{n}<b_{n}<b_{n-1}<\cdots<b_{1}$
follows. In the other case, $b_{1}<a_{1}$ so $b_{n}<b_{n-1}<\cdots<b_{1}<a_{1}<a_{2}<\cdots<a_{n}$.

We call a pair of sequences satisfying the assumptions in \propref{rainbow}
to be \textbf{rainbow}
\end{proof}

\subsection{Chains of Interleaves\protect 
}\begin{defn}
\label{def:k-interleave}Two sequences of points are said to $k$\textbf{-interleave},
if they are monotone in the same direction, and there is a length
$k$ subsequence $A'$ of $A$, length $k$ subsequence $B'$ of $B$
, such that $A'$ and $B'$ strongly interleave.
\end{defn}

A subsequence of $A=[a_{1},a_{2}\cdots a_{n}]$ is another sequence
$B=[b_{1},b_{2},\cdots b_{k}]$, such that there exist $k$ positions
$i_{1}<i_{2}<\cdots<i_{k}$ with $a_{j}=b_{i_{j}}$. If $A'$ is a
subsequence of $A$ and $A$ is monotone, then $A'$ is also monotone
in the same direction.
\begin{prop}
\label{prop:sub_inter} If $A,B$ strongly interleave, $A'$ is a
subsequence of length $k$, then there is a subsequence $B'$ of $B$
such that $A',B'$ strongly interleave 
\end{prop}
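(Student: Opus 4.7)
The plan is to produce $B'$ by selecting from $B$ the elements at the \emph{same} indices used to form $A'$, and then read off the required interleaving inequalities from the original strong-interleave pattern together with the monotonicity of $A$ and $B$.

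More concretely, I would first reduce to the case where both $A$ and $B$ are increasing; the decreasing case is symmetric (one can reverse both sequences and appeal to the increasing case). Write $A = [a_1, \ldots, a_n]$, $B = [b_1, \ldots, b_n]$, and suppose $A'$ corresponds to the index set $i_1 < i_2 < \cdots < i_k$, so $A' = [a_{i_1}, \ldots, a_{i_k}]$. Define $B' = [b_{i_1}, \ldots, b_{i_k}]$; this is indeed a subsequence of $B$.

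The verification splits into the two possible interleave patterns from \defref{strong-interleave}. In the pattern $a_1 < b_1 < a_2 < b_2 < \cdots < a_n < b_n$, the inequality $a_{i_j} < b_{i_j}$ is inherited directly, while $b_{i_j} < a_{i_{j+1}}$ follows from $b_{i_j} < a_{i_j + 1} \le a_{i_{j+1}}$, using $i_{j+1} \ge i_j + 1$ and monotonicity of $A$. In the pattern $b_1 < a_1 < b_2 < a_2 < \cdots < b_n < a_n$, the inequality $b_{i_j} < a_{i_j}$ is again inherited, and $a_{i_j} < b_{i_{j+1}}$ follows analogously from $a_{i_j} < b_{i_j + 1} \le b_{i_{j+1}}$ using monotonicity of $B$.

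There is essentially no obstacle here; the only care needed is to keep track of which of the two interleaving patterns one is in, so that $B'$ is picked at the correct index set and the chain of inequalities is read off consistently. Once the pattern is fixed, the result reduces to a one-line application of the monotonicity of the ambient sequences.
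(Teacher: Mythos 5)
Your proposal is correct and matches the paper's proof: both take $B'$ to be the subsequence of $B$ at the same index set $i_1<\cdots<i_k$ as $A'$, the only difference being that you spell out the verification of the inequalities (which the paper leaves implicit). Nothing further is needed.
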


\begin{proof}
Regardless of how $A,B$ strongly interleave, if $A'=[a_{i_{1}},a_{i_{2}}\cdots,a_{i_{k}}]$,
then $B'=[b_{i_{1}},b_{i_{2}}\cdots b_{i_{k}}]$ strongly interleaves
with $A'$.
\end{proof}
\begin{lem}
Let $A,B,C$ be three sequences of points monotone in the same direction
with same length $n$. If $A,B$ strongly interleave, and $B,C$ strongly
interleave, then $A$ and $C$ $(\lceil\frac{n}{2}\rceil-1)$-interleave.
\end{lem}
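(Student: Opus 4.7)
The plan is to reduce to a finite case analysis on the orientations of the two given strong interleaves. Without loss of generality assume all three of $A,B,C$ are monotone increasing (otherwise reverse the ambient order, which sends strong interleaves to strong interleaves). By \defref{strong-interleave}, each of the pairs $(A,B)$ and $(B,C)$ admits one of two possible interleaving patterns, giving four combined configurations, and by the $A \leftrightarrow C$ symmetry we may analyse only two essentially distinct ones.

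In the ``chain'' configuration, where $a_1 < b_1 < a_2 < b_2 < \cdots < a_n < b_n$ is paired with $b_1 < c_1 < b_2 < c_2 < \cdots < b_n < c_n$ (or its reverse), the two chains concatenate: one reads $a_i < b_i < c_i < a_{i+1}$ and hence $a_1 < c_1 < a_2 < c_2 < \cdots < a_n < c_n$. So $A$ and $C$ themselves strongly interleave, which is even better than required (an $n$-interleave).

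The remaining ``antiparallel'' configuration is the main obstacle: here, for example, $a_1 < b_1 < a_2 < b_2 < \cdots$ is paired with $c_1 < b_1 < c_2 < b_2 < \cdots$, so both $a_i$ and $c_i$ land in the same gap $(b_{i-1}, b_i)$ and their relative order is not forced by the hypotheses. To get around this I would use $B$ as a separator by skipping every other index: set $A' = (a_1, a_3, a_5, \ldots)$ and $C' = (c_2, c_4, c_6, \ldots)$. Then $a_{2k-1}$ lies in the gap $(b_{2k-2}, b_{2k-1})$ (using only $b_{2k-2} < a_{2k-1} < b_{2k-1}$ from the $(A,B)$ interleave) and $c_{2k}$ lies in the gap $(b_{2k-1}, b_{2k})$ (similarly from the $(B,C)$ interleave), so $a_{2k-1} < b_{2k-1} < c_{2k} < b_{2k} < a_{2k+1}$. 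The two subsequences therefore alternate cleanly.

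Trimming $A'$ and $C'$ to a common length yields a strong interleave of length $\lfloor n/2 \rfloor$, which is at least $\lceil n/2 \rceil - 1$ as claimed. The only slightly fiddly part of the argument is the parity bookkeeping for the antiparallel configurations, in particular checking that the endpoints line up so that the two trimmed subsequences have equal length; everything else reduces to a direct inspection of the order relations given by \defref{strong-interleave}.
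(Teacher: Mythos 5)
Your ``antiparallel'' argument is sound, and it is essentially the paper's proof: the paper runs exactly this skipping argument, but uniformly, using \corref{intermediate} twice (between $a_i$ and $a_{i+2}$ there are two elements of $B$, and between those two elements of $B$ there is an element of $C$, hence some $c$ with $a_i<c<a_{i+2}$; doing this for $i=1,3,5,\dots$ gives the $\lceil\frac{n}{2}\rceil-1$ interleave with no case split at all). The genuine gap is in your ``chain'' case. From $a_1<b_1<a_2<\cdots<a_n<b_n$ and $b_1<c_1<b_2<\cdots<b_n<c_n$ you cannot deduce $c_i<a_{i+1}$: both $c_i$ and $a_{i+1}$ lie strictly inside the same gap $(b_i,b_{i+1})$, and nothing in the hypotheses orders them relative to each other, so the conclusion that $A$ and $C$ strongly interleave at full length $n$ is simply false. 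Concretely, place the points in the order $a_1<b_1<a_2<c_1<b_2<a_3<c_2<b_3<\cdots<a_n<c_{n-1}<b_n<c_n$, i.e.\ in each gap $(b_i,b_{i+1})$ put $a_{i+1}$ before $c_i$. Then $A,B$ strongly interleave and $B,C$ strongly interleave in exactly your chain pattern, yet neither $a_1<c_1<a_2$ nor $c_1<a_1$ holds, so no length-$n$ strong interleave of $A$ with $C$ exists (one only gets an $(n-1)$-interleave, consistent with the lemma but not with your claim).

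The repair is immediate and shows the case split was unnecessary: in the chain configuration the same skipping device works, since $a_{2k-1}<b_{2k-1}<c_{2k-1}<b_{2k}<a_{2k+1}$, so $A'=(a_1,a_3,\dots)$ and $C'=(c_1,c_3,\dots)$ strongly interleave after trimming, giving the stated bound; this is exactly what the paper's configuration-free argument does. Two smaller points: the $A\leftrightarrow C$ swap alone does not reduce four configurations to two (it fixes the two mixed ones and only exchanges the other two); you also need to reverse the linear order and renumber all three sequences, per the reductions used in \defref{strong-interleave} and \propref{bundled}. And the ``fiddly'' part is not the parity bookkeeping in the antiparallel case, which you handle correctly, but the chain case, where the write-up asserts an unforced inequality.
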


\begin{proof}
Consider $a_{i},a_{i+1}$ and $a_{i+2}$. By \corref{intermediate},
we find some $b_{x},b_{y}$ such that $a_{i}<b_{x}<a_{i+1}$, and
$a_{i+1}<b_{y}<a_{i+2}$. Apply \corref{intermediate}, we find some
$c_{z}$ with $b_{x}<c_{z}<b_{y}$. Therefore, we have found some
$c_{z}$ with $a_{i}<c_{z}<a_{i+2}$.

For each of $i=1,3,5...$, find some $c_{t_{i}}$ with $a_{i}<c_{t_{i}}<a_{i+2}$.
This means we can find a subsequence of $c_{t_{i}}$ such that $a_{1}<c_{t_{1}}<a_{3}<c_{t_{3}}<\cdots<a_{s-2}<c_{t_{s-2}}<a_{s}$
where $s$ is the largest odd number not exceeding $n$. 

We have found $\lceil\frac{n}{2}\rceil-1$ terms of $c_{t_{i}}$ ,
so $A$ and $C$ are $(\lceil\frac{n}{2}\rceil-1)$-interleaving.
\end{proof}
By passing to subsequences using \propref{sub_inter}, we get the
following:
\begin{cor}
\label{cor:half_inter}If $A$ and $B$ $k$-interleave, $B$ and
$C$ strongly interleave, then $A$ and $C$ $(\lceil\frac{k}{2}\rceil-1)$-interleave.
\end{cor}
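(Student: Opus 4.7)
The plan is to reduce the corollary directly to the preceding lemma by first replacing $B$ with the subsequence that witnesses the $k$-interleaving of $A$ and $B$, and then transporting this choice across the strong interleaving of $B$ and $C$ via \propref{sub_inter}.

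More concretely, by the definition of $k$-interleaving (\defref{k-interleave}), there exist length-$k$ subsequences $A' \subseteq A$ and $B' \subseteq B$ such that $A'$ and $B'$ strongly interleave; in particular $A',B'$ are monotone in the same direction as $A,B$. Since $B$ and $C$ strongly interleave and $B'$ is a length-$k$ subsequence of $B$, \propref{sub_inter} yields a length-$k$ subsequence $C' \subseteq C$ such that $B'$ and $C'$ strongly interleave. Now $A', B', C'$ are three sequences of length $k$, monotone in the same direction, with $A',B'$ strongly interleaving and $B',C'$ strongly interleaving, so the preceding lemma applies and gives that $A'$ and $C'$ are $(\lceil k/2 \rceil - 1)$-interleaving.

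Finally, by definition of $(\lceil k/2 \rceil - 1)$-interleaving, there are subsequences $A'' \subseteq A'$ and $C'' \subseteq C'$ of length $\lceil k/2 \rceil - 1$ that strongly interleave. Since subsequence is transitive, $A'' \subseteq A$ and $C'' \subseteq C$, so this witnesses that $A$ and $C$ are $(\lceil k/2 \rceil - 1)$-interleaving, as required.

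There is essentially no obstacle here: the content of the argument already lives in the preceding lemma, and the corollary is purely a passage-to-subsequence wrapper. The only thing to be careful about is bookkeeping with the direction of monotonicity (which is preserved under taking subsequences, so $A', B', C'$ are automatically monotone in the same direction) and the fact that \defref{k-interleave} builds the ``same direction'' requirement in, so the conclusion of the corollary is legitimately stated as an interleaving rather than just the existence of strongly interleaving subsequences.
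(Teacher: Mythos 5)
Your proof is correct and matches the paper's own argument, which likewise just invokes \propref{sub_inter} to pass to the witnessing subsequences and then applies the preceding lemma; you have merely spelled out the bookkeeping (including the direction check) that the paper leaves implicit.
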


Since all usage of the lemma have $k$ sufficiently large to begin
with, we will write $\frac{k}{2}$ instead of $\lceil\frac{k}{2}\rceil-1$
to simplify notations. This should not affect validity of the proof.

We plan to use this lemma to a chain of strong interleaves $A_{0}\sim A_{1}\sim\cdots\sim A_{n}$.
If each of the chains is of length $k$ , then the above lemma shows
$A_{0}$ and $A_{n}$ is at least $\frac{k}{2^{n}}$ interleaving.
Consider a graph on all sequences, and two sequences are adjacent
if they strongly interleave. In this case, every two sequences in
a connected component will have large interleave. A large interleave
is a key condition to derive contradiction in the next subsection.

The above lemma shows $A_{0}$ and $A_{n}$ is at least $\frac{k}{2^{n}}$
interleaving. Even though we should not concern ourselves with the
constants, the following simple optimisation is worth mentioning. 
\begin{prop}
Let $A_{0}\sim A_{1}\sim\cdots A_{n}$ be strongly interleaving chains,
each of length $k$. Then , $A_{1}$ and $A_{n}$ are $(\lceil\frac{k}{n}\rceil-1)$-
interleaving.
\end{prop}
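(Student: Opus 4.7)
The plan is to replace the halving argument of \corref{half_inter}, which loses a factor of $2$ at each link of the chain, by a single pass down the chain that loses only an additive $n-1$. Because each consecutive pair in the chain strongly interleaves, all the sequences $A_0, A_1, \ldots, A_n$ are automatically monotone in the same direction, so strong interleaving between any two of them is well-defined.

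The key claim I plan to establish, by induction on the chain length, is the following propagation statement: for any $1 \le i < j \le k$, there lie strictly between the $i$-th and $j$-th elements of $A_0$ at least $(j-i)-(n-1)$ elements of $A_n$, and these elements are consecutive in the monotone ordering of $A_n$. The base case $n=1$ is immediate from the strong-interleaving pattern between $A_0$ and $A_1$: strictly between the $i$-th and $j$-th elements of $A_0$ lie exactly $j-i$ consecutive elements of $A_1$ (regardless of which of the two orientations of the interleaving holds). For the inductive step, suppose $m$ consecutive elements of $A_{r-1}$ already lie inside a given $L$-interval; then \corref{intermediate} applied to each of the $m-1$ adjacent pairs in $A_{r-1}$ inserts an element of $A_r$ between them, yielding $m-1$ consecutive elements of $A_r$ in that interval.

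To finish, I would select the subsequence consisting of the $1$st, $(n+1)$th, $(2n+1)$th, \ldots, $(qn+1)$th elements of $A_0$, where $q = \lceil k/n \rceil - 1$ is the largest integer with $qn+1 \le k$. Each of the $q$ consecutive gaps in this selection has $A_0$-index distance exactly $n$, so by the propagation claim each gap contains at least one element of $A_n$. Picking one such element per gap and discarding the final chosen element of $A_0$ produces length-$q$ subsequences of $A_0$ and $A_n$ whose elements alternate in $L$-order, i.e.\ a strong interleaving of length $q$.

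The main point requiring care is bookkeeping: handling the two symmetric orientations (both-ascending and both-descending) of the strong-interleaving pattern, and the off-by-one identity $\lfloor (k-1)/n \rfloor = \lceil k/n \rceil - 1$. Neither presents a conceptual obstacle; the real content is the single-pass propagation via \corref{intermediate}, which replaces the repeated halving by a linear-cost bucketing of $A_0$ into blocks of width $n$.
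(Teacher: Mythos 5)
Your proposal is correct and is essentially the paper's own argument: propagate elements down the chain via \corref{intermediate}, losing one element per link, then bucket $A_{0}$ into index-blocks of width $n$ so that each block contributes one element of $A_{n}$, giving the $\lceil\frac{k}{n}\rceil-1$ strong interleave (your bookkeeping is in fact slightly cleaner than the paper's, which has a harmless off-by-one in the intermediate counts). Note you prove the claim for $A_{0}$ and $A_{n}$, which matches the paper's proof; the ``$A_{1}$'' in the statement is evidently a typo for $A_{0}$.
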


\begin{proof}
Let $a_{i}<a_{i+1}<\cdots<a_{i+n}$ be $n$ consecutive elements in
$A_{0}$. By \ref{cor:intermediate}, there are some elements of $A_{1}$
between each $a_{j}$ and $a_{j+1}$.

By applying this to each of $(a_{i},a_{i+1}),(a_{i+1},a_{i+2})\cdots(a_{i+n-1},a_{i+n})$,
in-between the $n$ elements of $A_{0}$ , there are at least $n-1$
elements of $A_{1}$.

Similarly, there are at least $n-2$ elements of $A_{2}$ in between
them, etc, and we finally see there is at least one element of $A_{n}$
lying between $a_{i}$ and $a_{n}$ .

This holds for every $i$, so we can find one element of $A_{n}$
in between each of $a_{0},a_{n},a_{2n},\cdots$. We can find at least
$\lceil\frac{k}{n}\rceil-1$ elements of $A_{n}$ , so $A_{0}$ and
$A_{n}$ are $\lceil\frac{k}{n}\rceil-1$-interleaving.
\end{proof}

\subsection{Crossing fans and rainbows}

We will want to derive contradictions from two crossing edges of equal
colour. 

The main way we derive contradiction will require having at least
one fan. \propref{fan-fan} gives a contradiction with two fans, \propref{fan-rainbow}
gives a contradiction with a fan and a rainbow. The previous section
showed bundled sequences preserve interleaves, \propref{rainbow_inter}
shows in addition that rainbows also preserve interleaves. The two
versions of contradictions and two versions of interleaves propagation
are combined into one lemma \lemref{ultra-inter}.
\begin{defn}
\label{def:fan}A sequence of points $A$ is said to be \emph{fanned}\textbf{
}of colour $c$ if there exist another vertex $v$ adjacent to all
$a_{i}$, and that each $v\sim a_{i}$ have colour $c$.
\end{defn}

The following notations make formal the geometric intuition of inside/outside
an edge. 
\begin{defn}
Let $e$ be an edge. Represent $e$ as $(a,b)$ where $a<b$. Then
a point $c$ is said to be \emph{inside}\textbf{ }$(a,b)$ if $a<c<b$.
It is said to be \emph{outside} $(a,b)$ if either $c<a$ or $a<b$.
\end{defn}

\begin{defn}
Two points $c,d$ are said to be on the \emph{same side }of edge $e$
, if they are both inside or both outside $e$. They are said to be
on \emph{different side }of $e$ if exactly one of $c,d$ is inside
$e$. 
\end{defn}

\begin{prop}
\label{prop:crossing}(a) Let $e$ be an edge. Edge $(a,b)$ and $e$
do not cross, if and only if $a,b$ are on the same side of $e$

(b) $a,b$ are on the same side to $(c,d)$ if and only if $c,d$
are on the same side of $(a,b)$

(c) for each $e$ , being on the same side of $e$ is an equivalence
relations on points
\end{prop}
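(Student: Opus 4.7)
The plan is to dispatch all three parts by reducing to a short case analysis on the six possible orderings of the endpoints of two disjoint edges under the linear order $L$.

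For (a), I would fix $e=(c,d)$ with $c<d$ and take a second edge $(a,b)$ with $a<b$ whose endpoints are disjoint from $\{c,d\}$. The six orderings of the four points $\{a,b,c,d\}$ group into exactly the three trichotomy classes from the original definition of separated/nest/cross: the two separated orderings ($a<b<c<d$ and $c<d<a<b$) both put $a,b$ outside $e$; the two nested orderings ($a<c<d<b$ and $c<a<b<d$) put $a,b$ both outside or both inside $e$ respectively; and the two crossing orderings ($a<c<b<d$ and $c<a<d<b$) each put one of $a,b$ inside $e$ and the other outside. Reading this table off immediately gives the biconditional: $(a,b)$ and $e$ fail to cross exactly when $a,b$ lie on the same side of $e$.

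For (b), I would simply invoke (a) twice together with the symmetry of the crossing relation in its two arguments (which is immediate from the definition in the paper, since crossing is defined symmetrically in $e_1,e_2$): $a,b$ are on the same side of $(c,d)$ iff $(a,b)$ and $(c,d)$ do not cross iff $(c,d)$ and $(a,b)$ do not cross iff $c,d$ are on the same side of $(a,b)$. For (c), the observation is that the edge $e=(c,d)$ partitions the points of $V(T\boxslash P)\setminus\{c,d\}$ into two disjoint subsets (the inside $\{x : c<x<d\}$ and the outside $\{x : x<c \text{ or } x>d\}$), so ``on the same side of $e$'' is literally the equivalence relation of belonging to the same block of a two-block partition, which is trivially reflexive, symmetric, and transitive.

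The main and only real obstacle is bookkeeping around shared endpoints: the definition of inside/outside is only sensible for points distinct from $c$ and $d$, and the definition of crossing in the paper already excludes edge pairs sharing an endpoint from being called crossing, nested, or separated. So throughout I would implicitly restrict (a) and (b) to edges with four distinct endpoints, which is the only case where the notions entering the statement are all well-defined. Once that scope is fixed, the argument is a one-paragraph case check and has no creative content.
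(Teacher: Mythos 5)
Your proof is correct and follows essentially the same route as the paper: part (a) by the same six-ordering case check (with the same grouping into separated, nested, and crossing patterns), part (b) from (a) plus the symmetry of crossing, and part (c) from the observation that inside/outside is a two-block partition. The remark about restricting to four distinct endpoints is a harmless clarification consistent with the paper's conventions.
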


\begin{proof}
(a) Write $e=(c,d)$. Without loss of generality, assume $a<b$ and
$c<d$ . We verify the statement by considering all 6 possible permutations
subject to these two constraints:
\begin{itemize}
\item $a<b<c<d$
\item $a<c<b<d$ (cross)
\item $a<c<d<b$
\item $c<a<b<d$
\item $c<a<d<b$ (cross)
\item $c<d<a<b$
\end{itemize}
In options $2$ and $5$, the edges cross. Options $1$ and $6$ have
$a,b$ both outside of $(c,d)$ , and options $3,4$ have $a,b$ both
inside of $(c,d)$. Thus, we have verified that $a,b$ must be on
the same side of $e$ if $(a,b)$ and $e$ do not cross. 

Conversely, since $a,b$ are on different side in options \emph{$2$
and $5$}, if $a,b$ are on the same side, then it cannot be options
$2$ or $5$, so $(a,b)$ and $e$ do not cross. 

(b) Note that crossing is a symmetric relation of two edges. By (a),
both conditions are equivalent to the condition that $(a,b)$ and
$(c,d)$ do not cross. 

(c) There are only two components, points inside $e$ and points outside
$e$. 

\end{proof}
We will frequently make passes to subsequences. We collect some basic
properties below:
\begin{prop}
\label{prop:subsequence-pass}(a) Let $A$ be fanned of colour $c$,
then any subsequence of $A$ is also fanned of colour $c$. 

(b) Let $A,B$ be rainbow of colour $c$. Then for any subsequence
$A'$ of $A$, there is some subsequence $B'$ of $B$ such that $A'$,
$B'$ is rainbow of colour $c$.

(c) If $A,B$ strongly interleave, $A'$ is a subsequence of $A$,
then there is a subsequence $B'$ of $B$ such that $A',B'$ strongly
interleave.

(d) If $A,B$ $k$-interleave, then we can find a subsequence $A'$,
$B'$ of $A,B$ respectively, such that $A'$ and $B'$ are both of
length $k$ and they strongly interleave.
\end{prop}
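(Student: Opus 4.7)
The plan is to verify each part directly from the relevant definition, since each case reduces to the observation that the defining structure (a common neighbour, a colour class, an interleaving order) is preserved when passing to a subsequence indexed by a subset of positions. None of the four parts requires new ideas beyond the definitions in Section~\ref{sec:sequences}; the main thing to be careful about in (b) is matching up indices between $A$ and $B$ so that the rainbow separation continues to hold.

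For (a), I would simply note that the vertex $v$ witnessing the fan structure of $A$ is adjacent to every $a_i$ via an edge of colour $c$; restricting to a subsequence $A' = [a_{i_1},\dots,a_{i_k}]$ leaves those edges $v \sim a_{i_j}$ intact and of colour $c$, so $v$ witnesses $A'$ as a fan. For (c), this was already proved as \propref{sub_inter}: given $A' = [a_{i_1},\dots,a_{i_k}]$, take $B' = [b_{i_1},\dots,b_{i_k}]$ and observe that the strong interleave inequalities for $A,B$ in any of the four cases of \defref{strong-interleave} restrict to the corresponding inequalities on $A',B'$. For (d), this is immediate from \defref{k-interleave}: that definition literally asserts the existence of length-$k$ subsequences $A' \subseteq A$ and $B' \subseteq B$ that strongly interleave, so there is nothing further to prove.

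The only step with any content is (b). I would argue as follows: by the rainbow hypothesis $A,B$ are uniformly adjacent of colour $c$ via edges $a_i \sim b_i$, are in opposite directions, and satisfy one of the four total separation orderings of \propref{rainbow} (say $a_1 < a_2 < \cdots < a_n < b_n < b_{n-1} < \cdots < b_1$). Given a subsequence $A' = [a_{i_1},\dots,a_{i_k}]$ with $i_1 < \cdots < i_k$, set $B' := [b_{i_1},\dots,b_{i_k}]$. The edges $a_{i_j} \sim b_{i_j}$ still have colour $c$, so $A',B'$ are uniformly adjacent of colour $c$; $A'$ inherits monotonicity from $A$ and $B'$ inherits the opposite monotonicity from $B$; and restricting the separation inequality yields $a_{i_1} < \cdots < a_{i_k} < b_{i_k} < \cdots < b_{i_1}$, which is again one of the rainbow configurations. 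The other three cases are identical after reversing or swapping, so $A',B'$ is rainbow of colour $c$.

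Since each part is a direct unpacking of definitions, there is no real obstacle; the proposition serves mainly as bookkeeping to justify the many passes to subsequences performed in later sections. The one thing worth stating explicitly is that in (b) the subsequence of $B$ is \emph{determined} by the choice of $A'$ (via matching indices), whereas in (c) and (d) the subsequence of $B$ is merely shown to exist and can in fact be chosen the same way.
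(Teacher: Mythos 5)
Your proposal is correct and matches the paper's treatment: the paper states this proposition without proof (regarding it as routine bookkeeping), except that part (c) duplicates \propref{sub_inter}, whose one-line proof uses exactly your matching-index construction $B'=[b_{i_1},\dots,b_{i_k}]$. Your verification of (a), (b), (d) fills in the same definition-unpacking the paper implicitly relies on, so there is nothing to add.
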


\begin{prop}
\label{prop:fan-fan}Let $A,B$ be sequences such that

(1) $A$ and $B$ are $k$ - interleaving for some $k\geq2$, and

(2) both $A$ and $B$ are fanned of the same colour $c$

Then some edges in the two fans crosses, leading to contradiction.
\end{prop}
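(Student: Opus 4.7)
The plan is to reduce to a minimal instance and then derive the contradiction by a short case analysis on the position of the fan center.

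First, I would invoke \propref{subsequence-pass}(d) to replace $A$ and $B$ by length-$2$ subsequences that strongly interleave; by \propref{subsequence-pass}(a) these restricted sequences remain fanned of colour $c$ (with the same centers). After a harmless relabelling -- reversing the linear order if both sequences are decreasing, and/or swapping the roles of $A$ and $B$ to flip the interleave pattern -- I may assume $A = [a_1,a_2]$, $B = [b_1,b_2]$ with $a_1 < b_1 < a_2 < b_2$. Let $u$ denote the center of the fan on $A$ and $v$ the center of the fan on $B$ (with $u \neq v$, else the four edges would share endpoints pairwise and the statement would be vacuous).

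Next, suppose for contradiction that no two of the four edges $(u,a_1),(u,a_2),(v,b_1),(v,b_2)$ cross. Since all four share colour $c$, \propref{crossing}(a) applied to each pair $(u,a_i)$ vs. $(v,b_j)$ forces $v$ and $b_j$ to lie on the same side of $(u,a_i)$. Invoking the equivalence relation in \propref{crossing}(c) with $j=1,2$, I conclude that $b_1$ and $b_2$ are on the same side of $(u,a_1)$ and on the same side of $(u,a_2)$.

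Finally, a short bookkeeping step on the position of $u$ relative to the four ordered points $a_1 < b_1 < a_2 < b_2$ closes the argument. The first same-side constraint (on $(u,a_1)$) forces $u$ into $(-\infty,a_1)\cup(a_1,b_1)\cup(b_2,+\infty)$: if $u<a_1$ both $b_j$ are automatically outside, whereas if $u>a_1$ we need $u\le b_1$ or $u\ge b_2$ to keep $b_1,b_2$ jointly inside or jointly outside $(a_1,u)$. The second constraint (on $(u,a_2)$) forces $u$ into $(b_1,a_2)\cup(a_2,b_2)$: if $u<a_2$ we need $u>b_1$ to push $b_1$ outside $(u,a_2)$ (since $b_2$ is automatically outside), while if $u>a_2$ we need $u<b_2$ to push $b_2$ outside $(a_2,u)$. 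These two permitted regions are disjoint, so no position of $u$ satisfies both, yielding the required contradiction.

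The proof is entirely elementary; the only real obstacle is tidy case-by-case bookkeeping in the last step, and ensuring that the reduction to length-$2$ subsequences in the first step legitimately preserves both the fanning and the strong interleave, which is exactly what \propref{subsequence-pass} provides.
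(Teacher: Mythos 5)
Your proposal is correct and follows essentially the same route as the paper: reduce to length-$2$ strongly interleaving subsequences via \propref{subsequence-pass}, then use \propref{crossing} and transitivity to conclude that $b_{1},b_{2}$ lie on the same side of each fan edge of $A$. The only difference is the endgame -- the paper converts these same-side facts (via \propref{crossing}(b),(c)) into the statement that $(a_{1},a_{2})$ and $(b_{1},b_{2})$ do not cross, contradicting $a_{1}<b_{1}<a_{2}<b_{2}$, whereas you carry out an explicit case analysis on the position of the fan center $u$; both endgames are valid, and you share with the paper the implicit reliance on the fan centers being distinct vertices.
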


\begin{proof}
We assume both $A$ and $B$ are increasing. By passing to a subsequence
we assume $A,B$ strongly interleave and have length at least $2$.
Note that being fan is preserved under taking subsequences (\propref{subsequence-pass}(a)). 

Let's assume $a_{i}$ are fanned at $a$ , $b_{i}$ are fanned at
$b$, and there are four edges of equal colour $(a_{1},a),(a_{2},a),(b_{1},b),(b_{2},b)$.
Geometrically, we can put the edges on the upper half plane. Observe
that $(a_{1},a)$ and $(a_{2},a)$ together forms a continuous curve
$\gamma_{1}$ from $a_{1}$ to $a_{2}$, and $(b_{1},b)$ and $(b_{2},b)$
forms a continuous curve $\gamma_{2}$ from $b_{1}$ to $b_{2}$.
Under the assumption that $a_{1}<b_{1}<a_{2}<b_{2}$, curves $\gamma_{1}$
and $\gamma_{2}$ must cross. 

We prove this symbolically. Let $e=(a_{1},a)$. Since $e$ have same
colour as $(b_{1},b)$, $b$, $b_{1}$ are on the same side of $e$
by \propref{crossing}. Similarly, Since $e$ have same colour as
$(b_{2},b)$ , $b,$ $b_{2}$ are on the same side of $e$. Therefore,
$b_{1}$ and $b_{2}$ are on the same side of $e$

Similarly, we can see that $b_{1}$ and $b_{2}$ are on the same side
of $(a_{2,}a)$.

Using \propref{crossing} again, $a_{1},a,a_{2}$ are on the same
side of $(b_{1},b_{2})$ , so by \propref{crossing} again $(a_{1},a_{2})$
and $(b_{1},b_{2})$ do not cross. However, regardless of the way
$A$ and $B$ 2-interleave: either $a_{1}<b_{1}<a_{2}<b_{2}$ or $b_{1}<a_{1}<b_{2}<a_{2}$,
it is easy to see that $(a_{1},a_{2})$ and $(b_{1},b_{2})$ must
cross. Contradiction.
\end{proof}
This proposition gives a contradiction with interleaving fan and rainbow. 
\begin{prop}
\label{prop:fan-rainbow}Let $A,B,C$ be three sequences and $c$
be a colour. Suppose
\begin{enumerate}
\item $A,B$ are rainbow in colour $c$,
\item A and C are $k$-interleaving for $k\geq3$, and
\item C is fanned, of colour $c$,
\end{enumerate}
then some edges in (1) and (3) crosses, leading to contradiction.
\end{prop}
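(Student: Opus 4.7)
The plan is to imitate the style of \propref{fan-fan}: pass to short subsequences so that the combinatorics is manageable, then pin down the vertex $v$ at which $C$ is fanned on two incompatible sides of a single edge of the rainbow. The minimal length that works turns out to be $3$, matching the hypothesis $k \geq 3$.

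First I would apply \propref{subsequence-pass} to reduce to length three. Using part~(d), replace $A$ and $C$ by subsequences of length $3$ that strongly interleave; then use part~(b) to find a length-$3$ subsequence $B'$ of $B$ such that $A$ and $B'$ remain rainbow of colour $c$; the fanning of $C$ of colour $c$ is preserved by part~(a). After this reduction I may assume $A = [a_1,a_2,a_3]$, $B = [b_1,b_2,b_3]$, $C = [c_1,c_2,c_3]$, with $v$ joined to each $c_i$ by an edge of colour $c$, and with $(a_i,b_i)$ an edge of colour $c$ for each $i$.

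Next I would do the case analysis. By reversing the linear order if necessary, assume $A$ (hence $C$) is increasing, so $B$ is decreasing. Writing out the two rainbow configurations from \propref{rainbow} (either $a_1<a_2<a_3<b_3<b_2<b_1$ or $b_3<b_2<b_1<a_1<a_2<a_3$) together with the two strong-interleave configurations (either $c_i<a_i$ for all $i$ or $a_i<c_i$ for all $i$, with interlacing), I would look only at the middle nested edge $(a_2,b_2)$. A direct check of each of the four combinations shows that of the three points $c_1,c_2,c_3$, at least one lies inside the interval spanned by $(a_2,b_2)$ and at least one lies outside: concretely, any $c_j$ strictly between $a_1$ and $a_2$ ends up on the opposite side of $(a_2,b_2)$ from any $c_j$ strictly between $a_2$ and $a_3$, because in the first rainbow case these sides are "outside" and "inside" respectively, and in the second rainbow case they are reversed.

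Finally I would apply \propref{crossing}. Since every edge $(v, c_i)$ has the same colour as $(a_2,b_2)$, the non-crossing condition forces $c_i$ and $v$ to lie on the same side of $(a_2, b_2)$ for each $i$. Combined with the previous step this forces $v$ simultaneously inside and outside $(a_2, b_2)$, the desired contradiction. I do not expect a real obstacle: the only mildly delicate point is ensuring that after the subsequence reductions the rainbow, the fan, and the strong interleave all survive simultaneously, which is exactly what the four clauses of \propref{subsequence-pass} are designed to guarantee. Everything else is a finite case check on at most four orderings of six points.
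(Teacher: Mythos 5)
Your proposal is correct and matches the paper's proof in all essentials: pass to subsequences so that $A$ and $C$ strongly interleave, locate elements of $C$ between $a_1,a_2$ and between $a_2,a_3$, and use \propref{crossing} with the middle rainbow edge $(a_2,b_2)$ to force the fan apex (hence both chosen elements of $C$) onto one side of it, contradicting that they lie on opposite sides. The only cosmetic difference is that the paper disposes of the second rainbow configuration by reversing the order/enumeration rather than checking it explicitly as you do.
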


\begin{proof}
We start by passing to subsequences, and assume $A,C$ strongly-interleave
with length at least $3$. 

Geometrically, rainbow sequences divide the upper half plane into
$n$ pieces, where $n$ is the length of the sequences. Fan edges
must cross these boundaries.

By reversing the enumeration index or reverse the linear order if
necessary, we may assume $a_{1}<a_{2}<a_{3}<b_{3}<b_{2}<b_{1}$. 

We abandon the old enumeration of $C$. Using \corref{intermediate},
we can find some $c_{1}\in C$ such that $a_{1}<c_{1}<a_{2}$, and
some $c_{2}\in C$ with $a_{2}<c_{2}<a_{3}$. So we have $a_{1}<c_{1}<a_{2}<c_{2}<a_{3}<b_{3}<b_{2}<b_{1}$

Let $c_{1}$ and $c_{2}$ be fanned at $c$. 

For each of $i=1,2$, Since $(a_{2},b_{2})$ and $(c_{i},c)$ do not
cross, $c$ and $c_{i}$are on the same side of $a_{2},b_{2}$ by\propref{crossing}.
Combine this for $i=1,2$ we know $c_{1}$ and $c_{2}$ are on the
same side of $a_{2},b_{2}$

This is a contradiction, since from the known order $a_{1}<c_{1}<a_{2}<c_{2}<a_{3}<b_{3}<b_{2}<b_{1}$
it is clear that $c_{1},c_{2}$ are on different side of $(a_{2},b_{2})$
\end{proof}
\corref{half_inter} showed bundles preserve interleaves. The following
proposition shows rainbows may preserve interleaves.
\begin{prop}
\label{prop:rainbow_inter}Let $A$, $B$ be two monotone sequences
of the same direction that $k$-interleave. Suppose $C$ and $D$
are monotone sequences such that $A,C$ form a rainbow, $B,D$ form
a rainbow, and the two rainbows are of the same colour, then $C,D$
$k-2$-interleave as well.
\end{prop}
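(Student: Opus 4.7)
The plan is to pass to length-$k$ subsequences preserving both the strong interleave of $A,B$ and the two rainbows, use same-colour non-crossing to locate each $d_j$ in $L$ almost exactly, and finally read off a strongly interleaving subsequence of $C,D$ of length $k-2$. To begin, by \propref{subsequence-pass}, pass to length-$k$ subsequences of $A$ and $B$ that strongly interleave, together with matching length-$k$ subsequences of $C$ and $D$ for which $A,C$ and $B,D$ remain rainbow of colour $c$; relabel these as $A,B,C,D$ and (reversing $L$ if necessary) assume $A,B$ are increasing, so $C,D$ are decreasing. By \propref{rainbow} each rainbow takes one of two configurations: type~(i), with all $c$'s (resp.\ $d$'s) appearing in $L$ to the right of all $a$'s (resp.\ $b$'s), or type~(ii), its mirror image.

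Next, I analyse the laminar structure of the $2k$ colour-$c$ edges via a depth argument. The $A$-edges $a_i\sim c_i$ form a nested chain of arcs, so every point of $L$ has a well-defined depth in this rainbow (the number of such arcs containing it), and \propref{crossing} forces the two endpoints of any colour-$c$ edge to have equal depth. The strong interleave $a_1<b_1<a_2<\cdots<a_k<b_k$ pins $b_j$ to depth exactly $j$ for each $j<k$, hence constrains $d_j$ into the corresponding depth-$j$ annulus. A case check on the four pairs of rainbow types then rules out the mixed cases (i,ii) and (ii,i): in either of them the depth constraint forces $d_j\in(a_j,b_j)$ for every $j$, but then $d_2>a_2>b_1>d_1$, contradicting monotonicity of $D$. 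In the aligned cases (i,i) and (ii,ii), the same depth analysis combined with $d_1>d_2>\cdots>d_{k-1}$ forces $d_j\in(c_{j+1},c_j)$ for $j=2,\dots,k-1$, with $d_1\in(c_2,c_1)$ except possibly in one boundary subcase where $d_1$ (or, after mirroring, $d_{k-1}$) slips into the $(a,b)$-annulus at the corresponding end.

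Finally, read off the interleave. In the generic subcase the configuration yields the chain $c_k<d_{k-1}<c_{k-1}<\cdots<c_2<d_1<c_1$, so $(c_1,\dots,c_{k-1})$ and $(d_1,\dots,d_{k-1})$ strongly interleave, giving even a $(k-1)$-interleave. In the boundary subcase one further element from each of $C,D$ must be discarded; taking, for instance, $(c_1,\dots,c_{k-2})$ and $(d_1,\dots,d_{k-2})$ still yields a length-$(k-2)$ strong interleave. Either way, $C$ and $D$ are $(k-2)$-interleaving, as required.

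The main obstacle will be the case analysis in the second paragraph: using only \propref{crossing} and the rainbow nesting to exclude the mixed types, and to confine each $d_j$ to one of two short intervals so tightly that monotonicity of $D$ leaves essentially only the one near-canonical arrangement above. Once that laminar picture is in place, extracting the interleave is mechanical.
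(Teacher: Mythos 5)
Your argument is essentially the paper's own: non-crossing of the colour-$c$ edges $b_j\sim d_j$ with the nested same-coloured arcs $a_i\sim c_i$ confines each $d_j$ to one of two intervals in its annulus, and monotonicity (or the global position) of $D$ then forces all but at most one $d_j$ into the $c$-side intervals, which is exactly how the paper extracts the $(k-2)$-interleave after renumbering $B$ via \corref{intermediate} instead of running your four-way case split. The conclusion stands despite two small bookkeeping slips that do not affect correctness: in the mixed case where the $B,D$ rainbow opens away from the $A,C$ rainbow there is no admissible position for $d_j$ at all (so no appeal to monotonicity is needed), and in the boundary subcase the surviving strongly interleaving pair is $(c_2,\dots,c_{k-1})$ with $(d_2,\dots,d_{k-1})$ rather than the indices you name.
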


\begin{proof}
By passing into appropriate subsequences of $A,B$ and $C,D$ (\propref{sub_inter}),
we may assume $A,B,C,D$ all have length $k$ and $A,B$ strongly
interleave. Without loss of generality, we assume $A,B$ are increasing,
and thus $C,D$ are assumed decreasing.

We know either $a_{1}<a_{2}<\cdots a_{k}<c_{k}<\cdots c_{1}$ or $c_{k}<\cdots c_{1}<a_{1}<\cdots a_{k}$.
In the second case, we can swap $A,C$ and swap $B,D$ , reverse the
order of $L$ and reverse enumeration order of all four sequences,
then the second case is reduced to the first case.

We abandon the old enumerations of $B$ and $D$. We use \corref{intermediate}
to find $b_{i}$ $(1\leq i\leq k-1)$ such that $a_{1}<b_{1}<a_{2}<b_{2}\cdots<a_{k-1}<b_{k-1}<a_{k}$.
We also renumber $d$ such that $(b_{1},d_{1})$, $(b_{2},d_{2})\cdots(b_{k-1},d_{k-1})$
form the rainbow.

Now we consider how $(b_{i},d_{i})$ can be positioned. Note that
$(a_{i},c_{i})$ and $(a_{i+1},c_{i+1})$ are edges same colour, and
we already know $a_{i}<b_{i}<a_{i+1}<c_{i+1}<c_{i}$. In order for
$(b_{i},d_{i})$ to not cross, we must have either $d_{i}\in[a_{i},a_{i+1}]$
or $d_{i}\in[c_{i+1},c_{i}]$. But regardless of which case it is,
we still have $a_{i}<d_{i}$. 

Recall that $D$ is decreasing, so $d_{i}<d_{i-1}$ for $2\leq i\leq k-1$.
Therefore $a_{i}<d_{i-1}$, and the option $d_{i-1}\in[a_{i-1},a_{i}]$
is not possible. We must have $d_{i-1}\in[c_{i-1},c_{i}]$. We have
thus shown that $d_{i}\in[c_{i},c_{i+1}]$ for each of $1\leq i\leq k-2$,
and so $c_{1}<d_{1}<c_{2}<d_{2}\cdots<c_{k-2}<d_{k-2}$ and $\{d_{i}\}$
and $\{c_{i}\}$ are sequences of length $k-2$ that strongly interleave.
So $C$,$D$ at least $k-2$ interleave.
\end{proof}
Finally, we unify the ideas into one lemma. 
\begin{lem}
\label{lem:ultra-inter}Let $A_{1},A_{2}\cdots A_{k}$ and $B_{1},B_{2}\cdots B_{k}$
be monotone sequences all of the same length. Suppose in addition 
\begin{enumerate}
\item For each $p$, $A_{p}$ and $B_{p}$ are monotone in the same direction,
\item For each $p$, $A_{p}$ is related to $A_{p+1}$, and $B_{p}$ is
related to $B_{p+1}$, and
\item Either 
\begin{enumerate}
\item $A_{k}$ is fanned. $B_{k}$ is fanned. Both fans are of the same
colour. Or,
\item One of $A_{k},B_{k}$ is rainbow with another sequence $C$. The other
one is fanned. The rainbow and the fan is of the same colour. 
\end{enumerate}
\end{enumerate}
Then $A_{1}$ and $B_{1}$ cannot interleave for a value larger than
$4^{k}\times10$.
\end{lem}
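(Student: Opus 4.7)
The plan is to push the interleaving property through the two chains $A_1\sim\cdots\sim A_k$ and $B_1\sim\cdots\sim B_k$ one step at a time, losing at most a constant factor per step, and then to close the argument at level $k$ using \propref{fan-fan} or \propref{fan-rainbow} on the fan/rainbow configuration assumed in condition (3). This is naturally phrased as an induction on $k$. For the base case $k=1$, condition (3a) puts us in the situation of \propref{fan-fan} (so $A_1, B_1$ cannot even $2$-interleave) and condition (3b) in that of \propref{fan-rainbow} (so they cannot $3$-interleave); in either case the bound $4^1\cdot 10 = 40$ is satisfied with large slack.

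For the inductive step, I would show that any $N$-interleave between $A_1$ and $B_1$ yields at least a $\lceil N/4\rceil$-interleave between $A_2$ and $B_2$. The inductive hypothesis applied to the truncated chains $A_2,\ldots,A_k$ and $B_2,\ldots,B_k$ then forces $\lceil N/4\rceil \le 4^{k-1}\cdot 10$, hence $N\le 4^k\cdot 10$. The propagation splits into two cases by whether $A_1\sim A_2$ is bundled (same direction) or rainbow (opposite direction). Assumption (1) of the lemma, combined with the fact that a related pair determines the direction of one sequence from the direction of the other, forces $B_1\sim B_2$ to be bundled in the bundled case and rainbow in the rainbow case; no mixed configuration is possible.

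In the bundled case, \propref{bundled} gives strong interleaves $A_1$--$A_2$ and $B_1$--$B_2$, and two applications of \corref{half_inter}, first to the triple $(B_1,A_1,A_2)$ and then to $(A_2,B_1,B_2)$, produce the desired $\lceil N/4\rceil$-interleave between $A_2$ and $B_2$. In the rainbow case with matching colours, \propref{rainbow_inter} directly gives an $(N-2)$-interleave between $A_2$ and $B_2$, comfortably within the budget of $N/4$.

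The main obstacle I anticipate is the rainbow subcase in which the uniform-adjacency colour of $(A_1,A_2)$ differs from that of $(B_1,B_2)$, since \propref{rainbow_inter} is stated only for matching colours. I expect this to be dealt with by passing to subsequences and using \corref{intermediate} to locate elements of the $B$-rainbow inside the intervals cut out by the $A$-rainbow, then invoking the non-crossing constraint within each colour class separately to force the elements of $B_2$ into nested intervals relative to the $A$-rainbow, at the cost of an additional constant-factor loss. The bound $4^k\cdot 10$ looks deliberately calibrated to absorb such a factor-$4$ loss at each level, with the additive $10$ covering the base case and the $-2$'s from each rainbow application.
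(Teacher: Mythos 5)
Your proposal follows essentially the same route as the paper's proof: propagate the interleave one level at a time, losing a factor of at most $4$ per level (in the bundled case via \propref{bundled} and the chain $A_{i+1}\sim A_{i}\sim B_{i}\sim B_{i+1}$ with \corref{half_inter}, which is exactly your two applications; in the rainbow case via \propref{rainbow_inter}, losing only $2$), and then close at level $k$ with \propref{fan-fan} or \propref{fan-rainbow}. Your induction-on-$k$ packaging is only a cosmetic difference, and your observation that condition (1) excludes a mixed bundled/rainbow configuration is also the first line of the paper's proof (though note it follows from condition (1) at levels $p$ and $p+1$, not from relatedness ``determining'' a direction).

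The one point of divergence is the mismatched-colour rainbow subcase that you flag as the main obstacle. The paper does not treat it at all: it applies \propref{rainbow_inter} without verifying the same-colour hypothesis, so the lemma is implicitly intended for (and in \lemref{identity-permutation} only applied to) chains where the relation colour of $A_{p}\sim A_{p+1}$ coincides with that of $B_{p}\sim B_{p+1}$, which \propref{main-related} guarantees because relation colours depend only on the shape data $(|A|,|Y|,p,v)$. Your sketched repair for genuinely different colours cannot work: edges of different colours impose no mutual non-crossing constraint, so the $B$-rainbow can be placed entirely outside the span of the $A$-rainbow (say $A_{i+1}$ wholly to the right of everything and $B_{i+1}$ wholly to the left), leaving $A_{i+1}$ and $B_{i+1}$ totally separated with no interleave whatsoever; no constant-factor loss can absorb that. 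So rather than trying to prove that case, the honest fix is to carry the matching-colour condition as an explicit hypothesis (it is satisfied wherever the lemma is invoked), which is what the paper's argument tacitly does.
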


\begin{proof}
Note that condition 1 and 2 together implies either pairs $(A_{p},A_{p+1})$,
$(B_{p},B_{p+1})$ are both bundled, or both rainbow. 

Suppose $A_{i}$ and $B_{i}$ $k$-interleave, we claim that $A_{i+1}$
and $B_{i+1}$ at least $\frac{k}{4}$ interleave. We proceed differently
depending on $(A_{i},A_{i+1})$, $(B_{i},B_{i+1})$ are both rainbow,
or both bundled. If they are both rainbow, then using \propref{rainbow_inter}
we see that $A_{i+1}$ and $B_{i+1}$ are at least $k-2$ interleaving.
If they are instead both bundled , then by \propref{bundled}, $A_{i+1}$
strong interleave with $A_{i}$ and $B_{i+1}$ strong interleave with
$B_{i}$. By considering the chain of strong interleaves $A_{i+1}\sim A_{i}\sim B_{i}\sim B_{i+1}$
and \corref{half_inter}, we see that $A_{i+1}$ and $B_{i+1}$ are
at least $\frac{k}{4}$ interleaving. 

Suppose $A_{1}$ and $B_{1}$ interleave for $4^{k}\times10$ , then
$A_{k}$ and $B_{k}$ at least $10$ interleaves. If condition (3)(a)
is satisfied, a contradiction is reached using \propref{fan-fan}.
If condition (3)(b) is satisfied, then a contradiction is reached
using \propref{fan-rainbow} instead.
\end{proof}
In the case that all relations are bundled, we can have a more convenient
form of \lemref{ultra-inter}.
\begin{lem}
\label{lem:ultra-inter-weak}Let $A_{1},A_{2}\cdots A_{k}$ be monotone
sequences. Suppose in addition that
\end{lem}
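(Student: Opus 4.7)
The plan is to adapt the argument of \lemref{ultra-inter} to the simpler situation in which only a single chain of bundled relations is in play, so that most of the bookkeeping collapses. First I would record the base fact: by \propref{bundled}, every consecutive pair $(A_p, A_{p+1})$ in the chain strongly interleaves, because being bundled (same direction, order consistent, uniformly adjacent) is exactly the hypothesis of that proposition. This replaces the separate $A$-chain and $B$-chain of \lemref{ultra-inter} with one chain of strong interleaves, and so the interleave-propagation lemma \corref{half_inter} can be applied along it directly.

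Next I would iterate \corref{half_inter} to chain the strong interleaves: if the length of each $A_p$ is at least some $N$, then $A_1$ and $A_k$ must $k$-interleave for some $k$ of order $N / 2^{k-1}$ (or the sharper bound $\lceil N/(k-1)\rceil - 1$ from the optimisation proposition between \corref{half_inter} and the subsection on crossing fans). At this point I would invoke whichever endpoint hypothesis is imposed in the lemma: if both $A_1$ and $A_k$ are fanned of the same colour $c$, then \propref{fan-fan} produces a crossing of two $c$-coloured edges, contradicting the stack-layout assumption; if instead one endpoint is fanned and the other participates in a rainbow of colour $c$ with an auxiliary sequence, then \propref{fan-rainbow} is applied in its place. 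The conclusion is then a bound of the form: the common length of the $A_p$ cannot exceed a function of $k$ (explicitly, something like $2^k \cdot 10$, matching the shape of the bound in \lemref{ultra-inter}).

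The main obstacle I expect is the subsequence bookkeeping when a rainbow appears at the end. Strong interleaves only pass cleanly to subsequences on the fanned side (\propref{sub_inter}, \propref{subsequence-pass}(c)), while the rainbow partner needs to be trimmed compatibly using \propref{subsequence-pass}(b). One must be careful that, after passing to a subsequence of $A_k$ on which a fan crosses, the corresponding subsequence of the rainbow partner is still long enough for \propref{fan-rainbow} to apply (the hypothesis $k\geq 3$ there is what forces the bound to be slightly worse than in the all-bundled case). With this checked, the rest is just combining the interleave-preservation bound with the chosen contradiction proposition, exactly as in the final paragraph of the proof of \lemref{ultra-inter}, and one obtains a clean bound depending only on the chain length $k$.
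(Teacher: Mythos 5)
Your proposal is correct and follows essentially the same route as the paper's own proof: consecutive bundled pairs strongly interleave by \propref{bundled}, iterating \corref{half_inter} gives that $A_{1}$ and $A_{k}$ interleave with a value of order $\mathrm{len}/2^{k}$, and then \propref{fan-fan} or \propref{fan-rainbow} yields the contradiction depending on which endpoint hypothesis holds. The extra care you take with subsequence bookkeeping for the rainbow case is sound but not needed beyond what \propref{subsequence-pass} already provides, and it matches the (very terse) argument in the paper.
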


\begin{enumerate}
\item All $A_{i}$ are monotone in the same direction 
\item $A_{i}$ bundles with $A_{i+1}$
\item Either
\begin{enumerate}
\item $A_{1}$ and $A_{k}$ are fanned of the same colour, or 
\item One of $A_{1}$,$A_{k}$ are rainbow with another sequence $C$. The
other is fanned. The rainbow and fan are of the same colour
\end{enumerate}
\end{enumerate}
Then the sequences cannot have length greater than $2^{k}\times10$
\begin{proof}
Similarly, by \propref{bundled} and \corref{half_inter} $A_{1}$
and $A_{k}$ at least $\frac{len}{2^{k}}$ interleave. If this value
is at least $3$, either \propref{fan-fan} or \propref{fan-rainbow}
gives a contradiction.
\end{proof}
In all subsequent arguments, lengths of all sequences are assumed
to be much larger than $4^{s}$ , where $s$ is the maximal length
of the interleave chains we consider, say $4(depth(T))|P|$. Therefore,
we can safely assume conditions of form ``at least $k$- interleave''
or ``length at least $k$'' are always satisfied.

\section{Notations}

\subsection{Start of Proof}

We will start by assuming $T\boxslash P$ has a valid $s$-stack layout,
then derive a contradiction from it. Let $n$ be the height of $T$,
and $m$ is the length of $P$. We will assume $n,m$ to be sufficiently
large with respect to $s$. 

We will work with $T$ being balanced rooted trees. We say $T$ is
a balanced rooted tree with \emph{degree sequence} $(d_{0},d_{1},\cdots d_{n-1})$
in the following situation: $T$ is a rooted tree, the root is of
depth $0$. Every vertex of depth $i$ has exactly $d_{i}$ children
of depth $i+1$. We will assume $n,m$ are sufficiently large compared
to $s$, and that $d_{i}$ to be sufficiently large compared to each
of $d_{i+1},d_{i+2}\cdots d_{n-1},n,m$.

\subsection{Conventions on integer arrays }

We will use integer arrays to index vertices of $T$ and $T\boxslash P$.

Integer arrays will be $1$-indexed, so $A$ will be written as $[a_{1},a_{2}\cdots a_{n}]$.
$|A|$ denotes the size of the array. This is also known as its length.
When working with arrays, capital letters denote an array, and lower
case letters denote single integers or coordinate.

If $A,B$ are two integer arrays, $A+B$ will denote the concatenation
of the two arrays in this order. That is, the $i$-th element of $A+B$
is $a_{i}$ if $i\leq|A|$, and is $b_{i-|A|}$ if $|A|<i\leq|A|+|B|$.
When $B$ consists of a single element $[v]$, we write $A+v$ instead
of $A+[v]$. We may concatenate multiple arrays and integers at one
time, such as $A+x+y+B$.

\subsection{Indexing $T$}

Vertices of $T$ will be indexed by an integer array. The root node
is given the empty array $[]$. A vertex of depth $d$ will be indexed
with an integer array of length $d$. If $v$ is given the index $A$,
then the $d_{depth(v)}$ children of $v$ will be given the indices
$A+1$, $A+2$, $\cdots,A+d_{|A|}$.

We denote by $N_{A}$ the vertex indexed by the array $A$. 

Edges of $T$ can be described as 
\[
N_{A}\sim N_{A+v}
\]

for any integer array $A$ of size between $0$ and $n$ inclusive,
and any $1\leq v\leq c_{|A|}+1$

\subsection{Indexing $T\boxslash P$}

We index vertices on $T\boxslash P$ as follows: Recall that $T\boxslash P$
consists of vertices of form $(v,i)$ where $v\in V(T),i\in[m]$.
For vertices of $T\boxslash P$, we write $(A,i)$ instead of $(N_{A},i)$.
We refer to the value $i$ as the \emph{path position}.

There are three types of edges from the product $\boxslash$. They
are:
\begin{enumerate}
\item (\emph{Vertical}) $(A+v,i)\sim(A,i)$, for any array $A$ with length
between $0$ and $n-1$, and any $v\in[d_{|A|}],i\in[m]$
\item (\emph{Horizontal}) $(A,i)\sim(A,i+1)$, for any array $A$ with length
between $0$ and $n$, and any $i\in[m-1]$
\item (\emph{Diagonal}) $(A+v,i)\sim(A,i+1)$, for any array $A$ with length
between $0$ and $n-1$, and any $v\in[d_{|A|}],i\in[m-1]$
\end{enumerate}
For the rest of the article, we will not mention the assumption $|A|\leq n$
when we write $(A,i)$. By convention, we write edges as $a\sim b$,
such that $a$ has larger depth, or $a$ has smaller path position,
or both. 

\section{Ramsey arguments on $T\boxslash P$\label{sec:ramsey-passes}}

Let $T\boxslash P$ be the given tree. Let's suppose it has a $s$-stack
layout: A linear order $L$ and a colouring $\phi:E(T\boxslash P)\rightarrow[s]$
such that equally \foreignlanguage{british}{coloured} edges do not
cross. Let the degree sequence of $T$ be $(d_{0},d_{1}\cdots d_{n-1})$.

We will make multiple ``pass to subtrees''. This means we replace
$T$ by a subtree $T'$ with a degree sequence $(d_{0}',d_{1}'\cdots d_{n}')$
such that $d_{i}'<d_{i}$.

We want to apply as much Ramsey argument as possible, making children
of each vertex to be ``as equal as possible''. Recall that the three
properties we look for in monotone, order-consistent, and uniformly
coloured in various sequences. These three assumptions can indeed
be made after several passes. 

\subsection{Pass to ensure colouring }

In the following definition, $E_{A,c}$ consists of all edges that
can be considered decedents of $N_{A+c}$: all edges that we have
control over if we delete $N_{A+c}$ from $T$. It consists of all
edges that have at least one endpoint $(B,p)$ such that $B$ have
$A+c$ as a prefix.
\begin{defn}
Let $E_{A,c}$ denotes the union of two collection of edges 

(I) Subtree edges: The sets of edges such that both endpoints are
of the form $(v,i)$ where $v$ is a descendent of $A+c$

(II) Parent edges: The sets of edges such that the endpoints are of
form $(A,i)$ or $(A+c,j)$ for some $i$.

The first kind of edges can be divided according to vertical, diagonal,
horizontal edges. For every array $X$ and integer $g,i$:
\begin{itemize}
\item (1) Vertical edges: $(A+c+X+g,i)\sim(A+c+X,i)$
\item (2) Diagonal edges: $(A+c+X+g,i)\sim(A+c+X,i+1)$
\item (3) Horizontal edges: $(A+c+X+g,i)\sim(A+c+X,i)$
\end{itemize}
The second kind of edges can be divided into two cases 
\begin{itemize}
\item (4) Parent vertical edge: $(A+c,i)\sim(A,i)$
\item (5) Parent diagonal edge: $(A+c,i)\sim(A,i+1)$
\end{itemize}
We define $P_{A}$ to be property that colour of $E_{A,c}=E_{A,c'}$
for any children $c,c'$ of $A$. This means that for example, $\phi((A+c+X+g,i)\sim(A+c+X,i))=\phi((A+c'+X+g,i)\sim(A+c'+X,i))$
for children $c,c'$, array $X$ and integer $g,i$ in the case of
vertical edges. A similar condition must hold in the other four cases.
\end{defn}

Note that the property $P_{A}$ applies to a vertex on $T$, not $T\boxslash P$.
\begin{prop}
\label{prop:pass-colour-weak}We can make a pass to a subtree $T'$
of $T$ such that property $P_{A}$ holds for all nodes $N_{A}\in T'$,
and that the degree sequence of $T'$ remains arbitrarily large.
\end{prop}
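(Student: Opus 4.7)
The plan is to prove this by a top-down inductive pruning, processing depths $d = 0, 1, \ldots, n-1$ of $T$ in order. At the start of step $d$ I will have produced a subtree $T_d$ of $T$ with degree sequence $(d_0', \ldots, d_{d-1}', d_d, d_{d+1}, \ldots, d_{n-1})$ on which $P_A$ holds for every node $N_A \in T_d$ of depth strictly less than $d$; setting $T' := T_n$ then yields the desired subtree.

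At step $d$, I fix any depth-$d$ node $A^*$ of $T_d$ and, for each child index $c \in [d_d]$, consider the coloring signature $\sigma_c := \phi|_{E_{A^*, c}}$, regarded as a map from the canonical index set of $E_{A^*, c}$ into $[s]$ (the canonical identification $E_{A^*, c} \leftrightarrow E_{A^*, c'}$ is the obvious one obtained by swapping the prefix $A^* + c$ with $A^* + c'$). Since $|E_{A^*, c}|$ is a finite quantity depending only on $d_{d+1}, \ldots, d_{n-1}$ and $m$, there are at most $s^{|E_{A^*, c}|}$ possible signatures, and pigeonhole produces a subset $S_d \subseteq [d_d]$ of size at least $d_d / s^{|E_{A^*, c}|}$ on which all $\sigma_c$ coincide. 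I then define $T_{d+1}$ by uniformly pruning every depth-$d$ node $A \in T_d$ to retain only the children $A+c$ with $c \in S_d$.

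The key observation making this uniform pruning legitimate is that the invariant $P_B$ at all $B$ of depth strictly less than $d$ already forces the signature $\phi|_{E_{A, c}}$, for each fixed $c$, to be identical across all depth-$d$ nodes $A$ of $T_d$, by iterating the matching-signature condition along the root-to-$A$ path. Consequently the single pigeonhole choice $S_d$ made at $A^*$ simultaneously witnesses $P_A$ at every other depth-$d$ node after pruning. Moreover, the pruning preserves $P_B$ at each shallower $B$, because every $E_{B, c}$ is restricted via the same index subset independent of $c$, leaving the equality of the (now smaller) signatures intact.

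The main obstacle I anticipate is the careful bookkeeping required to verify that uniform pruning really does preserve matching signatures simultaneously at all depths, which amounts to unwinding the recursive way $E_{A, c}$ decomposes into subtree edges and parent edges and checking that the canonical bijections compose correctly under restriction. Once that is in place the degree bound is straightforward: at each step $d_d$ drops by a factor of at most $s^{|E_{A^*, c}|}$, a fixed function of $d_{d+1}, \ldots, d_{n-1}, m$, so the standing assumption that $d_d$ is chosen much larger than the subsequent degrees and $m$ allows every final $d_d'$ to be made arbitrarily large, as required.
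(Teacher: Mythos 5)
Your proof is correct, but it runs in the opposite direction from the paper's and consequently needs (and supplies) extra ingredients. The paper processes layers bottom-up: at each vertex $A$ of the deepest unprocessed level it pigeonholes independently over the at most $s^{e}$ colourings of $E_{A,c}$ and keeps a majority class of children; because every later pruning happens strictly closer to the root, it either deletes $A$ together with its whole subtree or leaves the subtree of $A$ untouched, so an established $P_{A}$ can never be disturbed and no coordination between distinct vertices of the same depth is ever needed. Your top-down scheme instead prunes uniformly (the same index set $S_{d}$ at every depth-$d$ node), which forces exactly the two additional observations you state: that $P_{B}$ at all shallower $B$ makes the signature of $E_{A,c}$, for fixed $c$, independent of the depth-$d$ node $A$ (via the change-one-coordinate-at-a-time identification, the same device the paper uses later in \propref{pass-colour}), and that uniform pruning preserves $P_{B}$ at shallower nodes because the survival criterion (coordinate $d+1$ lying in $S_{d}$) is invariant under the canonical swap of a coordinate at position at most $d$. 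Both observations are true and your sketches are the right arguments, so the ``bookkeeping obstacle'' you flag is genuinely routine; the cost of your route is precisely this bookkeeping, and its modest payoff is that you obtain along the way a cross-node uniformity of colour signatures that the paper only extracts afterwards in \propref{pass-colour}. The pigeonhole bound and the degree-sequence accounting are the same in both proofs.
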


\begin{proof}
For a fixed, $A$, The set of edges $E_{A,c}$ have a bounded number
of edges, , say no more than $e=6m\times\prod_{j=i+1}^{n-1}(d_{j}+1)$.
There is at most $s^{e}$ ways to colour each $E_{A,c}$

At least $\frac{d_{i}}{s^{e}}$ children of $A$ have the subtree
$E_{A,c}$ coloured in the same way. By keeping only these children,
we have achieved property $P_{A}$ for $A$, by replacing $d_{|A|}$
with $d_{|A|}/s^{e}$.

We perform this process one layer at a time. In the $i-$th step,
we perform this for all vertex of level $n-i$ . Note that we only
need to replace $d_{n-i}$ by $d_{n-i}/s^{E}$ once. 

Since we assumed $d_{1},d_{2}\cdots,d_{n}$ are sufficiently large
to begin with , we can obtain the new $d_{1}',d_{2}',\cdots d_{n}'$
to be arbitrarily large.

After such a pass have been completed for all vertex, the property
$P_{A}$ is satisfied for all vertex $A\in V(T)$.
\end{proof}
When $P_{A}$ holds for all $A$, we get the following convenient
conclusion:
\begin{prop}
\label{prop:pass-colour}We can make a pass to a subtree $T'$ of
$T$ such that the following properties hold:
\end{prop}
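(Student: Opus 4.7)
The plan is to apply \propref{pass-colour-weak} once to obtain a subtree $T'$ on which $P_A$ holds at every node $A$, and then read the stated properties off as immediate consequences. Since the weak version already produces the required subtree with an arbitrarily large degree sequence, no further Ramsey step is needed; what remains is a bookkeeping unpacking of the content of $P_A$ at the level of individual edges.

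The bulk of the work is the following colour-transport argument. Suppose we want to compare the colours of two edges $e_1, e_2$ whose ``type data'' agrees, meaning the same classification among the five kinds of edges listed in the definition of $E_{A,c}$ (vertical, diagonal, horizontal subtree edges, together with the parent-vertical and parent-diagonal edges), the same path position $i$, the same endpoint depths, and identical structural offsets past the most recent common ancestor of their index arrays. I would locate the longest common prefix $A$ of the index arrays describing $e_1$ and $e_2$ and exhibit distinct children $c \neq c'$ of $N_A$ with $e_1 \in E_{A,c}$ and $e_2 \in E_{A,c'}$; then $P_A$ delivers $\phi(e_1) = \phi(e_2)$ in one shot. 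If the two index arrays differ in more than one coordinate, I would iterate this argument along the chain of first differences, each step contributing one colour equality via some $P_{B}$, and compose them to conclude.

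The main obstacle I anticipate is the careful treatment of the parent edges (cases (4) and (5) in the definition of $E_{A,c}$): such an edge $(A+c, i) \sim (A, i)$ straddles two depths, and one has to check that the sibling permutation guaranteed by $P_A$ does indeed transport it to $(A+c', i) \sim (A, i)$ with the same colour. This is built into the definition of $E_{A,c}$ through the parent-edge clauses, but is worth making explicit so that the coloured identification between sibling subtrees really is an isomorphism of coloured graphs that fixes the ``upward'' boundary. Once this case analysis is done, the statement of \propref{pass-colour} reduces to a clean family of colour equalities indexed by edge type, depth of the lower endpoint, and path position $i$, which is exactly the form needed to feed into the subsequent pass on relative order in \propref{pass-order}.
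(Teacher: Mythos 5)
Your proposal is correct and essentially identical to the paper's own argument: apply \propref{pass-colour-weak} once so that $P_{A}$ holds at every node, then transport colours between edges of the same type, depth and path position by changing one coordinate of the index array at a time, each single-coordinate step being justified by the appropriate clause of some $P_{B}$ (the subtree clauses (1)--(3) when the change is above the bottom coordinate, the parent-edge clauses (4)--(5) when only the child coordinate differs), and composing along the chain. The paper's proof is exactly this chain-of-single-coordinate-changes argument, so there is nothing to add.
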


\begin{enumerate}
\item Colour of $(A+v,i)\sim(A,i)$ only depends on $|A|$ and $i$. In
particular, it does not depend on values of $A$ nor $v$. In other
words, for every $i$, $(A,i)\sim(A+v,i)$ and $(B,i)\sim(B+w,i)$
are coloured the same whenever $A,B$ is of the same length.
\item Colour of $(A,i)\sim(A,i+1)$ only depends on $|A|$ and $i$.
\item Colour of $(A+v,i)\sim(A,i)$ only depends on $|A|$ and $i$.
\end{enumerate}
\begin{rem*}
In summary, the edge colours only depend on type, path position and
depth. We encode the colouring as $C(d,p,t)$ where $t\in\{\text{"Vertical","Horizontal","Diagonal"}\}$.
As a convention, we choose $d$ to be the larger depth of the two
endpoints, and $p$ to be smaller path positions of the two endpoints.
\end{rem*}
\begin{proof}
By making the pass according to \propref{pass-colour-weak}, we can
assume that the property $P_{A}$ holds for all $A\in V(T)$.

\subparagraph*{Proof of 1}

Let $A,B$ be two array of the same length, we should show that $(A,i)\sim(A+v,i)$
and $(B,i)\sim(B+w,i)$ are of the same colour.

Let's first show $(A,i)\sim(A+v,i)$ and $(B,i)\sim(B+w,i)$ have
the same colour whenever $A+v$ and $B+w$ differs in one coordinate.

If they only differ in $v$ and $w$, (i.e. $A=B)$, the claim follows
from equal colouring of edges of type (4) under $P_{A}$.

If instead $A\neq B$ and differ in one coordinate, we write $A=C+a+X$
, $B=C+b+X$. The edges belong to type (1) under $P_{C}$, so claim
follows.

For the general case of $A$ and $B$ , consider some $A=A_{0}-A_{1}-A_{2}-\cdots-A_{k}=B$
where each $A_{i}$ and $A_{i+1}$ differ in one coordinate. The above
claim shows that $(A_{j},i)\sim(A_{j}+v,i)$ have the same colour
as $(A_{j+1},i)\sim(A_{j+1}+v,i)$ . Combining this for various $j$
, we see $(A,i)\sim(A+v,i)$ and $(B,i)\sim(B+w,i)$ have the same
colour.

\subparagraph*{Proof of 2}

This case is analogous to Case I, but they fall under type (5) and
(2) respectively instead.

\subparagraph*{Proof of 3 }

Similar to case $1$, to establish the statement for general $A,B$,
it suffices to show it for whenever $A,B$ differs in exactly one
coordinate.

Write $A=C+a+X$, $B=C+b+X$ , $a\not=b$, then $(C+a+X,i)\sim(C+a+X,i+1)$
and $(C+b+X,i)\sim(C+b+X,i+1)$ have the same colour from equal colouring
of (3) under $P_{C}$.
\end{proof}
With this proposition, all colours of edges $T\boxslash P$ can be
read off from $C(d,p,t)$. We can regard the colouring information
to be exactly a $s$-colouring on the hexagonal grid of size $n\times m$.
This will be described in \ref{sec:hexgaonal}.

\subsection{Pass to ensure relative order }

The total order $L$ defines a permutation $\pi$ on all vertex of
$T\boxslash P$. 

Let $S\subset T\boxslash P$, then $L$ defines a unique sub-permutation
$\pi'$ on the $S$ vertex given by 
\[
\pi'(a)<\pi'(b)\iff\pi(a)<\pi(b)\text{ for }a\in S,b\in S
\]

\begin{defn}
For a vertex $A\in V(T)$, and a child $c$ of $A$, let $\pi_{A,c}$
to be the induced permutation of $L$ on $\{(A+c+X,i)|\text{X an integer array of any length (possibly empty)},(A+c+X)\in T,i\in[m]\}$.
In other words, $\pi_{A,c}$ is the induced permutation on all vertexes
on $T\boxslash P$ that can be considered a descendent of $N_{A+c}$,
across various path position.
\end{defn}

\begin{defn}
Let $Q_{A}$ to be the following property: 

$\pi_{A,c}=\pi_{A,c'}$ for any $c,c'$, where an element $(A+c+X,i)$
considered by $\pi_{A,c}$ is naturally corresponding to $(A+c'+X,i)$
in $\pi_{A,c'}$.

In order words, $(A+c+X,i)<(A+c+Y,j)$ iff $(A+c'+X,i)<(A+c'+Y,j)$
for any $(X,Y,i,j,c,c')$.

Note that $Q_{A}$ is a property applicable to a vertex on $T$, not
$T\boxslash P$.
\end{defn}

\begin{prop}
\label{prop:pass-order-weak}We can make a pass to a subtree $T'$
of $T$ such that $Q_{A}$ holds for all $A$. The degree sequence
of $T'$ remains arbitrarily large.
\end{prop}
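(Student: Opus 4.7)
The plan is to mirror the bottom-up pigeonhole argument of \propref{pass-colour-weak}, replacing the count of possible edge colourings with the count of possible induced permutations. For a vertex $A$ at depth $l$ and any child $c$ of $A$, the permutation $\pi_{A,c}$ acts on the set of vertices $(A+c+X,i) \in V(T\boxslash P)$, which has size at most $N_l := m \cdot \prod_{j=l+1}^{n-1}(d_j+1)$. Consequently $\pi_{A,c}$ takes at most $N_l!$ possible values.

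I would process the layers of $T$ bottom-up, from level $n-1$ down to level $0$. At each level $l$, for each vertex $A$ at that level, apply pigeonhole to the $d_l$ children of $A$: at least $d_l/N_l!$ of them share a common induced permutation $\pi_{A,c}$. Retain these children and discard the rest, then relabel the retained children of each level-$l$ vertex as $1, 2, \ldots, d'_l$, where $d'_l$ is taken to be the minimum retained count across all vertices at level $l$ so that the pruned tree remains balanced. By construction, $Q_A$ then holds for every vertex $A$ at level $l$ (under the relabeled indices).

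The verification I consider most in need of care is that pruning at level $l$ does not spoil the $Q$-properties already established at deeper levels. This is immediate: $Q_{A'}$ for a deeper vertex $A'$ only asserts equality of induced permutations on the subtrees rooted at $A'$'s own children, and pruning at level $l$ either removes entire subtrees wholesale (those whose level-$l$ ancestor is discarded) or leaves them untouched. In neither case are the surviving induced permutations relevant to $Q_{A'}$ altered. Note also that different level-$l$ vertices may independently retain different original indices; this is fine because the relabeling normalises them before we move up a layer.

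Since each $d_l$ was assumed sufficiently large compared to $m, d_{l+1}, \ldots, d_{n-1}$, and the reduction factor $N_l!$ depends only on these smaller quantities (using the already-reduced $d'_j$ for $j>l$ only makes $N_l$ smaller), the new degree $d'_l \geq d_l/N_l!$ remains arbitrarily large. I do not anticipate a serious obstacle; the only non-routine point is confirming the independence of the pigeonhole applications at distinct level-$l$ vertices, handled exactly as in the proof of \propref{pass-colour-weak}.
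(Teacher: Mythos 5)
Your proposal is correct and follows essentially the same route as the paper: a bottom-up, layer-by-layer pigeonhole over the at most $N_l!$ possible induced permutations, keeping $d_l/N_l!$ children per vertex and using the hierarchy of largeness assumptions on the $d_i$ to keep the degree sequence arbitrarily large. Your explicit remarks on rebalancing/relabeling and on why pruning at a level does not disturb $Q_{A'}$ at deeper levels are points the paper leaves implicit, but they do not constitute a different argument.
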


\begin{proof}
For a fixed $A$, the number of vertex involved in $\pi_{A,c}$ is
bounded by, say, $v=m\times\prod_{j=i+1}^{n-1}(d_{j}+1)$. 

Therefore, there is no more than $v!$ options for $\pi_{A,c}$. At
least $\frac{d_{|A|}}{v!}$ of them have the same permutation $\pi_{A,c}.$
We only keep these $\frac{d_{|A|}}{v!}$ children. Thus by replacing
$d_{|A|}$ by $\frac{d_{|A|}}{v!}$ , property $Q_{A}$ holds.

We perform this process one layer at a time. In the $i$-th step,
we perform this process for all $v$ of depth $n-i$ simultaneously.
Afterwards, $Q_{A}$ is satisfied for any $|A|=n-1$. Note that we
only need to replace $d_{n-1}$ by $\frac{d_{n-1}}{v!}$ once. 

Note that by assuming $d_{1},d_{2}\cdots,d_{n}$ are sufficiently
large to begin with , we can obtain the new $d_{1}',d_{2}',\cdots d_{n}'$
to be arbitrarily large.

After such a pass have been completed for all vertex, the property
$Q_{A}$ is satisfied for all vertex $A\in V(T)$.
\end{proof}
When $Q_{A}$ holds for all $A$, we get the following convenient
conclusion:
\begin{prop}
\label{prop:pass-order} We can make a pass to a subtree $T'$ of
$T$ such that the following property holds:

For every four arrays $A,B,X,Y$ and two path positions $i,j$ , such
that $A$ and $B$ are of the same length, then ($X$, $Y$ may be
of different length)

$(A+X,i)<(A+Y,j)$ iff $(B+X,i)<(B+Y,j)$.
\end{prop}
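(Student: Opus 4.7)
The plan is to first apply Proposition~\ref{prop:pass-order-weak} to pass to a subtree $T'$ on which $Q_A$ holds simultaneously for every $A \in V(T')$, while keeping the degree sequence arbitrarily large. The conclusion we need is then a global statement about $T'$ obtained by "chaining" the per-vertex properties $Q_A$, in direct analogy with the proof of Proposition~\ref{prop:pass-colour}. Once $Q_A$ holds for every $A$, the proposition becomes a pure bookkeeping exercise about concatenating common prefixes.

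The key reduction is to Hamming distance on arrays of equal length. I would first prove the one-coordinate case: suppose $A, B$ are arrays of the same length differing in exactly one position, so we may write $A = C + a + Z$ and $B = C + b + Z$ with $a, b$ two distinct children of $N_C$ and $Z$ a common suffix. Then for any arrays $X, Y$ and path positions $i, j$, both $A+X$ and $A+Y$ begin with $C+a$ and both $B+X$ and $B+Y$ begin with $C+b$, so the two vertices $(A+X,i)$ and $(A+Y,j)$ lie in the domain of $\pi_{C,a}$, while $(B+X,i)$ and $(B+Y,j)$ lie in the domain of $\pi_{C,b}$, in the natural correspondence $(C+a+W,k) \leftrightarrow (C+b+W,k)$ with $W \in \{Z+X, Z+Y\}$. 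Property $Q_C$ says $\pi_{C,a} = \pi_{C,b}$ under exactly this correspondence, so $(A+X,i) < (A+Y,j)$ iff $(B+X,i) < (B+Y,j)$, as required.

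To handle general $A, B$ of equal length, I would interpolate via a sequence $A = A_0, A_1, \ldots, A_k = B$ in which consecutive arrays differ in exactly one coordinate (for instance, change the coordinates one at a time from left to right). Each step $A_t \to A_{t+1}$ is a one-coordinate change, and the common prefix $C_t$ for this step is a vertex of $T'$, so $Q_{C_t}$ is available. Applying the one-coordinate case at each step gives a chain of biconditionals
\[
(A_0+X,i) < (A_0+Y,j) \iff (A_1+X,i) < (A_1+Y,j) \iff \cdots \iff (A_k+X,i) < (A_k+Y,j),
\]
which composes to the statement for $A = A_0$ and $B = A_k$.

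The only mild subtlety, and the place I would be most careful, is the correspondence in the definition of $Q_C$: one must check that when we apply $Q_C$ with witnesses $(C+a+Z+X, i)$ and $(C+a+Z+Y, j)$, the naturally corresponding vertices under the identification $\pi_{C,a} \equiv \pi_{C,b}$ are indeed $(C+b+Z+X, i)$ and $(C+b+Z+Y, j)$, that is, only the single coordinate immediately after $C$ changes while the suffix and path position are preserved. This is built into the definition of $Q_C$, so no real work is needed beyond unwinding notation; the rest of the argument is purely combinatorial and involves no further Ramsey-type pigeonholing.
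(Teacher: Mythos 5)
Your proposal is correct and follows essentially the same route as the paper: reduce to the case where $A$ and $B$ differ in a single coordinate, apply $Q_C$ for the common prefix $C$ with the natural correspondence of descendants, and then chain through a sequence of arrays each differing in one coordinate. The paper's own proof does exactly this, so no further comment is needed.
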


\begin{rem*}
We will apply this lemma by specifying a 6-tuple $(A,B,X,Y,i,j)$.
\end{rem*}
\begin{proof}
Directly from $Q_{A}$, we obtain that for any $(X,Y,i,j,c,d)$ :
$(A+c+X,i)<(A+c+Y,j)$ iff $(A+d+X,i)<(A+d+Y,j)$. 

Now it remains to conclude the ordering in the described form. 

Given $A,B,X,Y,i,j$ with $|A|=|B|$. Suppose $A$ and $B$ differs
in one coordinate only. Write $A=I+c+J$, $B=I+d+J$. Apply the above
claim using $Q_{I}$ and the 6-tuple $(J+X,J+Y,i,j,c,d)$. We obtain
$(I+(c+J+X),i)<(I+(c+J+Y),j)$ iff $(I+(d+J+X),i)<(I+(d+J+Y),j)$.
Thus, the proposition have been established in the case that $A$
and $B$ differs in exactly one coordinate.

For general $A,B$ , we consider some chains of array $A=A_{0}-A_{1}-A_{2}\cdots-A_{k}=B$,
such that $A_{i}$ and $A_{i+1}$ differs in exactly one coordinate.
By applying the preceding paragraph to each of $(A_{i},A_{i+1})$,
we see $(A_{i}+X,i)<(A_{i}+Y,j)$ iff $(A_{i+1}+X,i)<(A_{i+1}+X,i)$.
Together, this implies $(A+X,i)<(A+Y,j)$ iff $(B+X,i)<(B+Y,j)$.
\end{proof}
In the rest of the proof, we will use this property to show they are
various sequences are order-consistent. 

\subsection{Pass to ensure monotone and lexicographical ordering}

We use $*$ to notate a sequence of vertices of $T\boxslash P$ as
follows. Notation of a sequence consists of exactly one $*$. The
$*$ means this value is the only difference between the element of
the sequence, and is understood to be taken from $1$ to some maximum,
either $d_{i}$ or $m$. For example $([1,*],3)$ refers to the sequence
$([1,1],3),([1,2],3),([1,3],3)\cdots$.

By using a Ramsey argument via a generalization of Erdos-Szekeres
theorem for multidimensional array, we can assume a strong condition
that the vertex of the same layer are lexicographically ordered. We
follow the notations in \cite{erdosgoodbound}.
\begin{defn*}
(Lex-monotone array) \label{def:lex-monotone}A $d$-dimensional array
$f$ is said to be \emph{lex-monotone} if there exist a permutation
$\sigma:[d]\rightarrow[d]$ and a sign vector $s\in\{INC,DEC\}^{d}$
such that

Let $i$ be the smallest integer such that position $\sigma(i)$ of
${\bf x}$ and ${\bf y}$ differ, then 

if $s(i)=INC$ , then $f({\bf x})<f({\bf y})$ iff ${\bf x}_{\sigma(i)}<{\bf y}_{\sigma(i)}$

if $s(i)=DEC$, then $f({\bf x})>f({\bf y})$ iff ${\bf x}_{\sigma(i)}<{\bf y}_{\sigma(i)}$
\end{defn*}
\begin{fact}
\label{fact:normal-monotone}If $f$ is a lex-montone array, then
the sequence $f(a_{1},a_{2},\cdots,a_{k-1},*,a_{k+1},\cdots a_{d})$
is monotone. Moreover, the direction of monotone only depends on the
position of $*$ , and does not depends on values of $a_{i}$.
\end{fact}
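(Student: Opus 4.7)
The plan is to unpack the definition of lex-monotone directly: given a fixed position $k$ and fixed values $a_i$ for $i \neq k$, any two elements of the sequence $f(a_1, \ldots, a_{k-1}, *, a_{k+1}, \ldots, a_d)$ are tuples ${\bf x}, {\bf y} \in [\ldots]^d$ that agree on every coordinate except coordinate $k$. Because $\sigma$ is a permutation of $[d]$, there is a unique index $j = \sigma^{-1}(k)$, and the positions $\sigma(1), \ldots, \sigma(j-1)$ are all coordinates distinct from $k$, hence ${\bf x}$ and ${\bf y}$ agree at those positions. Therefore the smallest $i$ such that ${\bf x}_{\sigma(i)} \neq {\bf y}_{\sigma(i)}$ is exactly $i = j$.

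Plugging this into the lex-monotone clause: if $s(j) = INC$, then $f({\bf x}) < f({\bf y})$ iff ${\bf x}_k < {\bf y}_k$, so the sequence is monotone increasing in the $*$ slot; if $s(j) = DEC$, then $f({\bf x}) > f({\bf y})$ iff ${\bf x}_k < {\bf y}_k$, so the sequence is monotone decreasing. Either way, the sequence is monotone.

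For the second assertion, note that the direction is determined solely by the value $s(\sigma^{-1}(k)) \in \{INC, DEC\}$. Since $\sigma$ and $s$ are fixed by the lex-monotone structure of $f$, this value depends only on $k$ and not on the choice of the fixed coordinates $a_i$ ($i \neq k$). This establishes that the monotone direction of the $k$-th slice is a function of $k$ alone.

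There is essentially no obstacle here; the statement is a direct consequence of restricting the lex-comparison to pairs that differ in a single coordinate. The only point requiring care is observing that agreement on all positions except $k$ forces the ``first disagreement index in the $\sigma$-ordering'' to be precisely $\sigma^{-1}(k)$, which is then immediate from $\sigma$ being a permutation.
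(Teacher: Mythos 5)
Your proof is correct and follows essentially the same route as the paper: restrict the lex-comparison to tuples differing only in coordinate $k$ and read off the sign. You are in fact slightly more careful than the paper's one-line proof, which states the direction as $s(k)$ where it should be $s(\sigma^{-1}(k))$ as you correctly identify; the two agree once $\sigma$ is shown to be the identity later in the paper, and either way the direction depends only on the position of $*$.
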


\begin{proof}
The direction of $f(a_{1}\cdots a_{k-1},*,a_{k+1}\cdots,a_{d})$ is
exactly $s(k)$.
\end{proof}
The following result can be found in \cite{erdosgoodbound}.
\begin{thm}[Erdos-Szekeres theorem for lex-monotone]
For any $d$ and $n$, there exist an integer $L_{d}(n)$ such that
for any $N\geq L_{d}(n)$, any $N\times N\times....\times N$, $d$-dimensional
integer array have a $n\times n\times\cdots n$ lex-montone subarray.
\end{thm}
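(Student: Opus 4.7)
The proof goes by induction on the dimension $d$. The base case $d=1$ is the classical Erd\H{o}s--Szekeres theorem: any integer sequence of length $(n-1)^{2}+1$ contains a monotone subsequence of length $n$, which is exactly a lex-monotone $1$-dimensional array with $\sigma$ the trivial permutation and $s\in\{\mathrm{INC},\mathrm{DEC}\}$. Set $L_{1}(n)=(n-1)^{2}+1$.

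For the inductive step, assume $L_{d-1}$ exists and let $f:[N]^{d}\to\mathbb{Z}$ for $N$ very large. I would ``peel off'' the last coordinate in two stages. \emph{Stage 1 (uniformise fibres):} for each ${\bf x}\in[N]^{d-1}$ apply classical Erd\H{o}s--Szekeres to the sequence $f({\bf x},\cdot)$ to obtain a monotone subsequence of length $M$, encoded as a pair $(S_{{\bf x}},\delta_{{\bf x}})\in\binom{[N]}{M}\times\{\mathrm{INC},\mathrm{DEC}\}$; this is a colouring of $[N]^{d-1}$ by $2\binom{N}{M}$ colours. A product-Ramsey (Gallai-type) lemma for combinatorial sub-boxes---itself proved by iterating pigeonhole one coordinate at a time---then produces a sub-box $J_{1}\times\cdots\times J_{d-1}$ of side $M'$ on which the colouring is constant, with common value $(I_{d},\delta_{d})$, so that every fibre $f({\bf x},\cdot)|_{I_{d}}$ is monotone in the same direction $\delta_{d}$. \emph{Stage 2 (apply the inductive hypothesis):} fix any $y^{\star}\in I_{d}$, let $g({\bf x})=f({\bf x},y^{\star})$ on $J_{1}\times\cdots\times J_{d-1}$, and choose $M'\geq L_{d-1}(n)$ so the inductive hypothesis yields a lex-monotone sub-array $I_{1}\times\cdots\times I_{d-1}$ of $g$ with permutation $\sigma'$ of $[d-1]$ and sign vector $s'$. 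The natural candidate global structure is to extend via $\sigma(i)=\sigma'(i)$ for $i<d$, $\sigma(d)=d$, and $s=(s',\delta_{d})$.

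The main obstacle will be verifying that this candidate really is lex-monotone for $f$ on $I_{1}\times\cdots\times I_{d}$. For two points agreeing on the first $d-1$ coordinates the condition reduces to Stage 1. When the first $d-1$ coordinates differ, however, the lex-comparison under $(\sigma,s)$ ignores the $d$-th coordinate, so one needs $f({\bf x},x_{d})<f({\bf y},y_{d})$ for \emph{every} $x_{d},y_{d}\in I_{d}$ whenever ${\bf x}<_{\mathrm{lex}}{\bf y}$, not merely at $x_{d}=y_{d}=y^{\star}$; this can fail if the spread of the fibre $f({\bf x},\cdot)|_{I_{d}}$ exceeds the gap between $g({\bf x})$ and $g({\bf y})$. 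I would handle this by inserting, between Stages 1 and 2, an extra Ramsey pass $3$-colouring each ordered pair $({\bf x},{\bf x}')$ of distinct points in the current sub-box according to whether the value-sets $\{f({\bf x},y):y\in I_{d}\}$ and $\{f({\bf x}',y):y\in I_{d}\}$ are separated below, separated above, or overlap, and then using a further product-Ramsey restriction to make this colouring constant across the sub-box. In the two separated cases the fibre intervals are pairwise disjoint and the lex-structure of $g$ propagates cleanly to all of $f$, closing the induction. The overlap case is the genuine subtlety: it forces $\sigma(d)\neq d$ and one must either reattempt the peel-off with a different choice of coordinate to play the role of ``lowest priority'', or strengthen the inductive hypothesis to a simultaneous version for a family of scalar functions on $[N]^{d-1}$ so that both endpoints $\min_{y}f(\cdot,y)$ and $\max_{y}f(\cdot,y)$ become lex-monotone with the same $(\sigma',s')$. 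This accumulates most of the growth of $L_{d}(n)$ in $d$.
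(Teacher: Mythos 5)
A preliminary remark: the paper does not prove this theorem at all --- it is imported as a black box from \cite{erdosgoodbound} (the qualitative statement goes back to Fishburn--Graham \cite{erdos}) --- so your outline has to stand entirely on its own, and as it stands it has two genuine gaps. The first is already in Stage 1: the colour you attach to a base point ${\bf x}$ records the index set $S_{{\bf x}}\in\binom{[N]}{M}$, so the number of colours is of order $N^{M}$ and grows with $N$, while an iterated-pigeonhole product-Ramsey argument only returns a monochromatic sub-box of side $M'$ when the number of colours is bounded independently of the ground set (even the one-dimensional pigeonhole $N\geq c\,(M'-1)+1$ is vacuous when $c\sim N^{M}$). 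This step is repairable --- first fix a sub-box of bounded side in the first $d-1$ coordinates, run classical Erd\H{o}s--Szekeres fibre by fibre over its boundedly many fibres while shrinking one common last-coordinate index set, and only then colour base points by the direction alone (two colours) --- but as written the pigeonhole does not go through.

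The second gap is the one you flag yourself, and it is not a corner case: it is the content of the theorem. In a lex-monotone array the least significant coordinate is exactly one whose fibres are monotone, of a common direction, and pairwise \emph{separated} in value; so what your induction step needs is precisely that on some large sub-box \emph{some} coordinate can be made least significant in this sense, and that is essentially a restatement of what is to be proved. Your two escape routes are not arguments: ``reattempt the peel-off with a different coordinate'' comes with no reason why any coordinate should succeed on a common large sub-box nor why the process terminates (if the true priority permutation puts coordinate $d$ first, every fibre in direction $d$ interleaves with its neighbours, and the local overlap data does not tell you which coordinate to demote instead), and ``strengthen the inductive hypothesis'' is not formulated, let alone proved. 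The intermediate Ramsey pass is also overstated: a colouring of ordered pairs of grid points cannot be made constant on a sub-box; at best a canonical/product Ramsey argument makes it depend only on the relative-position pattern of the pair, after which you must still confront the case in which every pattern class is ``overlap''. So the proposal correctly isolates the crux of the Fishburn--Graham/Buci\'{c}--Sudakov--Tran argument (the base case and the fully separated case are fine), but the decisive case is left open, and the known proofs resolve it by a genuinely different and more careful induction than the one sketched here.
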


\begin{prop}
\label{prop:pass-lex-weak}We can pass to a subtree, such that all
vertex of the same level and same path position forms a lex-montone
array.

More formally, for fixed length $len$ and $p\in[m]$, the $len$-dimensional
array 

\[
L([*,*,*,....,*],p)
\]
forms a lex-montone array. The degree sequence of the subtree can
remain arbitrarily large.
\end{prop}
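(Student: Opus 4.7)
The plan is to apply the multidimensional Erd\H{o}s--Szekeres theorem iteratively, once for each pair $(\mathit{len}, p)$ with $\mathit{len}\in\{1,\ldots,n\}$ and $p\in[m]$. For fixed $\mathit{len}$ and $p$, the values $L([v_{1},v_{2},\ldots,v_{\mathit{len}}],p)$ as $(v_{1},\ldots,v_{\mathit{len}})$ ranges over $[d_{0}]\times\cdots\times[d_{\mathit{len}-1}]$ form a $\mathit{len}$-dimensional integer array. Taking $N$ to be the minimum of the current $d_{j-1}$ and appealing to the stated Erd\H{o}s--Szekeres theorem, I would obtain subsets $S_{j}\subseteq[d_{j-1}]$, each of some arbitrarily large size $n'$, such that the subarray indexed by $S_{1}\times\cdots\times S_{\mathit{len}}$ is lex-monotone. (The case $\mathit{len}=0$ is vacuous.)

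To translate this into a pass on the tree, for each $j\le\mathit{len}$ I would restrict the tree so that every vertex at depth $j-1$ retains only those children whose index belongs to $S_{j}$, then reindex each surviving child set as $[|S_{j}|]$. This is a uniform restriction identical at every vertex of the relevant depth, so it defines a valid pass to a subtree, and leaves the degree sequence arbitrarily large by assumption that each $d_{i}$ dominates all constants coming from deeper levels.

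I would then repeat this for each of the finitely many pairs $(\mathit{len},p)$, one at a time. The crucial observation justifying the iteration is that lex-monotonicity is preserved under restriction of any dimension to a subset: the defining permutation $\sigma$ and sign vector $s\in\{\text{INC},\text{DEC}\}^{\mathit{len}}$ continue to witness the property on any $S_{1}'\times\cdots\times S_{\mathit{len}}'$ with $S_{j}'\subseteq S_{j}$. Hence subsequent passes, which only further restrict the children retained at each depth, never destroy a lex-monotone property already established for an earlier pair.

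The main obstacle, such as it is, is just the book-keeping on the degree sequence: each pass divides some $d_{j-1}$ by a factor of order $L_{\mathit{len}}(n')$, and at most $n\cdot m$ such passes are made in total. Since the initial $d_{i}$ were chosen to dominate all later parameters (including the tower produced by these applications of the theorem, and the bounds $n,m,s$), the resulting $d_{i}'$ remain arbitrarily large, as the proposition requires. No new geometric idea is needed beyond the Erd\H{o}s--Szekeres input and the analogous tree-pass machinery already used in \propref{pass-colour-weak} and \propref{pass-order-weak}.
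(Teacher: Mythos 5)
Your proposal is correct and follows essentially the same route as the paper: for each of the finitely many pairs $(\mathit{len},p)$, apply the multidimensional Erd\H{o}s--Szekeres theorem to the array $L([*,\ldots,*],p)$ and pass to the corresponding sub-box of children, using that the initial degrees dominate all later parameters so the degree sequence stays arbitrarily large. Your explicit remarks that the restriction is uniform across each level and that lex-monotonicity survives further restriction of each dimension are details the paper leaves implicit, but they do not change the argument.
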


\begin{proof}
For each level $i$ from $0$ to $n-1$ , suppose we want the new
$T$ to have degree sequence $d_{0}',d_{1}'\cdots d_{n-1}'$ then
as long as $d_{0},d_{1}\cdots d_{i}$ are all greater than $L_{i+1}(\max_{j}d_{j}')$,
then we can pass it into a subset of the children, such that $L([*,*,*,....,*],p)$
is lex-montone. 

After performing this procedure for $m\times n$ times, once for each
level and path position , the desired lex-montone property holds for
every level and path position.
\end{proof}
\begin{cor}
\label{cor:pass-lex-basic}After passing to subtree as in \propref{pass-lex-weak},
any sequences of form $(A+*+B,p)$ are monotone.
\end{cor}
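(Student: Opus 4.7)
The plan is to observe that the corollary is essentially an immediate specialization of \propref{pass-lex-weak} combined with \factref{normal-monotone}. Here is how I would unwind it.

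First, given a sequence of the form $(A+*+B, p)$, I would set $len := |A| + 1 + |B|$; this is the fixed depth at which all vertices in the sequence live. By \propref{pass-lex-weak}, after the pass we may assume that for this particular value of $len$ and for this particular path position $p$, the $len$-dimensional array
\[
f(x_1, x_2, \ldots, x_{len}) := L\bigl([x_1, x_2, \ldots, x_{len}], p\bigr)
\]
is lex-monotone. I would simply apply this to the slice obtained by fixing the first $|A|$ coordinates to the entries of $A$, fixing the last $|B|$ coordinates to the entries of $B$, and letting the $(|A|+1)$-st coordinate vary; this is exactly the sequence $(A+*+B, p)$.

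Now I would invoke \factref{normal-monotone} applied to $f$ with the $*$ in position $k = |A|+1$. The fact immediately gives that the resulting one-parameter slice is a monotone sequence in $L$, with the direction determined by the sign $s(|A|+1)$ of the lex-monotone structure. Since the pass in \propref{pass-lex-weak} was performed once for every pair $(len, p)$, the lex-monotone hypothesis is available for whichever depth and path position the given $A, B, p$ pick out, so this argument goes through for every choice of $A, B, p$ uniformly.

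There is no real obstacle here; the statement is set up precisely so that it becomes a one-line consequence of \factref{normal-monotone}. The only small subtlety I would flag is bookkeeping: one must check that the depth $|A|+1+|B|$ does fall in the range $\{0, 1, \ldots, n\}$ over which the pass of \propref{pass-lex-weak} was carried out, and that the order in which the passes of \propref{pass-colour}, \propref{pass-order}, and \propref{pass-lex-weak} are performed does not destroy the previously obtained properties, which is standard because each pass only restricts to a subtree and hence only removes vertices.
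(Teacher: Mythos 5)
Your argument is correct and is exactly the paper's proof: the paper's proof consists of the single line ``Follows from \factref{normal-monotone},'' and your unwinding (fix $len=|A|+1+|B|$ and $p$, take the slice with $*$ in position $|A|+1$ of the lex-monotone array from \propref{pass-lex-weak}) is just that argument spelled out. No further comment is needed.
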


\begin{proof}
Follows from \factref{normal-monotone}.
\end{proof}
As for the direction of $(A+*+B,p)$, we shall encode them in the
function $Z(i,j,p)$. 
\begin{defn}
\label{def:Z}The function $Z(i,j,p)$ takes value in $\{INC,DEC\}$.
This is defined for $1\leq i\leq j\leq n$ and $p\in[1,m]$. This
is the direction of any/all (monotone) sequences $(A+*+B,p)$ where
$*$ is at position $i$ and the total length $|A|+1+|B|$ is equal
to $j$.
\end{defn}

The values $Z(*,len,p)$ is obtained as the sign vector $s$ obtained
from the lex-monotone $L([*,*,*,....,*],p)$ with $len$ '{*}'s.

By combining the extra assumption we get from all three passes \propref{pass-colour},\propref{pass-order},\corref{pass-lex-basic},
we can find many related sequences.
\begin{prop}
\label{prop:main-related}

(1) Sequences $(A+*+Y+v,p)$ and $(A+*+Y,p)$ are related for any
arrays $A,Y$ and $p\in[m]$. The relation colour only depends on
$(|A|,|Y|,p,v)$ and not values of $A$ or $Y$.

(2) Sequences $(A+*+Y+v,p)$ and $(A+*+Y,p+1)$ are related for any
arrays $A,Y$ and $p\in[m-1]$. The relation colour only depends on
$(|A|,|Y|,p,v)$.

(3) Sequences $(A+*+Y,p)$ and $(A+*+Y,p)$ are related for any arrays
$A,Y$ and $p\in[m]$. The relation colour only depends on $(|A|,|Y|,p)$.
\end{prop}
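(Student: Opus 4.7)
The three defining properties of ``related'' sequences are: both being monotone, being order consistent, and being uniformly adjacent in a common colour. The plan is to verify each property by invoking exactly one of the three Ramsey passes carried out earlier in the section, one pass per property.

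Monotonicity is immediate from \corref{pass-lex-basic}: every sequence of the form $(A+*+B,p)$ is monotone, which covers both sequences in all three parts.

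For uniform adjacency in case (1), the edge $(A+c+Y+v,p)\sim(A+c+Y,p)$ is a vertical edge whose larger-depth endpoint has depth $|A|+|Y|+2$ and whose path position is $p$. By \propref{pass-colour}(1), all such vertical edges carry the same colour, which depends only on $|A|+|Y|+1$ and $p$. Hence the pair of sequences is uniformly adjacent, and the colour is determined by $(|A|,|Y|,p)$ (and in particular by the $4$-tuple $(|A|,|Y|,p,v)$ asserted in the proposition). The same reasoning handles case (2), with \propref{pass-colour}(3) applied to the diagonal edges $(A+c+Y+v,p)\sim(A+c+Y,p+1)$, and case (3), with \propref{pass-colour}(2) applied to the horizontal edges $(A+c+Y,p)\sim(A+c+Y,p+1)$. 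In each case the colour depends only on the edge type, the path position(s), and the depth(s), all of which are functions of $|A|$, $|Y|$, $p$, and $v$.

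For order consistency in case (1), I would apply \propref{pass-order} with the ``$A$, $B$'' of that statement taken to be $A+[c]$ and $A+[c']$ (same length $|A|+1$), the ``$X$, $Y$'' taken to be $Y+v$ and $Y$, and $i=j=p$. The conclusion is that $(A+c+Y+v,p)<(A+c+Y,p)$ holds if and only if $(A+c'+Y+v,p)<(A+c'+Y,p)$ holds, which is exactly the statement that the sign of the comparison is independent of the value inserted at the $*$ coordinate. Running this over all pairs $(c,c')$ establishes order consistency of the two sequences. Cases (2) and (3) follow by the same substitution, adjusting $i$ and $j$ to be $p$ and $p+1$ as dictated by the edge type.

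I do not expect a genuine obstacle; the proof is essentially bookkeeping that repackages \propref{pass-colour}, \propref{pass-order}, and \corref{pass-lex-basic}. The one thing that requires care is the name collision between the meta-variables ``$A,X,Y$'' appearing in the statements of the pass lemmas and the variables $A,Y$ fixed in the statement being proved; I would therefore carry out the substitutions explicitly, as above, rather than implicitly.
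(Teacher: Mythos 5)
Your proposal is correct and takes essentially the same route as the paper: monotonicity from \corref{pass-lex-basic}, order consistency by applying \propref{pass-order} to exactly the same 6-tuples $(A+c,A+c',Y+v,Y,p,p)$ (resp.\ with $p,p+1$), and uniform adjacency from \propref{pass-colour} by edge type. The only nit is a harmless inconsistency in which depth you name as determining the vertical-edge colour ($|A|+|Y|+1$ versus the larger endpoint depth $|A|+|Y|+2$ used in the paper's convention); either way the colour is a function of $(|A|,|Y|,p,v)$, as required.
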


\begin{proof}
For (1), to show $(A+*+Y+v,p)$ and $(A+*+Y,p)$ are related, we have
to check monotone, order consistent and uniformly adjacent.

\subparagraph{Monotone}

They are both monotone by \corref{pass-lex-basic}.

\subparagraph{Order Consistent}

Apply \propref{pass-order} to the 6-tuple $(A+c,A+c',Y+v,Y,p,p)$
we get $(A+c+Y+v,p)<(A+c+Y,p)$ iff $(A+c'+Y+v,p)<(A+c'+Y,p)$, meaning
$(A+*+Y+v,p)$ and $(A+*+Y,p)$ are order consistent.

\subparagraph*{Uniformly Adjacent}

There is an vertical edge between $(A+a+Y+v,p)$ and $(A+a+Y,p)$
for each $a$. All of them have colour $C(|A|+|Y|+2,p,\text{vertical})$
as given by \propref{pass-colour}.

For (2), the proof is analogous. But we use the 6-tuple $(A+c,A+c',Y+v,Y,p,p+1)$
instead. And they have colour $C(|A|+|Y|+2,p,\text{diagonal})$ instead.

For (3), the proof is very similar. 

\subparagraph{Order consistent}

Apply \propref{pass-order} to the $6$-tuple $(A+c,A+c',Y,Y,p,p+1)$,
we get $(A+c+Y,p)<(A+c+Y,p+1)$ iff $(A+c'+Y,p)<(A+c'+Y,p+1)$. This
shows $(A+*+Y,p)$ and $(A+*+Y,p)$ are order consistent.

\subparagraph{Colouring}

All of the edges have the colour $C(|A|+|Y|+1,p,\text{horizontal})$
\end{proof}
In fact, the permutation of lexicographical ordering must go in the
order of tree depth. In order to show this, we must invoke \lemref{ultra-inter}.
\begin{lem}
\label{lem:identity-permutation}There is a constant $C=C(n)$ such
that if each $d_{i}\geq C(n)$, The $\sigma$ involved in each lex-montone
is exactly the identity permutation. That is, the lex monotone ordering
comparing $(A,p)$, $(B,p)$ will first consider the first coordinate
of $A,B$, and if they are equal, consider the second coordinate,
and so on.
\end{lem}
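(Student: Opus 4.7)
Suppose for contradiction that the permutation $\sigma$ in the lex-monotone array $L([\ast,\ldots,\ast],p)$ (for some depth $d \leq n$ and path position $p$) is not the identity. Let $j$ be the least index with $\sigma(j) \neq j$. Since $\sigma$ fixes $1,\ldots,j-1$ pointwise, we have $\sigma(j) = k$ for some $k > j$, and in particular $j < d$.

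The first step is to produce a pair of strongly interleaving sequences at depth $d$. Place the $\ast$ at coordinate $k$ and set every other coordinate to $1$, except that $B_1$ has a $2$ at coordinate $j$:
\[
A_1 = ([\underbrace{1,\ldots,1}_{k-1},\ast,\underbrace{1,\ldots,1}_{d-k}],p),\quad
B_1 = ([\underbrace{1,\ldots,1}_{j-1},2,\underbrace{1,\ldots,1}_{k-j-1},\ast,\underbrace{1,\ldots,1}_{d-k}],p).
\]
Two elements of $A_1, B_1$ always disagree at coordinate $j$ ($1$ versus $2$) and agree elsewhere except possibly at coordinate $k$. The lex-monotone rule inspects coordinates in the order $\sigma(1),\sigma(2),\ldots$; positions $1,\ldots,j-1$ agree, so the first deciding coordinate is $\sigma(j) = k$ whenever the $\ast$-values differ, while the case of equal $\ast$-values is resolved uniformly later at the unique $\ell > j$ with $\sigma(\ell) = j$. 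A short case analysis on the signs $s(j)$ and $s(\ell)$ shows that $A_1$ and $B_1$ strongly interleave in the sense of \defref{strong-interleave}.

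The second step is to chain $A_1, B_1$ down to a pair of same-colour fans at depth $k$. For $r = 1,\ldots,d-k+1$, let $A_r$ and $B_r$ denote $A_1$ and $B_1$ respectively with the last $r-1$ trailing $1$'s removed. Successive pairs $(A_r, A_{r+1})$ and $(B_r, B_{r+1})$ are related via vertical edges by \propref{main-related}(1); since $A_r, B_r$ share a common depth and have their $\ast$ at the same coordinate, \factref{normal-monotone} forces them to be monotone in a common direction. The terminal sequences $A_{d-k+1} = ([1,\ldots,1,\ast],p)$ and $B_{d-k+1} = ([1,\ldots,2,\ldots,1,\ast],p)$ (the $2$ at coordinate $j$) are fanned at their tree parents by vertical edges at depth $k$ and path position $p$, so they share a colour by \propref{pass-colour}.

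All hypotheses of \lemref{ultra-inter} are then satisfied with chain length $d-k+1 \leq n$. Because the degree sequence is chosen so that the length of $A_1$ and $B_1$ exceeds $4^{n}\cdot 10$, the established strong interleave of $A_1, B_1$ contradicts \lemref{ultra-inter}. The main obstacle is that $A_1$ and $B_1$ themselves are not fans when $k < d$, so \propref{fan-fan} does not apply directly; the chain of vertical-related sequences is precisely what lifts the interleave to the fan depth $k$ where a crossing can be extracted. The degenerate case $k = d$ collapses the chain to length one and reduces cleanly to \propref{fan-fan}.
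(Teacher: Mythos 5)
Your proposal is correct and follows essentially the same route as the paper: place the $\ast$ at the coordinate $k=\sigma(j)$ that the permutation reads early and the $1$-versus-$2$ difference at the shallower coordinate $j$ read later, observe the two sequences strongly interleave, then chain them by removing trailing $1$'s (vertical relations from \propref{main-related}) down to two same-colour fans and invoke \lemref{ultra-inter} against the large degree. The only cosmetic difference is that you fix the least non-fixed index (and treat $k=d$ separately), whereas the paper works with an arbitrary inversion; the substance of the argument is identical.
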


\begin{proof}
Fix path position $p$and length $len$. Suppose $\sigma$ has any
inversion, such as it reading position $i$ before position $j$,
where $i>j$.

We use $I_{i}$ to denote an array of length $i$ that consists of
entirely '$1$'s.

Consider the two sequences $(A+1+B+*+I_{t},p)$ and $(A+2+B+*+I_{t},p)$,
where $A,B$ are arbitrary (such as consisting of entirely '1's),
the value $1$/ value $2$ is in position $i$, the $*$ is in position
$j$, the whole array is of size $len$. Therefore, $t=len-j$.

Elements of these two sequences only possibly differ in position $i,j$
, and since we compare elements by considering position $i$ first
then position $j$, it is easy to see that $(A+1+B+*+I_{t},p)$ and
$(A+2+B+*+I_{t},p)$ are strongly interleaving by definition.

Let $S_{i}=(A+1+B+*+I_{t-i},p)$ , $Q_{i}=(A+2+B+*+I_{t-i},p)$. We
claim that the chains $S_{0}\sim S_{1}\cdots\sim S_{t}$ and $Q_{0}\sim Q_{1}\cdots\sim Q_{t}$
satisfy the hypothesis in \lemref{ultra-inter}. We verify the conditions
(1), (2) (3) in \lemref{ultra-inter}.

Condition (1): We see $S_{i}=(A+1+B+*+I_{t-i},p)$ and $Q_{i}=(A+2+B+*+I_{t-i},p)$
are monotone in the same direction $Z(len-i,j,p)$.

Condition (2): By \propref{main-related}, each $S_{i}$ and $S_{i+1}$
are related, each $Q_{i}$ and $Q_{i+1}$ are related.

Condition (3): $(A+1+B+*,p)$ is fanned to $(A+1+B,p)$ and $(A+2+B+*,p)$
is fanned to $(A+2+B,p)$. The colour of the two fans are the same:
it is $C(|A|+|B|+2,\text{p},\text{Vertical})$.

So, all conditions for the \lemref{ultra-inter} have been satisfied.
It follows from \lemref{ultra-inter} that $(A+1+B+*+X,p)$ and $(A+2+B+*+X,p)$
cannot $10\times4^{|X|}$ interleave. So by taking $C=10\times4^{n}$
the above condition must fail. In this case, $\sigma$must be the
identity permutation.
\end{proof}
As a consequence of \lemref{identity-permutation}, $Z(i,j,p)$ enables
to compare any $(A,p)$,$(B,p)$ with $|A|=|B|$. The following is
stated for clarity.
\begin{fact}
\label{prop:pass-lex-2}Let $A,B$ be two different integer arrays
of the same length, let $p\in[m]$ be an integer. Then $(A,p)$ and
$(B,p)$ can be compared as follows: 

Let $i$ be the first position in which $A$ and $B$ differs,

(1) If $Z(i,|A|,p)$ is INC, then$(A,p)<(B,p)$ iff $A_{i}<B_{i}$

(2) If $Z(i,|A|,p)$ is DEC, then $(A,p)<(B,p)$ iff $A_{i}>B_{i}$ 
\end{fact}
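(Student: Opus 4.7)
The statement is really just a convenient repackaging of \lemref{identity-permutation} together with the definition of lex-monotone arrays and of $Z(i,j,p)$, so the plan is essentially to unpack definitions carefully. Set $len = |A| = |B|$, and view the $len$-dimensional array $({\bf x}) \mapsto L(({\bf x}), p)$, which by \propref{pass-lex-weak} is lex-monotone with some permutation $\sigma \colon [len] \to [len]$ and sign vector $s \in \{INC,DEC\}^{len}$.

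First, I would invoke \lemref{identity-permutation} to conclude that $\sigma$ is the identity permutation (this is where the real content lies, but the work has already been done). Then, by \defref{lex-monotone}, to compare $(A,p)$ and $(B,p)$ one looks for the smallest $i$ such that $A_{\sigma(i)} \neq B_{\sigma(i)}$; with $\sigma = \text{id}$ this is exactly the first coordinate at which $A$ and $B$ disagree, matching the $i$ in the statement.

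Next, I would identify the sign $s(i)$ at that coordinate with $Z(i,len,p)$. This is immediate from \defref{Z}: the direction of the monotone sequence $L([a_{1},\ldots,a_{i-1},*,a_{i+1},\ldots,a_{len}],p)$ is precisely $s(i)$ by \factref{normal-monotone}, and \defref{Z} defines $Z(i,len,p)$ as exactly that direction. So $s(i) = Z(i,len,p)$.

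Finally, applying the lex-monotone comparison rule: if $Z(i,len,p) = INC$ then $(A,p) < (B,p)$ iff $A_i < B_i$, and if $Z(i,len,p) = DEC$ then $(A,p) < (B,p)$ iff $A_i > B_i$, giving (1) and (2). There is no real obstacle here; the statement is a bookkeeping corollary of the previous lemma and should take only a few lines to verify.
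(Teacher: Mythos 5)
Your proposal is correct and matches the paper's treatment: the paper states this Fact without a written proof precisely because, as you observe, it is an immediate unpacking of \lemref{identity-permutation} (giving $\sigma=\mathrm{id}$), the definition of lex-monotone, and the identification $s(i)=Z(i,|A|,p)$ from \defref{Z}. Nothing further is needed.
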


\subsection{Property of $Z(i,j,p)$}

Not all combinations of $Z(i,j,p)$ are possible. $Z(i,j,p)$ have
a property that is preserved one way as we move across $i$. 
\begin{lem}
\label{lem:two-d}Let $A,B$ be $2$-dimensional lex-monotone arrays
with length at least $3$ in each coordinate, with permutations $\sigma_{A},\sigma_{B}$
and sign vector $s_{A},s_{B}$ respectively. Suppose both $\sigma_{A},\sigma_{B}$
are both the identity permutation, and that $A(i,*)$ are related
to $B(i,*)$ for each $i$. Then we cannot have $s_{A}(1)\neq s_{B}(1)$
and $s_{A}(2)=s_{B}(2)$ simultaneously.
\end{lem}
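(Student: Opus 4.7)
The plan is to argue by contradiction: suppose $s_A(1)\neq s_B(1)$ and $s_A(2)=s_B(2)$. After swapping the roles of $A$ and $B$ if necessary (which preserves both hypotheses), I may assume $s_A(1)=INC$ and $s_B(1)=DEC$; write $s:=s_A(2)=s_B(2)$ for the common second-coordinate direction. Because the lex-monotone comparisons both use the identity permutation, $s_A(1)=INC$ says that every entry of $A(i,*)$ lies below every entry of $A(i',*)$ in $L$ whenever $i<i'$, and dually $s_B(1)=DEC$ places every entry of $B(i,*)$ above every entry of $B(i',*)$ whenever $i<i'$. In particular, for every $k$,
\[
  A(2,k) \;>\; \max_{j} A(1,j) \qquad \text{and} \qquad B(2,k) \;<\; \min_{j} B(1,j).
\]

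For each row $i$, the pair $A(i,*),\,B(i,*)$ is related and monotone in the common direction $s$, so by \propref{bundled} it is bundled and strongly interleaves. Apply this first to $i=1$. A direct inspection of the two strong-interleaving patterns listed in \defref{strong-interleave} (for each of $s=INC$ and $s=DEC$, and for each choice of which of $A(1,*),\,B(1,*)$ is the ``outer'' sequence in the interleaving) shows that in all four patterns one has $\max_{j}A(1,j) > \min_{j}B(1,j)$, simply because the length-$3$ hypothesis makes the interleaving chain long enough that the extremal $A$-entry overshoots the extremal $B$-entry. Chaining this with the two inequalities above gives, for all $k,j$,
\[
  A(2,k) \;>\; \max_{j'} A(1,j') \;>\; \min_{j'} B(1,j') \;>\; B(2,j),
\]
so every element of $A(2,*)$ lies strictly above every element of $B(2,*)$.

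Now apply the same bundling argument to $i=2$: the related pair $A(2,*),\,B(2,*)$ shares direction $s$ and hence strongly interleaves by \propref{bundled}. But \defref{strong-interleave} demands strict alternation in the linear order, so some entry of $B(2,*)$ must sit above some entry of $A(2,*)$, contradicting what was just derived.

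The only delicate point is the little case check in the second paragraph: one has to verify, across the four interleaving patterns, that $\max A(1,*) > \min B(1,*)$ always holds; this is really the heart of the argument but it is an immediate read-off from \defref{strong-interleave}. Everything else is just propagating monotonicity from row $1$ to row $2$, where the opposite signs $s_A(1)=INC$ and $s_B(1)=DEC$ pull the $A$-row and $B$-row apart and destroy the alternation required by the row-$2$ strong interleaving. No other subtleties are expected.
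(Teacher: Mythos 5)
Your proof is correct and follows essentially the same route as the paper: both arguments combine the row-wise strong interleaving supplied by \propref{bundled} with the between-row ordering forced by the identity permutation in the lex-monotone structure. The only cosmetic difference is that you derive a contradiction from the complete separation of $A(2,*)$ and $B(2,*)$, whereas the paper compares the single pair $B_{1,1}<B_{2,3}$ to read off $s_B(1)$ directly.
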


\begin{proof}
Without loss of generality, assume $s_{A}(1)=s_{A}(2)=increasing$,
we can reduce to this case by possibly reversing enumeration of both
$A,B$, on the first or second dimension.

Suppose we have $s_{A}(2)=s_{B}(2)$. From $s_{A}(2)=s_{B}(2)$ and
the assumption that $A_{i,*}$ is related to $B_{i,*}$, we know $A_{i,*}$
strongly interleaves with $B_{i,*}$ by \propref{bundled}, and thus
$B_{i,1}<A_{i,2}<B_{i,3}$ for each $i$. 

By assumption $s_{A}(1)=increasing$, so $A_{1,2}<A_{2,2}$. Therefore
$B_{1,1}<A_{1,2}<A_{2,2}<B_{2,3}$. Thus, $B_{1,1}<B_{2,3}$. Since
$(1,1)$ and $(2,3)$ have their first coordinate differs, by definition
of lex-monotone $B_{1,1},B_{2,3}$ are compared using $s_{B}(1)$.
Therefore, we know $s_{B}(1)=increasing$. So $s_{A}(1)=s_{B}(1)$.
\end{proof}
It is not difficult to see that the other three combinations: ($s_{A}(1)=s_{B}(1)$,
$s_{A}(2)=s_{B}(2)$), ($s_{A}(1)=s_{B}(1)$, $s_{A}(2)\neq s_{B}(2)$),
$(s_{A}(1)\neq s_{B}(1),s_{A}(2)\neq s_{B}(2))$ are all possible.
\begin{lem}
\label{lem:z-consistent}(a) For any $1\leq p\leq m$, $k\geq2$ and
$k\leq i$ , and $Z(k,i+1,p)=Z(k,i,p)$, then $Z(k-1,i+1,p)=Z(k-1,i,p)$.

(b) For any $1\leq p\leq m$, $k\geq2$ and $k\leq i$, and $Z(k,i+1,p)=Z(k,i,p+1)$,
then $Z(k-1,i+1,p)=Z(k-1,i,p+1)$.

(c) For any $1\leq p\leq m$, $k\geq2$ and $k\leq i$, and $Z(k,i,p)=Z(k,i,p+1)$,
then $Z(k-1,i,p)=Z(k-1,i,p+1)$.
\end{lem}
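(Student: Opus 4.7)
The plan is to reduce all three parts uniformly to \lemref{two-d} by constructing, in each case, two 2D lex-monotone arrays obtained as slices of the global lex-monotone arrays $L([*,*,\ldots,*],p)$. The two varying coordinates will sit at positions $k-1$ and $k$ of the ambient index array, so that the first entry of each sign vector is the relevant $Z(k-1,\cdot,\cdot)$ and the second entry is the relevant $Z(k,\cdot,\cdot)$. The hypothesis of each part then says exactly that the two second-entries agree, and \lemref{two-d} will force the two first-entries to agree as well, which is the desired conclusion.

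Concretely, for part (a) I would fix auxiliary arrays $X$ of length $k-2$ and $Y$ of length $i-k$ together with some $v\in[d_i]$, and define
\begin{align*}
A(x,y) &= L\bigl((X+x+y+Y,\ p)\bigr),\\
B(x,y) &= L\bigl((X+x+y+Y+v,\ p)\bigr).
\end{align*}
Then $A$ is a 2D slice of the length-$i$ lex-monotone array and $B$ is a 2D slice of the length-$(i+1)$ one; both inherit the identity permutation from \lemref{identity-permutation} and have sign vectors
\[
s_A=(Z(k-1,i,p),\ Z(k,i,p)), \qquad s_B=(Z(k-1,i+1,p),\ Z(k,i+1,p)).
\]
For each fixed $x$, the slices $A(x,*)$ and $B(x,*)$ are of the form $(X+x+*+Y,p)$ and $(X+x+*+Y+v,p)$, connected by vertical edges, and are related by \propref{main-related}(1). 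The hypothesis $Z(k,i+1,p)=Z(k,i,p)$ gives $s_A(2)=s_B(2)$, so \lemref{two-d} delivers $s_A(1)=s_B(1)$, i.e.\ $Z(k-1,i+1,p)=Z(k-1,i,p)$.

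Parts (b) and (c) will follow by the same pattern, varying only the ambient path positions and therefore the edge type connecting $A(x,y)$ to $B(x,y)$. For (b), take $A(x,y)=L((X+x+y+Y,\,p+1))$ and $B(x,y)=L((X+x+y+Y+v,\,p))$; the slices $A(x,*),B(x,*)$ are then related via diagonal edges by \propref{main-related}(2). For (c), take $A(x,y)=L((X+x+y+Y,\,p))$ and $B(x,y)=L((X+x+y+Y,\,p+1))$; the slices are related via horizontal edges by \propref{main-related}(3). In each case the hypothesis translates to $s_A(2)=s_B(2)$ and \lemref{two-d} yields $s_A(1)=s_B(1)$.

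I do not expect a substantial obstacle here: the real content sits inside \lemref{two-d} (which already hides the crossing/interleaving analysis through a \lemref{ultra-inter}-style argument), and this lemma amounts to feeding the right 2D slices into that tool. The only point requiring a careful check is that a 2D slice of a lex-monotone array with identity permutation is itself 2D lex-monotone with identity permutation, and with its two sign-vector entries equal to the $Z$-values at the selected coordinate positions; this is immediate from the definition of lex-monotone together with \lemref{identity-permutation}.
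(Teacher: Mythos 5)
Your proposal is correct and matches the paper's own proof essentially verbatim: the paper also forms the two 2D slices with the varying coordinates at positions $k-1$ and $k$ (its $T_1,T_2$ are your $B,A$), relates the rows via \propref{main-related} with the vertical, diagonal, or horizontal case respectively, and invokes \lemref{identity-permutation} and \lemref{two-d} to transfer equality of the second sign-vector entries to the first.
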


\begin{rem*}
The conditions $k\leq i$ and $k\geq2$ have no special meanings,
other than the fact that all $Z(a,b,p)$ involved are defined. Recall
that $Z(a,b,p)$ is only defined for $a\geq1$ and $a\leq b$. The
three statements are corresponds to vertical, diagonal and horizontal
sequences.
\end{rem*}
\begin{proof}
(a): Suppose $Z(k,i+1,p)=Z(k,i,p)$. Consider the two dimensional
arrays $T_{1}(x,y)=(A+x+y+X+1,p)$ and $T_{2}(x,y)=(A+x+y+X,p)$.
The arrays $A$ and $X$ consists of arbitrary integers, but its length
are chosen such $x$ is at position $k-1$, $y$ is at position $k$,
and $T_{1}$ consists of elements of depth $i+1$, and $T_{2}$ consists
of elements of depth $i$. By \propref{pass-lex-weak},\lemref{identity-permutation}
$T_{1}$ and $T_{2}$ are indeed lex-monotone with the identity permutation.
$T_{1}(x,*)=(A+x+*+1,p)$ is related to $T_{2}(x,*)=(A+x+*,p)$ by
\ref{prop:main-related}.If $s_{1}$ and $s_{2}$ are the sign vectors,
the assumption $Z(k,i+1,p)=Z(k,i,p)$ says $s_{1}(2)=s_{2}(2)$. The
hypothesis of \lemref{two-d} is satisfied, and \lemref{two-d} shows
we must have $s_{1}(1)=s_{2}(1)$, which means $Z(k-1,i+1,p)=Z(k-1,i,p)$

(b) The proof is exactly the same, but we use $T_{1}(x,y)=(A+x+y+X+1,p)$
and $T_{2}(x,y)=(A+x+y+X,p+1)$ instead.

(c) The proof is exactly the same, but we use $T_{1}(x,y)=(A+x+y+X,p)$
and $T_{2}(x,y)=(A+x+y+X,p+1)$ instead, and such that $T_{1},T_{2}$
both consists of elements of depth $i$. 
\end{proof}

\section{The Hexagonal Grid \label{sec:hexgaonal}}

In the rest of the proof, we attempt to find a contradiction among
$Z(i,j,p)$, using \lemref{z-consistent} and \lemref{ultra-inter-weak}.
We shall need properties about hexagonal grids. Firstly, we shall
regard $Z(i,j,p)$ as the $2-$colouring of many hexagonal grids.
\begin{defn}
(Hexagonal Grid) The $n\times m$ hexagonal grid consists of vertex
$(i,j)$ for $i\in[n],j\in[m]$. It consists of three types of edges:
horizontal edges, vertical edges, and diagonal edges, in a fashion
similar to $T\boxslash P$: It consists of edges for each $i,j$:
\begin{itemize}
\item $(i+1,j)\sim(i,j)$
\item $(i+1,j)\sim(i,j+1)$
\item $(i,j)\sim(i,j+1)$
\end{itemize}
\end{defn}

Th hexagonal grid will be frequently denoted by $H$. We refer to
edges in a hexagonal grid by ``$H$-edges'' , in order to not confuse
with edges in $T$ or $T\boxslash P$.

We can consider $Z(i,j,p)$ as a $2$-colouring of $n$ different
Hexagonal grids. The $i$-th board consists of vertex with coordinates
$(x,y)$ where $x\geq i$, and its colouring information are taken
from $Z(i,*,*)$. In this case, the information from \lemref{z-consistent}
correspond to one way preservation as we move between grids.
\begin{cor}
\label{cor:chromatic-perserve}(1) If $C$ is a monochromatic component
in $Z(i,*,*)$, then $C$ is also monochromatic in $Z(i-1,*,*)$ (not
necessarily a maximal monochromatic component). 

(2) If $a$ and $b$ are two adjacent points in $Z(i,*,*)$ and they
are of different colour, then $a,b$ is of different colour in $Z(i+1,*,*)$
as well.
\end{cor}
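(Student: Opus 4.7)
The plan is to observe that Corollary~\corref{chromatic-perserve} is essentially a repackaging of Lemma~\lemref{z-consistent} in the hexagonal-grid language just introduced. The three cases (a), (b), (c) of Lemma~\lemref{z-consistent} correspond exactly to the three $H$-edge types (vertical, diagonal, horizontal), so jointly they assert: for any pair of hex-adjacent vertices $u,v$, if they receive the same colour on board $Z(k,\ast,\ast)$, then they also receive the same colour on board $Z(k-1,\ast,\ast)$. In short, same-colour hex-edges ``descend'' from board $k$ to board $k-1$.

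For part (1), I would take two vertices $u, v$ in a monochromatic component $C$ of $Z(i,\ast,\ast)$ and fix a witnessing hex-path $u = w_0, w_1, \ldots, w_t = v$, whose vertices all share a single colour on board $i$. Applying Lemma~\lemref{z-consistent} with $k = i$ to each consecutive pair $(w_j, w_{j+1})$, using whichever of (a), (b), (c) matches the type of that $H$-edge, gives $Z(i-1, w_j) = Z(i-1, w_{j+1})$ for every $j$; chaining these equalities along the path yields $Z(i-1, u) = Z(i-1, v)$. Since $u, v \in C$ were arbitrary, $C$ is monochromatic on $Z(i-1, \ast, \ast)$ (though possibly contained in a strictly larger monochromatic component there, which is why the corollary does not claim maximality).

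For part (2), I would simply take the contrapositive: if hex-adjacent $a, b$ satisfied $Z(i+1, a) = Z(i+1, b)$, then Lemma~\lemref{z-consistent} applied with $k = i+1$ (in whichever of the three cases matches the type of the edge $ab$) would force $Z(i, a) = Z(i, b)$, contradicting the hypothesis that $a,b$ have different colours on board $i$.

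There is no substantive obstacle; the corollary is almost a direct reformulation. The only book-keeping points are to match each hex-edge type to the correct case of Lemma~\lemref{z-consistent} and to keep the index direction straight: equality of colours is inherited by smaller board indices, while inequality is inherited by larger board indices.
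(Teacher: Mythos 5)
Your proof is correct and follows essentially the same route as the paper, which simply combines the three cases of Lemma \ref{lem:z-consistent} (one per $H$-edge type) and notes that part (2) is the contrapositive; your write-up just spells out the path-chaining for part (1) explicitly.
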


\begin{proof}
Combine the three parts of \lemref{z-consistent}. (2) is the contrapositive
of (1).
\end{proof}
The hexagonal board is interesting when being $2$- coloured. We can
see this using the dual graph.
\begin{defn}
(Hexagonal Dual graph) The dual graph of the hexagonal grid is a section
of the tiling of the two dimensional plane by regular hexagons. Each
vertex corresponds to a hexagon. Three hexagons meet at one point.
A non-border vertex of the dual graph have degree $3$.

There are two situations where three hexagons meet:
\begin{itemize}
\item $(i,j),(i-1,j),(i-1,j+1)$
\item $(i,j),(i,j+1),(i-1,j+1)$
\end{itemize}
The dual graph $\bar{H}$ have vertices being points where three hexagon
meets, and edges being edges of the hexagon.
\end{defn}

\begin{defn}
\label{def:dual-graph-label}(Dual graph Indexing) We index the dual
graph as follows 
\begin{itemize}
\item $(i,j,-)$ denotes the intersection point of hexagons $(i,j),(i-1,j),(i-1,j+1)$
\item $(i,j,+)$ denotes the intersection points of hexagons $(i,j),(i,j+1),(i-1,j+1)$
\end{itemize}
The indexing once again follows the convention that the higher depth
and smaller path position is taken.

The three types of edges are 
\begin{itemize}
\item $(i,j,-)\sim(i,j,+)$
\item $(i,j,-)\sim(i-1,j,+)$
\item $(i,j,-)\sim(i,j-1,+)$
\end{itemize}
\begin{figure}
\begin{centering}
\includegraphics{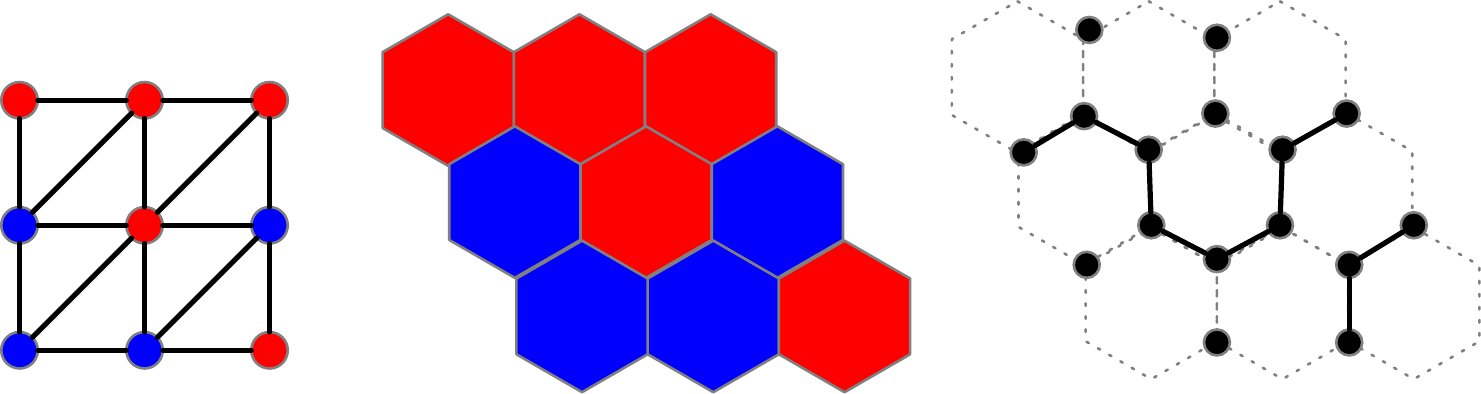}
\par\end{centering}
\caption{2-colouring of a hexagonal grid of size $3$, its dual graph, and
the colour boundary graph $\bar{H}'$. Vertices on $\bar{H}'$ involving
in only one hexagon are removed.}

\end{figure}
\end{defn}

Suppose vertex of the hexagonal grid is being $2$-coloured (say,
red-blue colored), i.e. each hexagon on the dual graph is given one
of two colours, let $\bar{H}'$ be the subgraph of $\bar{H}$ that
consists of edges which border two hexagons of different colour. For
every generic vertex $v$of $\bar{H}$, either $0,1,2,3$ hexagons
bordering $v$ are being red coloured, then $d_{\bar{H}'}(v)=0,2,2,\text{or},0$
respectively. However, since in all applications we will only consider
a finite piece of the hexagonal board, the situation is different
on the borders. If $v$ only borders $2$ hexagons, then $d_{\bar{H}^{'}}(v)=0,1$,
and $1$ only when the two hexagons differ. Since $\bar{H'}$ have
every degree being $0,1,2$, it is easy to see that the edges decompose
into paths and cycles. Moreover, every degree $1$ vertex must be
start of some path, that ends at another degree $1$ vertex, which
is only possible to be at a border. This property is the key observation
behind the hex lemma:
\begin{lem}
\label{lem:hex-lemma}(Hex Lemma) Given a two colouring of the $n\times n$
hexagonal grid, there must exist a monochromatic path of length at
least $n$.
\end{lem}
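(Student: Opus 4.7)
The plan is to carry out the classical ``walk along the color boundary'' argument, reusing the subgraph $\bar{H}'$ already constructed in the excerpt. First I would extend the $n\times n$ grid by one extra layer of artificial hexagons, coloring them so that two opposite outer sides of the enlarged board are uniformly red and the other two opposite sides uniformly blue. With this convention there are exactly four points on the outer border where the surrounding color switches --- the four ``corners'' of a Hex board --- and at each such corner point the dual vertex has degree exactly $1$ in $\bar{H}'$, while all other outer-border dual vertices have even degree (either $0$ or $2$).

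Next I would invoke the structural fact already observed in the excerpt: every vertex of $\bar{H}'$ has degree $0$, $1$, or $2$, so $\bar{H}'$ decomposes into a disjoint union of simple cycles and simple paths whose endpoints are precisely the four corner vertices introduced above. The four corners must therefore be matched in pairs by paths of $\bar{H}'$. A short case analysis on the two possible matchings --- either the two red corners are joined to each other by a dual arc, or each red corner is joined to a blue corner --- shows that in either case one of the two colors must admit a primal path in $H$ running all the way from one of its boundary sides to the opposite one.

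The key sub-claim here is that once a dual arc in $\bar{H}'$ is fixed, each of the two plane regions it cuts off consists entirely of hexagons of a single color. Indeed, stepping off the arc into an adjacent hexagon and then moving further without crossing the arc, any color change encountered would contribute an additional $\bar{H}'$-edge incident to the arc, violating the fact that interior vertices of the arc already have $\bar{H}'$-degree $2$. Granting this, one side of the ``winning'' arc is a connected monochromatic region in $H$ that meets two opposite sides of the grid, and any path inside that region crossing the grid from side to opposite side visits every column (or every row) of the board, hence contains at least $n$ vertices and so has length at least $n$ in the required sense.

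I expect the main obstacle to be the careful topological bookkeeping in the second and third steps --- in particular, matching each of the two possible corner pairings to the correct ``winning'' color, and making rigorous the intuitive Jordan-curve claim that the two sides of a dual arc are monochromatic. Once those planarity-based facts are established, the conclusion follows immediately from the degree-structure of $\bar{H}'$ already developed in the excerpt, with no new combinatorial input required.
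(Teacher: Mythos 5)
Your overall route is the classical boundary-following argument that the paper itself only sketches (pad the board and cite Gale), so the plan is the right one; but the key sub-claim you rely on is false as stated. A dual arc of $\bar{H}'$ does \emph{not} cut the board into two monochromatic regions: picture a region on the red side of the arc containing a single blue hexagon none of whose neighbouring hexagons touches the arc. The $\bar{H}'$-edges around that blue island are genuine colour-boundary edges, but they are incident to no vertex of the arc, so your degree argument (``any colour change would contribute an additional $\bar{H}'$-edge incident to the arc, violating interior degree $2$'') detects nothing — there is no contradiction, and the region simply fails to be monochromatic. This matters for your conclusion, because a path ``inside that region'' crossing the board from side to side need not be monochromatic, so the final step does not follow from what you have established.

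The standard repair is local rather than regional, and it is what Gale's argument actually uses: every edge of the arc separates one red hexagon from one blue hexagon, and along the arc the red hexagons stay on one fixed side; consecutive arc edges border red hexagons that are equal or adjacent in $H$, so the red hexagons \emph{touching} the arc form a connected red walk in $H$ (and similarly blue on the other side). It is this adjacent chain, not the whole region, that runs from one side of the padded board to the opposite side; after discarding the artificial border hexagons it still meets every row (or column) of the original $n\times n$ grid, which yields the monochromatic path of length at least $n$. With that substitution, together with the corner/pairing bookkeeping you already flagged as outstanding, your proof goes through and coincides with the argument the paper cites.
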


This can be proved by padding $n\times n$ grid with a border of certain
colour. The details can be found on \cite{hex}. The Hex lemma itself
is used in the first proof of unbounded stack number with queue number
4 \cite{key-first-4}. 

For our result, we need more than monochromatic paths in $Z(i,j,p)$
to derive a contradiction. The hex lemma itself is not useful for
our purposes, as mentioned in \subsecref{comparison}. The only way
we can derive a contradiction is with at least a fan (recall \lemref{ultra-inter}).
Only vertices corresponding to $Z(i,i,p)$ can be made into a fan,
whereas others $Z(i,j,p)$ for $i<j$ can only be made at best a rainbow.
This is the reason for many technicalities in the remainder of the
proof, but it turns out the property is strong enough to arrive at
a contradiction. 

The following lemma shows the main way information travels across
various $Z(i,*,*)$. The chromatic boundary line here is exactly a
path or cycle on $\bar{H'}$, i.e. a continuous boundary separating
the two colours of the hexagons. However, we choose to label it in
the original graph $H$ instead. 
\begin{defn}
\label{def:chromatic-boundary}Consider a hexagonal grid $H$ and
a 2-colouring of $H$. Let $A,B$ be sequences of points in $H$.
Write $A=(a_{1},a_{2}\cdots a_{n})$, $B=(b_{1},b_{2}\cdots,b_{n})$,
where each $a_{i}$ and $b_{i}$ are of form $(x,p)$ for $1\leq x\leq n$
and $1\leq p\leq m$. We say parts $A$ and $B$ form a chromatic
boundary line if the following are satisfied: 
\begin{enumerate}
\item All $a_{i}$ have the same colour. All $b_{i}$ have the other colour. 
\item Each $(a_{i},b_{i})$ is an $H$-edge
\item For each $i\in\{1,2,\cdots n-1\}$, exactly one of the following holds,
$a_{i}=a_{i+1}$, $b_{i}=b_{i+1}$
\item All pairs $(a_{i},b_{i})$ are distinct
\end{enumerate}
\end{defn}

\begin{lem}
\label{lem:boundary-preserve}Let $(A,B)$ be a chromatic boundary
line on $Z(i,*,*)$. Provided that all points of $(A,B)$ are present
on $Z(i+1,*,*)$, then $(A,B)$ is a chromatic boundary line on $Z(i+1,*,*)$
as well.
\end{lem}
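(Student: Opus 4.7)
The plan is to separate the four conditions in Definition \ref{def:chromatic-boundary} into the purely structural ones — (2), (3), and (4), which refer only to adjacencies and distinctness of the pairs $(a_j, b_j)$ — and the colouring condition (1). The structural conditions make no reference to which grid $Z(k,*,*)$ we read colours from, so they carry over to $Z(i+1,*,*)$ as soon as every point of $(A,B)$ is present there, which is given by hypothesis. Only condition (1) needs genuine verification.

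The main tool for (1) is Corollary \ref{cor:chromatic-perserve}(2): if two adjacent points of $H$ receive different colours on $Z(i,*,*)$, then they still receive different colours on $Z(i+1,*,*)$. Applying this to each $H$-edge $(a_j, b_j)$, which is bichromatic on $Z(i,*,*)$ by hypothesis, immediately gives that $a_j$ and $b_j$ remain of opposite colours on $Z(i+1,*,*)$ for every $j$.

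It remains to show all $a_j$'s share a colour on $Z(i+1,*,*)$ (and symmetrically for the $b_j$'s). I would induct on $j$ using condition (3). If $a_j = a_{j+1}$, the agreement is trivial; if instead $b_j = b_{j+1}$, then by the preceding paragraph both $a_j$ and $a_{j+1}$ must be of the opposite colour to the common vertex $b_j = b_{j+1}$ on $Z(i+1,*,*)$, forcing $a_j$ and $a_{j+1}$ to agree. Iterating this along the chain makes $\{a_j\}$ monochromatic and $\{b_j\}$ the opposite monochromatic colour, establishing (1).

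The only real obstacle is spotting the right reduction: once one observes that (2)--(4) are purely combinatorial and hence inherited for free, and that Corollary \ref{cor:chromatic-perserve}(2) combined with the shared-endpoint structure guaranteed by (3) yields a one-step induction, there is no further difficulty. This is ultimately why the lemma is formulated in terms of the pair of chains $(A,B)$ rather than as a bare statement about paths on the dual graph $\bar{H}'$ — the paired formulation makes the propagation of monochromaticity along the boundary completely immediate.
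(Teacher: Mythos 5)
Your proposal is correct and follows essentially the same route as the paper: conditions (2)--(4) are grid-independent, and condition (1) is propagated along the boundary by applying Corollary \corref{chromatic-perserve}(2) to the edges sharing an endpoint, exactly as in the paper's proof (which phrases the same induction step as a single quadruple $a_j,b_j,a_{j+1},b_{j+1}$ with $a_j=a_{j+1}$). No gaps.
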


\begin{proof}
Consider $a_{i},b_{i},a_{i+1},b_{i+1}$. Without loss of generality,
we assume $a_{i}=a_{i+1}$. As $a_{i}$ is coloured differently from
$b_{i}$, and differently form $b_{i+1}$, these two facts are preserved
as we move to $Z(i+1,*,*)$ by \corref{chromatic-perserve}. In particular,
$b_{i}$ have the same colour as $b_{i+1}$. Therefore, property $(1)$
holds. Properties $(2),(3),(4)$ does not take into account which
grid $Z(i,*,*)$ it is in, so they automatically holds. 
\end{proof}
\begin{defn}
Recall that from \propref{pass-colour}, colour of all $T\boxslash P$
edges $(A,i)\sim(B,j)$ only depends on the depth and path position.
We can define the colour of an $H$-edge $(x,p)\sim(y,q)$ to naturally
be the colour of all/any $(A,p)\sim(B,q)$ where $|A|=x$ and $|B|=q$. 
\end{defn}

We transform \lemref{ultra-inter} into a condition that derive contradiction
from certain types of chromatic boundary line.
\begin{lem}
\label{lem:k-points-sep}There cannot exist a chromatic boundary and
one of the $s$ colours $c$ with the following property: There are
two positions on the chromatic boundary $(a_{i},b_{i})$ , $(a_{j},b_{j})$,
such that 
\begin{enumerate}
\item the colour of $H$-edges $(a_{i},b_{i}),(a_{j},b_{j})$ are both $c$. 
\item $(a_{i},b_{i})$ is a vertical $H$-edge and $a_{i}$ have greater
depth. The same holds for $(a_{j},b_{j})$
\item When transversing from $a_{i}$ to $a_{j}$ along the chromatic boundary,
all intermediate $a_{t}$ have $depth(a_{t})\geq min(depth(a_{i}),depth(a_{j}))$
\end{enumerate}
\end{lem}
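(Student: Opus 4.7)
The plan is to construct, for each boundary position $t$ along the $a$-side between $i$ and $j$, a monotone sequence $F_t$ in $T\boxslash P$, such that consecutive $F_t, F_{t+1}$ are either bundled or rainbow, and the endpoints $F_i, F_j$ trigger a contradiction via \propref{fan-fan} or \propref{fan-rainbow}. WLOG assume $d := \text{depth}(a_i) \leq \text{depth}(a_j)$; by condition (3), every intermediate $a_t = (x_t, p_t)$ satisfies $x_t \geq d$. Fix a base array $X$ of length $d-1$ in $T$ and define
\[
F_t = (X + * + Y_t, p_t)
\]
where $Y_t$ is an arbitrary fixed array of length $x_t - d$. By \corref{pass-lex-basic}, $F_t$ is monotone with direction $Z(d, x_t, p_t)$, and its length equals $d_{d-1}$.

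For consecutive $F_t, F_{t+1}$, we invoke the appropriate part of \propref{main-related} (vertical, horizontal, or diagonal, matching the $H$-edge between $a_t, a_{t+1}$) to conclude they are related. Then \propref{bundled} gives bundled if $Z(d, x_t, p_t) = Z(d, x_{t+1}, p_{t+1})$, while \propref{rainbow} gives rainbow if the directions disagree. In both cases interleaves propagate, by \corref{half_inter} and \propref{rainbow_inter} respectively, so $F_i$ and $F_j$ remain $k$-interleaving for some $k$ at least $d_{d-1}/2^{j-i} - 2(j-i)$, which is certainly at least $3$ under our growth assumptions on the degree sequence.

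At $t = i$ the sequence $F_i = (X + *, p_i)$ is fanned at $(X, p_i)$ of colour $C(d, p_i, \text{vertical}) = c$ by \propref{pass-colour}. If $\text{depth}(a_j) = d$ then $F_j = (X + *, p_j)$ is also a fan of colour $c$ and \propref{fan-fan} concludes the contradiction. Otherwise, introduce $F_j^- = (X + * + Y_j^-, p_j)$ where $Y_j^-$ drops the last coordinate of $Y_j$; by \propref{main-related}(1), $F_j$ and $F_j^-$ are related via vertical $T\boxslash P$ edges of colour $C(x_j, p_j, \text{vertical}) = c$, and since $(a_j, b_j)$ is a chromatic boundary edge we have $Z(d, x_j, p_j) \neq Z(d, x_j - 1, p_j)$, so \propref{rainbow} identifies $(F_j, F_j^-)$ as a rainbow of colour $c$; then \propref{fan-rainbow} furnishes the contradiction.

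The main obstacle is that the $a$-side of the boundary, while monochromatic at the original grid layer $i_0$, need not remain monochromatic at the target layer $d$, because \corref{chromatic-perserve} only propagates same-colour relations downward in the layer index. Consequently the chain $F_i, F_{i+1}, \ldots, F_j$ cannot be assumed uniformly bundled, and a direct invocation of \lemref{ultra-inter-weak} is unavailable. The argument must instead track the interleave-preservation quantitatively through possibly alternating bundled and rainbow steps, using both \corref{half_inter} and \propref{rainbow_inter}, before concluding with the endpoint-crossing argument.
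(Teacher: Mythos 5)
There is a genuine gap, and it sits exactly where you flag your ``main obstacle''. Your plan handles a possibly non-monochromatic $a$-side at layer $d$ by letting the single chain $F_i,F_{i+1},\dots,F_j$ alternate between bundled and rainbow steps and claiming that interleaves still propagate via \corref{half_inter} and \propref{rainbow_inter}. But \propref{rainbow_inter} is not a statement about consecutive members of one chain: it requires two sequences $A,B$ that already $k$-interleave together with two rainbow partners $C,D$, the two rainbows having the same colour, i.e.\ it propagates an interleave across a \emph{pair} of parallel chains (this is how it is used in \lemref{ultra-inter}, which has two chains $A_1,\dots,A_k$ and $B_1,\dots,B_k$). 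Within a single chain, a rainbow step means $F_t$ and $F_{t+1}$ are totally separated (\propref{rainbow}), so no interleave whatsoever passes from $F_t$ to $F_{t+1}$, and your quantitative claim that $F_i$ and $F_j$ remain at least $d_{d-1}/2^{j-i}-2(j-i)$ interleaving simply does not follow; one rainbow step anywhere in the chain destroys it. There is also an internal inconsistency: to get the endpoint rainbow $(F_j,F_j^-)$ you use that $a_j,b_j$ still have different $Z$-values at layer $d$, which is precisely the upward propagation of colour differences (\corref{chromatic-perserve}(2)) that you deny yourself for the side of the boundary.

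The obstacle itself is illusory, and removing it is the intended content of condition (3). Colour \emph{differences} propagate upward in the layer index by \corref{chromatic-perserve}(2), and since there are only two colours this forces each side of a boundary to stay monochromatic as a chain when you move up: this is exactly \lemref{boundary-preserve}. Condition (3) guarantees that all intermediate $a_t$ are still present on the grid $Z(d,*,*)$ (with $d=\min(depth(a_i),depth(a_j))$), so the boundary segment can be transported to layer $d$, where $Z(d,x_t,p_t)$ is the same for all $i\le t\le j$. Your sequences $F_t$ are then all monotone in the same direction, every consecutive pair is bundled, the endpoint with depth $d$ is a fan of colour $c$ and the other endpoint is a fan or rainbow of colour $c$ (the rainbow direction again coming from the upward-preserved difference across the vertical $H$-edge), and \lemref{ultra-inter-weak} gives the contradiction directly --- no mixed bundled/rainbow bookkeeping is needed or available. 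Apart from this missing step your setup (the choice of $F_t$, the fan at the shallow endpoint, the rainbow $F_j^-$ at the deep endpoint, and the pigeonholing into \lemref{ultra-inter-weak}) matches the paper's argument.
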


\begin{proof}
Let's consider only the $A$-side of the chromatic boundary. Write
this as $a_{i}\sim a_{i+1}\sim\cdots\sim a_{j-1}\sim a_{j}$. Write
each $a_{k}$ as $a_{k}=(d_{k},p_{k})$. Let $s=min(depth(a_{i}),depth(a_{j}))$.
Because chromatic boundaries are preserved as we move from $Z(a,*,*)$
to $Z(b,*,*)$ for $a<b$, we can assume this chromatic boundary is
situated in $Z(s,*,*)$.

For each vertex $a_{k}=(d_{k},p_{k})$, we fix a corresponding sequence
$S_{k}=(A+*+X_{k},p_{k})$ in $T\boxslash P$. The arrays $A$ and
$X_{k}$ are chosen such that $*$ is at position $s$ and the total
length is of $d_{k}$. The actual values of $A$ and $X_{i}$ are
arbitrary as long as they are consistent across various $i$, so we
will just assume they consists of entirely '1's. Note that the assumption
$depth(a_{t})\geq min(depth(a_{i}),depth(a_{j}))$ means that $|X|\ge0$
so the sequence exist for all intermediate $a_{t}$ between $a_{i}$
and $a_{j}$. Note that $S_{k}$ is monotone in the direction of $Z(s,d_{i},p_{i})$.
By definition of the chromatic boundary, we know $Z(s,d_{i},p_{i})=Z(s,d_{i+1},p_{i+1})$.
This means $S_{i}$ is monotone in the same direction with $S_{i+1}$.
Therefore, the chain $S_{i}\sim S_{i+1}\sim\cdots S_{j}$ is a chain
of strong interleaves by \propref{bundled}.

By the second assumption, $b_{i}=(d_{k}-1,p_{k})$. If $depth(a_{i})=s$,
then $S_{i}$ is fanned of colour $c$: $S_{i}=(A+*,p_{i})$ is fanned
to $(A,p_{i})$. By assumption (1), this fan is of colour $c$. If
instead $depth(a_{i})>s$, then $(A+*+X_{i},p_{i})$ is in rainbow
with $(A+*+Y,p_{i})$ of colour $c$ by assumption (1), where $Y$
is $X_{i}$ removing the last element. 

As $s=min(depth(a_{i}),depth(a_{j}))$ , so at least one of $depth(a_{i})=s$
, $depth(a_{j})=s$ holds, so we have at least one fan , and the other
one is in a fan or rainbow. In both cases, all assumptions in \lemref{ultra-inter-weak}
are satisfied. Therefore, this leads to a contradiction by \lemref{ultra-inter-weak},
since all sequences can be taken to be arbitrarily long. 
\end{proof}
\begin{cor}
\label{cor:k-points}There cannot exist a chromatic boundary line
and $s+1$ positions $(a_{t_{1}},b_{t_{1}})$,$(a_{t_{2}},b_{t_{2}})\cdots$,
$(a_{t_{s+1}},b_{t_{s+1}})$ such that 
\begin{enumerate}
\item For each $i$, $(a_{t_{i}},b_{t_{i}})$ are vertical $H$-edge, and
$a_{t_{i}}$ have greater depth.
\item For any $i,j$, when transversing from $a_{t_{i}}$ to $a_{t_{j}}$
along the chromatic boundary, all intermediate $a_{t}$ have $depth(a_{t})\geq min(depth(a_{t_{i}},a_{t_{j}}))$
\end{enumerate}
\end{cor}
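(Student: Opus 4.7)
The plan is to reduce the corollary to Lemma \ref{lem:k-points-sep} via a straightforward pigeonhole argument on the number of colours. Suppose for contradiction that a chromatic boundary line together with $s+1$ positions $(a_{t_{1}}, b_{t_{1}}), \ldots, (a_{t_{s+1}}, b_{t_{s+1}})$ satisfying conditions (1) and (2) does exist. Each $(a_{t_{i}}, b_{t_{i}})$ is an $H$-edge (of vertical type by hypothesis), and each such $H$-edge carries a colour in $[s]$.

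Since there are $s+1$ vertical $H$-edges, but only $s$ available colours, by pigeonhole there exist indices $i < j$ such that $(a_{t_{i}}, b_{t_{i}})$ and $(a_{t_{j}}, b_{t_{j}})$ are $H$-edges of the same colour $c$. I then verify that the pair $(a_{t_{i}}, b_{t_{i}})$, $(a_{t_{j}}, b_{t_{j}})$ satisfies the three hypotheses of Lemma \ref{lem:k-points-sep}: condition (1) of the lemma (equal colour $c$) is provided by pigeonhole; condition (2) (both are vertical $H$-edges with $a_{t_{i}}, a_{t_{j}}$ of greater depth) is immediate from assumption (1) of the corollary; and condition (3) (intermediate $a_{t}$ have depth at least $\min(\mathrm{depth}(a_{t_{i}}), \mathrm{depth}(a_{t_{j}}))$) is precisely the restriction of assumption (2) of the corollary to this particular pair $(i, j)$.

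Thus all hypotheses of Lemma \ref{lem:k-points-sep} are met, and the lemma yields a contradiction, completing the proof. There is essentially no new difficulty here; the only thing to check carefully is that assumption (2) of the corollary (stated for all pairs $i, j$) indeed implies the pair-specific depth bound required by the lemma, which it does tautologically.
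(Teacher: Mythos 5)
Your proposal is correct and is essentially identical to the paper's proof: both apply the pigeonhole principle to the $s+1$ vertical $H$-edges and $s$ colours to find a same-coloured pair, and then invoke Lemma \lemref{k-points-sep} on that pair, whose hypotheses follow directly from conditions (1) and (2) of the corollary.
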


\begin{proof}
Note that every pairs of positions $(a_{t_{i}},b_{t_{i}})$ and $(a_{t_{j}},b_{t_{j}})$
satisfy the conditions from the previous lemma. Since there are $s+1$
positions and $s$ different colours, at least one pair must have
the same colour as an $H$-edge. This contradicts the previous lemma. 
\end{proof}
The points on the first layer $Z(1,1,*)$ do not fall under our definition
of chromatic boundary. So we have to write it separately.
\begin{cor}
\label{cor:k-points-top}We cannot have $s+1$ different values $k_{1},k_{2}\cdots k_{s+1}$
such that $Z(1,1,k_{i})$ all belong to the same monochromatic component
in $Z(1,*,*)$.
\end{cor}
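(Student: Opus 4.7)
The plan is to mirror the argument of \corref{k-points} but using the monochromatic component of $Z(1,*,*)$ in place of a chromatic boundary. First, I would note that for each $i\in\{1,\ldots,s+1\}$ the sequence $(*,k_i)$ of depth-$1$ vertices at path position $k_i$ is fanned at the root $([],k_i)$ via the vertical edges, with common colour $C(1,k_i,\text{vertical})$. Since only $s$ colours are available, pigeonhole produces $i\neq j$ with $C(1,k_i,\text{vertical})=C(1,k_j,\text{vertical})=:c$.

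The main step is to build a chain of bundled sequences connecting these two fans. I would choose a path $(1,k_i)=v_0,v_1,\ldots,v_t=(1,k_j)$ in the hexagonal grid $H$ whose vertices all lie in the common monochromatic component of $Z(1,*,*)$. Writing $v_\ell=(x_\ell,p_\ell)$, attach the sequence
\[
S_\ell \;=\; (*+X_\ell,\,p_\ell),
\]
with $*$ at position $1$ and $X_\ell=[1,\ldots,1]$ of length $x_\ell-1$. Then each $S_\ell$ is monotone in direction $Z(1,x_\ell,p_\ell)$, which is constant along the path, so all the $S_\ell$ share a common direction. For each $\ell$, the $H$-edge joining $v_\ell$ to $v_{\ell+1}$ is vertical, diagonal, or horizontal, and the corresponding clause of \propref{main-related} exhibits $S_\ell$ and $S_{\ell+1}$ as related; \propref{bundled} then promotes this to bundled. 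The endpoints $S_0=(*,k_i)$ and $S_t=(*,k_j)$ are fans at the root, both of colour $c$.

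To conclude, the chain $S_0\sim S_1\sim\cdots\sim S_t$ verifies hypotheses (1), (2), and (3)(a) of \lemref{ultra-inter-weak}, which bounds the common length of the $S_\ell$ (namely $d_0$) by $2^t\cdot 10$; this gives the desired contradiction once $d_0$ is chosen sufficiently large with respect to $n$, $m$, and $s$. I do not anticipate a genuine obstacle: the one mild bookkeeping point is that the tree must be deep enough to host all intermediate sequences, which is automatic since $x_\ell\le n$ and $T$ has depth $n$, and that the choice of filler arrays $X_\ell=[1,\ldots,1]$ matches the form required by each clause of \propref{main-related}, which is straightforward to check case by case.
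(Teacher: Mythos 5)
Your proposal is correct and follows essentially the same route as the paper: pick two fans of equal colour by pigeonhole, convert a path inside the monochromatic component of $Z(1,*,*)$ into a chain of bundled sequences $(*+X_\ell,p_\ell)$ via \propref{main-related} and \propref{bundled}, and conclude with \lemref{ultra-inter-weak}. The bookkeeping points you flag (all-$1$ filler arrays and depth bounds) are exactly how the paper handles it, so no gap.
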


\begin{proof}
Let $S_{i}=(*,k_{i})$, then each of them is fanned to $([],k_{i})$
of some colour. Pick two of them $k_{i}$ and $k_{j}$ such that the
fans have the same colour. Then, there is some $Z(1,1,k_{i})=Z(1,d_{1},p_{1})=Z(1,d_{2},p_{2})\cdots=Z(1,1,k_{j})$,
where each of $(d_{i},p_{i})$ is adjacent to $(d_{i+1},p_{i+1})$,
since they belong to the same monochromatic component. In a similar
way with \lemref{k-points-sep}, we can transform each of these into
a chains of interleaving sequences: for $(d_{i},p_{i})$ we consider
$(*+X,p_{i})$ where $X$ is of length $d_{i}-1$. So we have found
some chain of strong interleaves from $S_{i}$ to $S_{j}$ , this
satisfied the assumptions of \lemref{ultra-inter-weak} so leads to
a contradiction.
\end{proof}

\section{Proof of main theorem \label{sec:main-proof}}

Finally, we are ready to give proof of the main result. We will show
that on a sufficiently large hexagonal grid, we can always either
find a chromatic boundary that satisfy the assumption on \corref{k-points},
or violate \corref{k-points-top}, leading to a contradiction.
\begin{lem}
\label{lem:long-line-bad}There exist a constant $S=S(s)$ such that
if $L$ is a chromatic boundary of length greater than $S$, then
we can find $s+1$ points satisfying the assumptions of \corref{k-points}.
\end{lem}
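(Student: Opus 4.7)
\emph{Plan.} Write the chromatic boundary as $(a_t, b_t)_{t=1}^L$ and set $D_t := \operatorname{depth}(a_t)$. Two basic observations drive the argument. First, because consecutive positions on the boundary share exactly one endpoint (condition (3) of \defref{chromatic-boundary}) and any $H$-edge changes depth by at most $1$, the integer sequence $D_t$ changes by $0$ or $\pm 1$ at each step. Second, call position $t$ \emph{good} when $(a_t, b_t)$ is a vertical $H$-edge with $\operatorname{depth}(a_t) > \operatorname{depth}(b_t)$; this is precisely the condition required at each of the $s+1$ selected positions in \corref{k-points}.

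My plan is to induct on $s$, establishing a recursion of the form $S(s) \le C \cdot S(s-1)$ for an absolute constant $C$. On a boundary of length $L > S(s)$, I would let $m := \min_t D_t$ and let $P_m \subseteq \{1,\dots,L\}$ be the indices achieving this minimum. In the \emph{wide valley} case where at least $s+1$ elements of $P_m$ are good, I select any $s+1$ of them as $t_1 < \cdots < t_{s+1}$: the nesting condition is automatic because every intermediate $a_t$ satisfies $D_t \ge m = \min(D_{t_i}, D_{t_j})$. In the \emph{narrow valley} case where fewer than $s+1$ depth-$m$ positions are good, the $P_m$ indices partition the boundary into sub-intervals inside which $D_t > m$, and by pigeonhole at least one such sub-interval has length $> S(s-1)$. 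Applying the inductive hypothesis inside that sub-interval produces $s$ nested good positions $t_1' < \cdots < t_s'$ whose depths are all $\ge m+1$, and I extend this chain by prepending a good position $t_0$ flanking the sub-interval (either a depth-$m$ good position in $P_m$ adjacent to it, or, failing that, a good position at depth $m+1$ obtained from the boundary having to climb from depth $m$ back to $m+1$ at the sub-interval's endpoint). The nested condition for the combined chain $t_0 < t_1' < \cdots < t_s'$ follows because $D_{t_0}$ is the smallest depth among the chosen indices, and all intermediates in the original boundary between $t_0$ and any $t_i'$ have depth $\ge D_{t_0}$ by construction of the sub-interval.

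\emph{Main obstacle.} The delicate step is the narrow-valley case when no good position sits at depth $m$, for instance because the boundary visits depth $m$ only along diagonal $H$-edges. In this situation we must produce a good position at the adjacent depth $m+1$ to serve as the root $t_0$ of the nested chain, and simultaneously verify that intermediates between $t_0$ and each $t_i'$ have depth $\ge D_{t_0} = m+1$. This requires a local geometric analysis of how the boundary enters and leaves a depth-$m$ valley in the hexagonal grid: the change-by-$1$ property of $D_t$ forces each entry into the valley to be preceded (or followed) by an $H$-edge of a specific type and orientation, and one must show combinatorially that at least one such edge near the entry is vertical with $a$ above, and that it can be incorporated into the chain without disturbing the nested condition for the $t_i'$ already chosen. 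Controlling the ``cost'' of each prepend operation so that the recursion constant $C$ remains absolute (so that $S(s) = O(C^s)$ is indeed the correct order of magnitude) is the main technical content of the lemma.
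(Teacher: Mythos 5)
There is a genuine gap, and it sits exactly where you relegate the difficulty to a ``main obstacle'' paragraph: the problem is not a local one about how the boundary climbs out of a valley, and it cannot be fixed by a local analysis. Your notion of \emph{good} is tied to a fixed choice of which side of the boundary is called $A$. But \corref{k-points} needs all $s+1$ deeper endpoints to lie on the $A$-side, i.e.\ in a \emph{single colour class}, and for a fixed labelling a long boundary may contain no good positions whatsoever. Concretely, colour every hexagon of depth $\le m$ red and every hexagon of depth $\ge m+1$ blue, and let $A$ be the red side: the boundary has arbitrary length, every pair is either a vertical edge with $b_t$ deeper or a diagonal edge, $D_t\equiv m$, and your recursion has nothing to start from (the lemma survives only because one may swap $A$ and $B$). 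More generally, the real obstruction is that a vertical boundary edge at the bottom of a valley may have its deeper endpoint of the \emph{other} colour; whenever that happens the chosen colour class dips one level below the minimum depth there, which is precisely what destroys condition (2) of \corref{k-points}. This is why the paper works in the dual graph, classifies the lowest (``critical'') points of the boundary as red-based or blue-based, and must select $s+1$ critical points of the same base with no opposite-base critical point between them --- established by an induction on the number of pairwise ``good'' critical points combined with a planarity (Jordan-curve) argument in the half-plane. Your proposed resolution (``at least one such edge near the entry is vertical with $a$ above'') is simply false in the stripe example above, so the claimed recursion $S(s)\le C\cdot S(s-1)$ has no proof.

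There are two further problems with the stated recursion even granting a favourable labelling. First, the pigeonhole in the narrow-valley case is unjustified: you only know that \emph{good} depth-$m$ positions number at most $s$, but the boundary may linger at depth $m$ through arbitrarily many non-good positions (horizontal pairs, diagonal pairs, vertical pairs with $b_t$ deeper), so the indices with $D_t>m$ need not contain any sub-interval of length $>S(s-1)$; you would need to bound the total time spent at the minimum depth, which is exactly the kind of quantitative control the paper extracts from its critical-point counting. Second, a minor but real slip: $D_t$ can change by $2$ in one step, since consecutive positions may share $b_t$ with $a_t$ and $a_{t+1}$ the hexagons directly above and below it (e.g.\ $b_t=b_{t+1}=(i,j)$, $a_t=(i-1,j)$, $a_{t+1}=(i+1,j)$); the ``changes by $0$ or $\pm1$'' observation, and the valley-entry analysis built on it, would need to be reworked accordingly.
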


\begin{proof}
We will instead work in the dual graph to find the $s+1$ points.
Recall that the dual graph are given labels $(i,j,+)$ or $(i,j,-)$
(\defref{dual-graph-label}). We will try to consider the top-most
points of the boundary $L$ interpreted as a continuous curve on $\bar{H}'$.
Symbolically, we take the dual graph vertex of $L$ with the lowest
depth, meaning we consider the first vertices in the order $(1,*,-)<(1,*,+)<(2,*,-)<(2,*,+)<\cdots$
that are present in $L$. Both $(i,j,+)$ and $(i,j,-)$ are said
to have depth $i$, but we priorities $(i,j,-)$ over $(i,j,+)$ when
picking points of lowest depth. If a point of lowest depth is $(i,j,+)$,
notice that two of its three neighbors $(i,j,-),(i,j+1,-)$ have even
lower depth, so this can only happen at the two endpoints of $L$.
By shrinking $L$ on both ends by upto $1$ , we may \foreignlanguage{british}{assume}
the point with lowest depth are of form $(i,j,-)$. Let $i_{0}$ be
the lowest depth of $L$, we say all points on $L$ of form $(i_{0},j,-)$
are $L$\emph{-critical} points. Clearly, any boundary $L$ admits
at least one critical point. 

For a point $(i,j,-)$, exactly two \foreignlanguage{british}{neighbours}
of $(i,j,-)$ have higher depth: $(i,j,+)$ and $(i,j-1,+)$. If a
critical point occur does not occur at endpoints of $L$, then the
two edge connected to it must be $(i,j,+)$ and $(i,j-1,+)$ in some
order. This in turn, means that the point $(i,j,-)$ is the meeting
of the three hexagons $(i,j),(i-1,j),(i-1,j+1)$, such that $(i-1,j)$
and $(i-1,j+1)$ have the same colour, and $(i,j)$ is coloured differently
from them. In the proof of this lemma we denote the two colours as
red, blue instead of $\{INC,DEC\}$. A critical point $(i,j,-)$ is
\emph{red-based}\textbf{,} if $(i,j)$ have the colour red. It is
\emph{blue-based} if $(i,j)$ have colour blue. 

Let's call a pair of critical points $X,Y$ (not necessarily with
respect to the same chromatic boundary $L$) \emph{good}, if they
are of different base colour, or
\begin{itemize}
\item If $X$ is critical with respect to some $L$, $Y$ belongs to $L$,
all critical points between $X$ and $Y$ along $L$ have the same
base as $X$
\item The same condition with $X,Y$ swapped.
\end{itemize}
Suppose we picked out $2s+1$ pairwise good points. Then, we pick
some $s+1$ of them with the same colour base, say blue based. For
each critical point $(i,j,+)$ we will pick the $H-$edge $(a,b)$
where $a=(i,j)$, $b=(i-1,j)$ for \corref{k-points}. We claim that
pairs being good imply the second assumption of \corref{k-points}
is satisfied.

Let $X,Y$ be two chosen critical points. Assume the first condition
of being good holds: that $X$ is critical to some $L$ where $Y$
belongs to $L$, and that all critical points between $X$ and $Y$
are blue based. Let $X=(i,j)$ , $Y=(i',j')$, so we picked boundaries
corresponding to $X$: $a=(i,j),b=(i-1,j+1)$ , and for $Y$: $a'=(i',j'),b'=(i'-1,j'+1)$
. Since $X$ is critical, we know $i\leq i'$, and so we only need
to show all $a_{t}$ between $a$ and $a'$ satisfy $i\leq depth(a_{t})$.
Since $X$ is critical, all vertex $v$ on $L$ of the dual graph
$\bar{H}'$ satisfy $depth(v)\geq i$. However, if a point $v\in\bar{H}$
is of depth $d$ is blue-based, then the corresponding $a_{t}$ will
have depth $d$ , whereas if it is red-based, then the corresponding
$a_{t}$ will have depth $d-1$. Therefore, for all $a_{t}$ to satisfy
$i\leq depth(a_{t})$ , we need all intermediate $v$ that have $depth(v)=i$
(i.e. $L$-critical points) to be blue-based. This is exactly the
condition assumed in the definition of good pairs. The situation is
shown on \figref{critical-points}.

Therefore, we have found $s+1$ positions on $L$ that satisfy \corref{k-points},
so we obtain a contradiction. It remains to show that we can pick
out $2s+1$ pairwise good critical points. 

We proceed by induction. Assume there is some $C_{k}$ such that any
chromatic boundary line with length at least $C_{k}$ we must be able
to find $k$ pairwise good critical points. For $k=1$ we pick any
critical point. 

Suppose there is a subsegment $L'$ of $L$ of length at least $C_{k}$
without any $L$-critical points. We pick out a set $S$ of $k$ pairwise
good critical points from $L'$. Let $v$ be the next immediate critical
point along $L$, to the either side of $L'$. Then $S+v$ is a pairwise
good collection of $k+1$ critical points. This is since $v$ is immediate,
there is no $L-$critical points between $v$ and any $w\in S$ along
$L$ . Therefore, every subsegments of length $C_{k}$ must contain
a critical point. Moreover, there cannot be $k+1$ consecutive critical
points of the same colour base on $L$ , since such $k+1$ points
are clearly pairwise good. From the two observations, from a critical
point we can find a critical point of opposite colour base by transversing
a distance of no more than $(k+1)C_{k}$ along $L$.

Let $u=(k+1)(2C_{k}+5)$. If $L$ is at least of length $u(k+1)C_{k}$
, then we can find critical points $A_{1},B_{1},A_{2}\cdots A_{u},B_{u}$
in this order along $L$, such that each $A_{i}$ is red based and
each $B_{i}$ is blue based, and that the next blue based critical
point after $A_{i}$ is $B_{i}$, and the next red based critical
point after $B_{i}$ is $A_{i+1}$. Now we consider the geometry of
$L$. Let $i_{0}$ denotes lowest depth. Then the entirety of $L$
is contained inside the half plane $P$ where depth $\geq i_{0}$.
Consider the continuous path $\gamma$ connecting $A_{\lceil\frac{u}{2}\rceil}$
and $B_{\lceil\frac{u}{2}\rceil}$, this divides the half plane $P$
into inside, and outside (see \figref{critical-curve} for the situation
and extra explanations). Therefore, either $A_{1}B_{1}\cdots B_{\lceil\frac{u}{2}\rceil}$
or $A_{\lceil\frac{u}{2}\rceil}B_{C_{k}+1}\cdots B_{2C_{k}+1}$ are
contained inside $\gamma.$ Regardless of which way it is, we can
infer that $A_{\lceil\frac{u}{2}\rceil}$ and $B_{\lceil\frac{u}{2}\rceil}$
are at least $\lceil\frac{u}{2}\rceil$ columns apart. This means
the length of path between $A_{\lceil\frac{u}{2}\rceil}$ and $B_{\lceil\frac{u}{2}\rceil}$
is at least $\lceil\frac{u}{2}\rceil$. However, the length of this
path cannot exceed $(k+1)C_{k}$ , since there cannot be any blue
based points between $A_{\lceil\frac{u}{2}\rceil}$, $B_{\lceil\frac{u}{2}\rceil}$,
cannot be more than $k$ red based critical points, and any subsegment
of length $C_{k}$ must have a critical point. As we have picked $\frac{u}{2}>(k+1)C_{k}$,
this is a contradiction. So $C_{k+1}=u(k+1)C_{k}$ completes the induction. 
\end{proof}
\begin{figure}
\centering{}\includegraphics[scale=0.5]{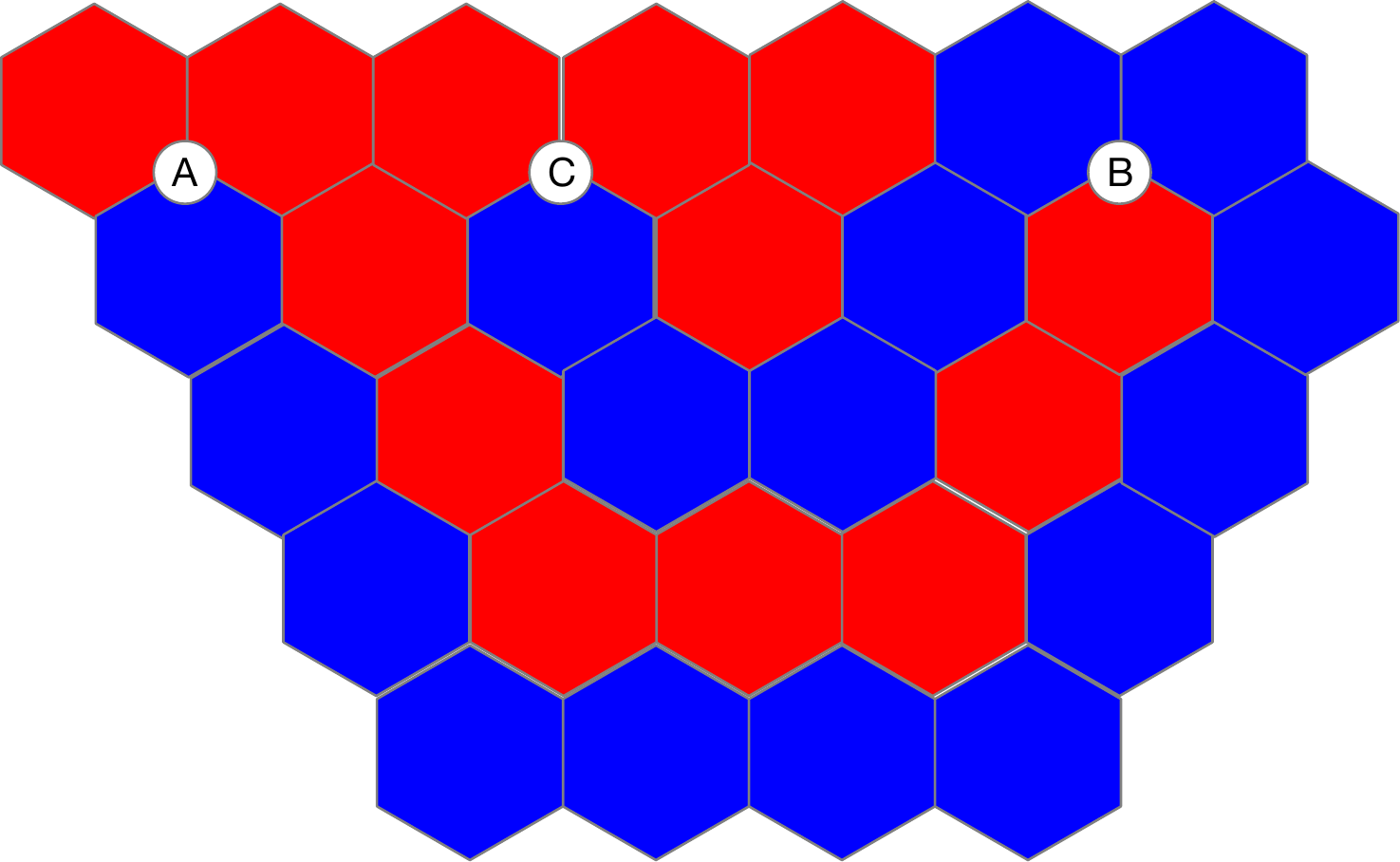}
\caption{{\small{}\label{fig:critical-points}Consider the segment of chromatic
boundary from point $A$ to point $C$. $A,B,C$ are all critical
points, $A$,$C$ are blue based and $B$ is red based. Since a critical
point of opposite base is present as we travel from $A$ to $C$,
we would have visited a blue hexagon of depth lower than $min(depth(A),depth(C))$,
so taking the two blue hexagons near point $A,C$ does not satisfy
the hypothesis of \corref{k-points}.}}
\end{figure}

\begin{figure}
\centering{}\includegraphics[scale=0.5]{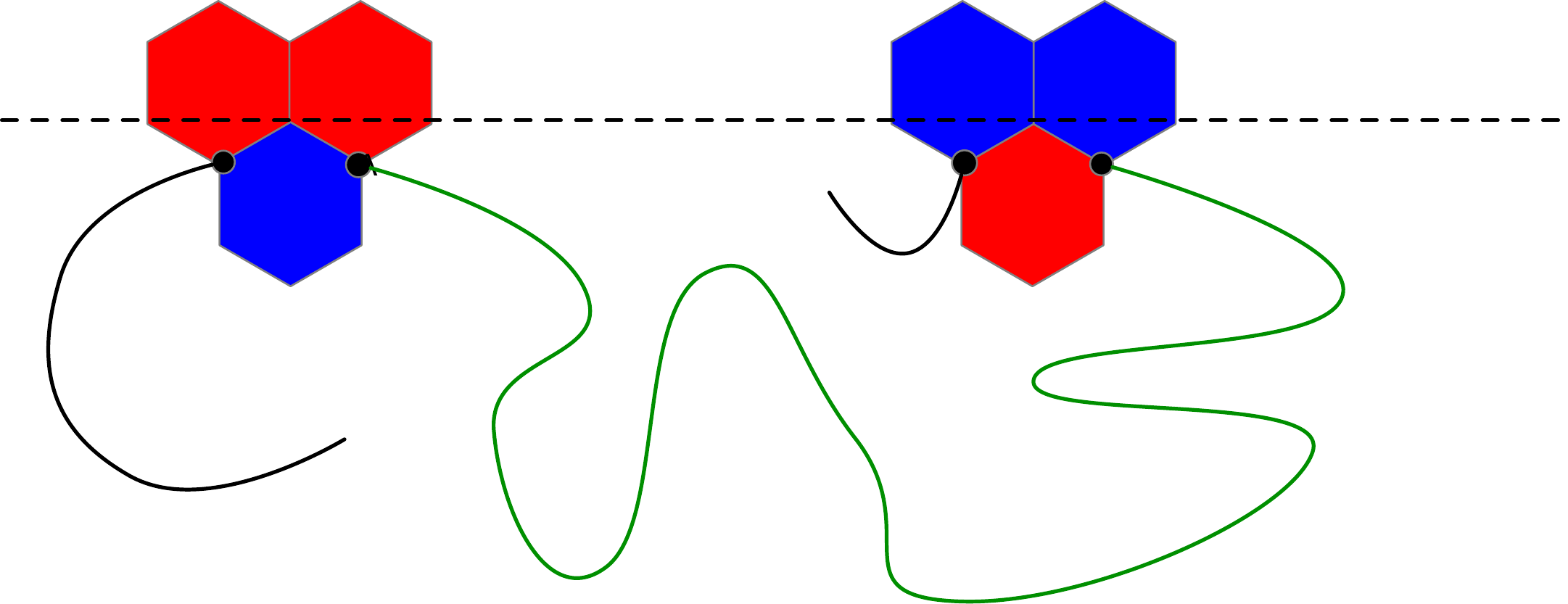}\caption{{\small{}\label{fig:critical-curve} If a boundary travels from a
critical point to a critical point of opposite base (the green curve),
it must divide the half-plane into two. Since the two points are assumed
critical, the boundary cannot travel to any point above the dotted
line. Since the colour blue must alway remain on the left or remain
on the right, the green curve must connect the critical points on
the same left or right side. }}
\end{figure}

\begin{lem}
\label{lem:top-or-long}For any sufficiently large hexagonal grid,
in any of its two colourings $\chi$ we can find either
\begin{itemize}
\item Some $s+1$ points $(1,k_{1}),(1,k_{2})\cdots(1,k_{s+1})$ that belong
to the same monochromatic component of $\chi$, Or,
\item A long chromatic boundary of length at least $S$ (where the constant
$S$ is from \lemref{long-line-bad})
\end{itemize}
\end{lem}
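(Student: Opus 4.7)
The plan is to argue by contradiction: suppose no monochromatic component of $\chi$ contains $s+1$ top-row cells and every maximal chromatic boundary line in $\bar{H}'$ has length strictly less than $S$, and show this fails once the grid dimensions $n,m$ are large enough in terms of $S$. First I analyze the top row: since any two adjacent same-colour top-row cells are connected by an $H$-edge and hence lie in the same monochromatic component, the failure of the first conclusion forces every maximal monochromatic run on the top row to have length at most $s$. Consequently the top row contains at least $m/s$ such runs and at least $m/s - 1$ colour changes, each of which is a top-boundary edge of $\bar{H}'$.

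Next I use the structural remark preceding \lemref{hex-lemma}: every interior vertex of $\bar{H}$ has degree $0$ or $2$ in $\bar{H}'$, and every outer-boundary vertex has degree $0$ or $1$, so $\bar{H}'$ decomposes into disjoint maximal paths and cycles, each of length less than $S$ by the second assumption. Choosing $n > S$ forbids any maximal path from reaching from the top boundary to the bottom. Hence every maximal path meeting the top boundary either returns to the top (producing a non-crossing arc of horizontal span less than $S$) or ends on one of the two sides within depth less than $S$; the latter accounts for only $O(S)$ paths, so for $m \gg sS$ at least $\Omega(m/s)$ top-row colour changes are absorbed by non-crossing top-to-top arcs of span $<S$.

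The main obstacle will be converting this combinatorial picture into an explicit contradiction. My plan is a double-counting argument based on an isoperimetric estimate in the hex grid: since every maximal path or cycle in $\bar{H}'$ has length $<S$, each monochromatic component has outer boundary of length $<S$ and thus area at most $O(S^{2})$. This forces each of the $m$ columns to split into at least $n/O(S^{2})$ distinct components, contributing $\Omega(mn/S^{2})$ vertical edges to $\bar{H}'$. On the other hand, all edges of $\bar{H}'$ lie in paths and cycles of length $<S$: paths contribute at most $O((m+n)S)$ edges (since there are at most $O(m+n)$ boundary endpoints), and the cycle contribution must be controlled using the planar embedding of $\bar{H}'$ together with the isoperimetric bound for each individual cycle. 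Taking $n,m$ polynomially large in $S$ then makes the two estimates incompatible, giving the desired contradiction. The hardest step will be bounding the cycle contribution to the total edge count while respecting the nested planar structure of $\bar{H}'$; everything else is a matter of careful Ramsey-style unpacking of the structure already built in the paper.
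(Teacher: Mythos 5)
Your setup (cut points at least every $s+1$ columns on the top row, decomposition of $\bar{H}'$ into paths and cycles of length less than $S$, and the observation that a short path meeting the top must return to the top except near the sides) matches the paper's, but the step you yourself flag as the ``main obstacle'' contains a genuine gap, and it is not the cycle-counting detail you anticipate. Your double-counting rests on the claim that every monochromatic component has outer boundary of length less than $S$ and hence area $O(S^{2})$. That is only true for components that do not touch the border of the grid, where the outer boundary really is a single cycle of $\bar{H}'$. A component that reaches the bottom, left or right border (or the top) has a boundary consisting of several short $\bar{H}'$-paths together with segments of the grid border, and no area bound follows: the contradiction hypotheses constrain only the top row, so nothing you have assumed rules out, a priori, a single ``sea'' component occupying almost the whole grid with all of its $\bar{H}'$-boundary confined to short arcs near the borders. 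Consequently the claimed $\Omega(n/S^{2})$ components per column, and with it the $\Omega(mn/S^{2})$ lower bound on $|E(\bar{H}')|$, is unjustified, and the two sides of your double count never collide.

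The actual content of the lemma is a connectivity statement about the top row: each short boundary arc that leaves a top cut point and returns to the top connects, along its outside, the top cells just beyond its two endpoints into one monochromatic component. The paper exploits exactly this, ordering the cut points, showing that boundaries started at central cut points must end at cut points (as you also note), passing to maximal non-crossing arcs $B(x,y)$, and then chaining $s+1$ consecutive maximal arcs so that $(1,x_{1}),(1,x_{2}),\dots,(1,x_{s+1})$ all lie in the same component, contradicting the first assumption. Your outline never engages with this chaining mechanism, and repairing the isoperimetric count would essentially force you to reconstruct it, since the only global structure available from ``all boundaries short'' is precisely the pattern of short top-to-top arcs over a possibly huge background component. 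As it stands, the proposal does not yield the contradiction.
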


\begin{proof}
We prove this by contradiction and assume we cannot find a chromatic
boundary line of length at least $S$, and that there are no $s+1$
points of form $(1,i)$ that belongs to the same monochromatic component.

Let's call points $(1,x)$ such that $\chi(1,x)\neq\chi(1,x+1)$ cut
point. If $\chi(1,x)=\chi(1,x+1)=\cdots=\chi(1,x+s)$, then these
clearly forms $s+1$ points for the first condition to be satisfied.
So we can assume there is at least one cut point every $s+1$ columns.
Recall the property of hexagonal board that if $\chi(1,x)\neq\chi(1,x+1)$,
then there must be a chromatic boundary line starting from this point
$(1,x,-)$, and such a chromatic boundary line must also end at some
vertex on some boundary.

In the argument that follows, we will start from an arbitrary cut
point and move around the hexagonal board. We will not move more than
approximately $M=(s+2)S$ distance away from this point. Let's assume
the hexagonal board is of width at least $2M+2S$ , and of depth least
$S$. We will also pick the starting cut point to be at the middle,
i.e. $(1,x)$ for $x$ about $M+S$. Under these assumptions, a cut
point that starts from an initial point must also end in a cut point,
for it is too far away from the other three sides, so ending at the
other sides would imply the existence of a length $S$ boundary. 

For a boundary that starts from cut point $(1,x,-)$ and ends at cut
point $(1,y,-)$, we write this boundary as $B(x,y)$ . We always
assume $x<y$ in the notation of $B(x,y)$. Clearly, since chromatic
boundary line is a continuous curve on the dual graph $\bar{H}'$,
boundary lines cannot cross. That is, there cannot exist $B(x,y)$
and $B(x',y')$ such that $x<x'<y<y'$. We say boundary $B(x,y)$
contains $B(x',y')$ if $x<x'<y'<y$. A boundary line $B(x,y)$ is
maximal, if it is not contained by any boundary line. Note that every
boundary line is either itself maximal, or is contained by a maximal
boundary. This is since we can keep replacing the boundary line by
any that contains it, but the length of a boundary line cannot exceed
$S$ so this process must stop. By assumption, all maximal boundary
lines have length at most $S$. 
\begin{claim*}
Let $B(x,y)$ be maximal boundary. Let $a$ be the smallest value
such that $a>y$ and $(1,a)$ is a cut point, then if $B(a,b)$ is
the boundary that $a$ belongs to , we must have $a<b$ and $B(a,b)$
maximal. 
\end{claim*}
\begin{proof}
Let the boundary that starts at $a$ ends at cut point $(1,b)$. There
are a few choices for value $b$, either $b<x$ , $x<b<y$, $y<b<a$
or $a<b$. If $b<x$, then $B(b,a)$ contains $B(x,y)$, contradicting
maximality of $B(x,y)$. If $x<b<y$, then $B(b,a)$ crosses with
$B(a,b)$. If $y<b<a$ , then this contradicts the choice of $a$.
Therefore, $a<b$. 

To see that $B(a,b)$ is maximal, let's assume it is contained by
$B(a',b')$, then either $a'<x$, $x<a'<y$ or $y<a'<a$. Similar
to the above, the first case would have $B(a',b')$ contain $B(x,y)$,
the second case would have $B(a',b')$ cross with $B(x,y)$ , the
third case would contradicts choice of $a$. So $B(a,b)$ must be
maximal.
\end{proof}
To complete the proof, we start with a cut point $C$ and find a nearby
maximal boundary. Suppose this cut point belong to boundary $B(x,y)$,
we find the maximal boundary containing $B(x,y)$ and define this
as $B(x_{1},y_{1})$. Note that we have not moved away from $C$ by
a distance of more than $S$. Now, apply the claim $s$ times, we
can find a chain of maximal boundaries $B(x_{1},y_{1})$, $B(x_{2},y_{2})\cdots B(x_{s+1},y_{s+1})$,
with the additional property that the next cut point after $y_{i}$
is $x_{i+1}$. For each $B(x_{i},y_{i})$, observe that we must have
$\chi(1,x_{i})=\chi(1,y_{i}+1)$, and in the same monochromatic component.
This is due to geometry, in that the colour $\chi(1,x_{i})$ starts
on the right of the boundary, so must end at the right of the boundary,
which is $\chi(1,y_{i}+1)$. By traveling along the boundary line
$B(x_{i},y_{i})$, we can see that $(1,x_{i})$ and $(1,y_{i}+1)$
belongs to the same monochromatic component. Moreover, since $y_{i}$
is the next cut point after $x_{i+1}$, we have $\chi(1,x_{i+1})=\chi(1,y_{i}+1)$
and they are in the same monochromatic component. In conclusion, $\chi(1,x_{1})=\chi(1,x_{2})\cdots=\chi(1,x_{s+1})$,
and they are all in the same monochromatic component. Therefore, we
have found $s+1$ points as required.
\begin{figure}[h]
\centering{}\includegraphics{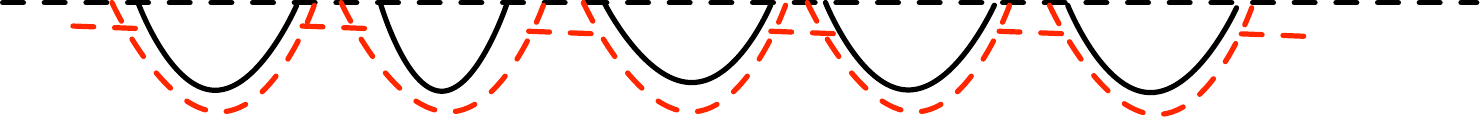}\caption{Picking out $s+1$ maximal boundaries. The dotted red line showed
the side forms a monochromatic component}
\end{figure}
\end{proof}
The proof of the main theorem is now complete 
\begin{proof}
Given a $s$-stack layout of $T\boxslash P$ , start by performing
the three passes to subtree as described in \secref{ramsey-passes}.
We assume $T\boxslash P$ is large enough to begin with such that
\ref{lem:top-or-long} can be applied after all the subtree passes.
We have obtained two colours on multiple hexagonal grid $Z(i,j,p)$.
Apply \ref{lem:top-or-long} to the hexagonal board $Z(1,*,*)$. If
we find a chromatic boundary line of length at least $S$, then contradiction
follows from \ref{lem:long-line-bad}. If we have found $s+1$ points
belonging to the same monochromatic component, contradiction follows
from \ref{cor:k-points-top}. Either way, we have reached a contradiction
and the proof is complete.
\end{proof}

\section{$T\boxslash P$ has queue number at most $3$ \label{sec:queue-3}}

It is not difficult to show $T\boxslash P$ has queue number at most
$3$. The three types of edges can be put into their own queue. We
shall write this out in our current notations. However, it is not
clear if $T\boxslash P$ has a $2$-queue layout. We conjecture the
answer to be ``NO'' if $T\boxslash P$ is sufficiently large.
\begin{prop}
$T\boxslash P$ has queue number at most 3
\end{prop}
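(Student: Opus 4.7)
The plan is to exhibit an explicit $3$-queue layout $(L, \phi)$ by letting $\phi$ assign each edge the colour of its type (vertical, horizontal, or diagonal) and letting $L$ be a BFS-style lexicographic order rooted at the corner vertex $(\operatorname{root}, 1)$. The work is then to verify that no two same-type edges nest in $L$.

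Concretely, I would order $V(T \boxslash P)$ lexicographically by the triple $(\operatorname{depth}(v) + i,\, i,\, v)$, where in the third coordinate $v$ is compared via a fixed BFS order $\prec$ on $V(T)$ satisfying the compatibility property: whenever $u \prec u'$ lie at the same depth, every child of $u$ precedes every child of $u'$ in $\prec$. Writing $\ell(v, i) := \operatorname{depth}(v) + i$ for the BFS layer from $(\operatorname{root}, 1)$, a vertical edge $(v, i) \sim (p, i)$ and a horizontal edge $(v, i) \sim (v, i+1)$ each run between consecutive layers $\ell$ and $\ell + 1$, whereas a diagonal edge $(v, i) \sim (p, i+1)$ satisfies $\operatorname{depth}(v) + i = \operatorname{depth}(p) + (i + 1)$ and so stays inside a single layer.

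The bulk of the proof is a case analysis, one colour class at a time. For the two inter-layer colour classes I would split on whether the two edges share a layer-pair and, within a shared pair, on whether their secondary keys $i$ agree; in every sub-case the two edges are forced to be either separated or crossing in $L$. For the diagonal class I would similarly split on whether two diagonals share a secondary key $i$, and if not, on whether their keys differ by exactly one. The compatibility property of $\prec$ is what rules out nested configurations in every sub-case that mixes tree order with path order.

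The main obstacle I expect is the chained-diagonal case: edges $(v, i) \sim (p, i+1)$ and $(v', i+1) \sim (p', i+2)$ sitting in the same layer, where $p$ and $v'$ both occupy secondary key $i+1$ and share the depth $\ell - i - 1$. The resolution is that both are tree-vertices at that common depth and hence totally ordered by $\prec$; whichever precedes the other, the four endpoints occupy secondary-key positions $i, i+1, i+1, i+2$ in an arrangement that yields either a separation or a crossing, but never a nesting. Once this case is dispatched, every remaining case is immediate, and we conclude that each of the three colour classes is nesting-free, so $T \boxslash P$ admits a $3$-queue layout.
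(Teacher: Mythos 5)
Your layout is correct, and I checked the cases you leave implicit: with the primary key $\operatorname{depth}(v)+i$, the vertical and horizontal classes only need the FIFO check between consecutive layers (where your parent-compatible BFS order $\prec$ guarantees that if $(p,i)<(q,j)$ in layer $\ell$ then the corresponding children/successors compare the same way in layer $\ell+1$), and for the intra-layer diagonal class the case $j=i$ needs compatibility of $\prec$ (otherwise $v\prec w$ with $q\prec p$ would produce a genuine nest), while your ``chained'' case $j=i+1$ is, as you say, automatically safe because the outermost endpoints carry secondary keys $i$ and $i+2$. This is, however, a different route from the paper's: the paper keeps the same colouring by edge type but orders vertices path-position first, then depth, then lexicographically on the tree index, under which each of the three edge maps $(v,i)\mapsto(p,i)$, $(v,i)\mapsto(v,i+1)$, $(v,i)\mapsto(p,i+1)$ is order-preserving, so the no-nesting claim reduces to a three-line first-in-first-out verification with no auxiliary hypothesis on the tree order. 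Your anti-diagonal (BFS-layer) order buys a layout in which every edge joins consecutive layers or stays within one layer, at the cost of needing the sibling-compatibility property of $\prec$ and a longer case analysis; the paper's order buys an essentially computation-free proof. Both are complete and valid, so the only thing I would ask of you is to actually write out the inter-layer sub-cases you summarise in one sentence, since that is where the compatibility property silently does its work.
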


\begin{proof}
We will first define the total order imposed on vertices of $T\boxslash P$.
The total order compares two elements as follows: For two elements
$(A,i)$, $(B,j)$ :
\begin{itemize}
\item If $i\neq j$ , $(A,i)<(B,j)$ iff $i<j$. Else, 
\item If $|A|\neq|B|$, $(A,i)<(B,j)$ iff $|A|<|B|$. Else,
\item $(A,i)\leq(B,j)$ iff $A$ is lexicographically smaller than $B$.
\end{itemize}
Since we only need to compare $A$ and $B$ lexicographically when
$|A|=|B|$ , the third condition is equivalent to $A_{d}<B_{d}$,
where $d$ is the first position where $A_{d}\neq B_{d}$.

Now, we will use one queue for each of the three types of edges in
definition of $T\boxslash P$. We claim that no edges of the same
type nests.

To show that no edges nests, it suffices to show that either 
\begin{enumerate}
\item For any edges $(a,b)$ and $(c,d)$ where $a<b$ , $c<d$ and assume
in addition $a<c$, then we must have $b<d$. or 
\item For any edges $(a,b)$ and $(c,d)$ where $a<b$ , $c<d$ and assume
in addition $b<d$, then we must have $a<c$. 
\end{enumerate}
This is exactly the concept of ``first in first out'' about queues
as a data structure. To see this is sufficient, given any pairs of
edges $(a,b)$ and $(c,d)$, without loss of generality we may assume
$a<b$, $c<d$ and $a<c$. In this case, the only possible order of
the four vertices are $a<b<c<d$ , $a<c<b<d$, $a<c<d<b$. Only the
third option is nesting. whereas $b<d$ means the order must be the
first two types. The second case is analogous.

\subsubsection*{Vertical Edges}

Suppose $(v,i)<(w,j)$ , we need to show that $(v+[a],i)<(w+[b],j)$
for any integers $a,b$ . We consider the condition that results in
$(v,i)<(w,j)$:

If $i<j$ , then we have $(v+[a],i)<(w+[b],j)$ directly.

If instead $i=j$, now if $|v|<|w|$ , then $|v+[a]|<|w+[b]|$, so,
and so $(v+[a],i)<(w+[b],j)$,

If instead $|v|=|w|$, we must have $v<_{lex}w$ , so $(v+[a])$ is
lexicographically smaller than $(w+[b])$, so$(v+[a],i)<(w+[b],j)$
.

\subsubsection*{Horizontal Edges}

Suppose $(v,i)<(w,j)$ , we need to show $(v,i+1)<(w,j+1)$.

If $i<j$, then $i+1<j+1$ , so $(v,i+1)<(w,j+1)$.

If instead $i=j$, then either $|v|<|w|$ or $v<_{lex}w$. In both
cases, it is easy to see that $(v,i+1)<(w,j+1)$.

\subsubsection*{Diagonal Edges}

Suppose $(v,i)<(w,j)$ , we need to show $(v+[a],i+1)<(w+[b],j+1)$

If $i<j$, then $i+1<j+1$ so $(v+[a],i+1)<(w+[b],j+1)$ follows.

If instead $i=j$, then $i+1=j+1$. In this case, the same argument
from vertical edges apply.

Therefore, we have checked directly that three queues are enough for
graph $T\boxslash P$.
\end{proof}

\section{Open Problems}

From \cite{key-first-4}, we know that stack number is bounded by
queue number. The other question is still open: 
\begin{problem}
Is queue number bounded by stack number?
\end{problem}

It is known that graphs with queue number $1$ have a $2$-stack layout
(\cite{1-queue}). We have shown that graphs with queue number $3$
may have unbounded stack number. This leaves the case of queue number
$2$ open.
\begin{problem}
Do graphs with queue number $2$ have unbounded stack number?
\end{problem}

\end{document}